\numberwithin{equation}{section}
\newtheorem{theorem}{Theorem}[section]
\newtheorem{corollary}[theorem]{Corollary}
\newtheorem{lemma}[theorem]{Lemma}
\newtheorem{claim}[theorem]{Claim}
\theoremstyle{definition}
\newtheorem{definition}[theorem]{Definition}
\newtheorem{remark}[theorem]{Remark}
\DeclareMathOperator{\SP}{SP}
\DeclareMathOperator{\Span}{Span}
\DeclareMathOperator{\Div}{Div}
\DeclareMathOperator{\Supp}{Supp}
\DeclareMathOperator{\gon}{gon}
\DeclareMathOperator{\covgon}{cov.gon}
\DeclareMathOperator{\conngon}{conn.gon}
\DeclareMathOperator{\mult}{mult}
\DeclareMathOperator{\Pic}{Pic}
\DeclareMathOperator{\Hilb}{Hilb}
\def\bP{{\mathbb P}}
\def\cE{{\mathcal E}}
\begin{document}

\title[Moving curves of least gonality on symmetric products of curves]{Moving curves of least gonality on symmetric products of curves}

\author{Francesco Bastianelli}
\address{Francesco Bastianelli, Dipartimento di Matematica, Universit\`{a} degli Studi di Bari Aldo Moro, Via Edoardo Orabona 4, 70125 Bari -- Italy}
\email{francesco.bastianelli@uniba.it}

\author{Nicola Picoco}
\address{Nicola Picoco, Dipartimento di Matematica, Universit\`{a} degli Studi di Bari Aldo Moro, Via Edoardo Orabona 4, 70125 Bari -- Italy}
\email{nicola.picoco@gmail.com}

\begin{abstract}
Let $C$ be a smooth complex projective curve of genus $g$ and let $C^{(k)}$ be its $k$-fold symmetric product.
The covering gonality of $C^{(k)}$ is the least gonality of an irreducible curve $E\subset C^{(k)}$ passing through a general point of $C^{(k)}$.
It follows from previous works of the authors that if $2\leq k\leq 4$ and $g\geq k+4$, the covering gonality of $C^{(k)}$ equals the gonality of $C$.
In this paper, we prove that under mild assumptions of generality on $C$, the only curves $E\subset C^{(k)}$ computing the covering gonality of $C^{(k)}$ are copies of $C$ of the form $C+p$, for some point $p\in C^{(k-1)}$. 
As a byproduct, we deduce that the connecting gonality of $C^{(k)}$---i.e. the least gonality of an irreducible curve $E\subset C^{(k)}$ connecting two general points of $C^{(k)}$---is strictly larger than the covering gonality.
\end{abstract}

\thanks{The authors are members of INdAM (GNSAGA)} 

\maketitle

\section{Introduction}\label{section:intro}

In this paper, which is a sequel of \cite{BP1}, we study families of irreducible curves covering the $k$-fold symmetric product $C^{(k)}$ of a smooth complex projective curve $C$.
In particular, we are aimed at characterizing families of curves which compute the covering gonality of $C^{(k)}$.

\smallskip
We recall that the \emph{gonality} $\gon(E)$ of an irreducible complex projective curve $E$ is the least integer $\delta$ such that there exists a non-constant morphism $\widetilde{E}\longrightarrow \mathbb{P}^1$ of degree $\delta$, where $\widetilde{E}$ is the normalization of $E$.
Given an irreducible complex projective variety $X$, the \emph{covering gonality} of $X$ is the positive integer
\begin{displaymath}
\covgon(X):=\min\left\{d\in \mathbb{N}\left|
\begin{array}{l}
\text{Given a general point }x\in X,\,\exists\text{ an irreducible}\\ \text{curve } E\subseteq X  \text{ such that }x\in E \text{ and }\gon(E)=d
\end{array}\right.\right\},
\end{displaymath}
i.e. it is the least integer $d$ such that there exists a family $\mathcal{E}\stackrel{\pi}{\longrightarrow}T$ of irreducible curves covering an open subset of $X$, where for general $t\in T$, the curve $E_t:=\pi^{-1}(t)\subset X$ has gonality $\gon(E_t)=d$. 

The covering gonality is an important birational invariant which, together with other measures of irrationality, has been widely studied in the last decade (see e.g. \cite{B1,BDELU,BCFS,GK,Mar,V}).
In particular, as $\covgon(X)=1$ if and only if $X$ is covered by rational curves, we can think of the covering gonality of $X$ as a measure of the failure of $X$ to be uniruled.

A natural problem concerning the covering gonality is classifying families of curves $\mathcal{E}\stackrel{\pi}{\longrightarrow}T$ as above which compute the covering gonality of the variety $X$.
In this direction, \cite{LP} investigates the covering gonality of smooth surfaces $S\subset \mathbb{P}^3$ of degree $d\geq 5$, proving that $\covgon(S)=d-2$ and characterizing families of irreducible curves on $S$ whose general member has gonality exactly $d-2$ (cf. \cite[Corollaries 1.7 and 1.8]{LP}).
The importance of studying the gonality of moving curves is also highlighted by its applications, as e.g. to the study of Seshadri constants and the ample cone on projective surfaces (cf. \cite{B2,KSS}). 

\smallskip
In the following, we are interested in classifying families of curves computing $\covgon(X)$, when $X$ is the $k$-fold symmetric products of a smooth complex projective curve $C$.
Given a positive integer $k$, we recall that the $k$-fold symmetric product of $C$ is the smooth projective variety $C^{(k)}$ parameterizing effective divisors of degree $k$ on $C$, so that the points $P\in C^{(k)}$ are unordered $k$-tuples $p_1+\dots+p_k$ of points $p_j\in C$.

As far as the covering gonality is concerned, it follows from \cite[Theorem 1.6]{B1} and \cite[Theorem 1.1]{BP1} that if $2\leq k\leq 4$ and $C$ has genus $g\geq k+4$, then 
\begin{equation}\label{eq:covgon}
\covgon\big(C^{(k)}\big)=\gon(C).
\end{equation}
It is also worth noticing that $C^{(k)}$ is covered by copies of $C$ of the form 
\begin{equation}\label{eq:C_Q}
C_Q:=\left\{\left.Q+p\in C^{(k)}\right|p\in C\right\},
\end{equation} 
where $Q=p_1+\dots+p_{k-1}\in C^{(k-1)}$ is a fixed point. 
In particular, the family $\mathcal{C}\stackrel{\pi}{\longrightarrow} C^{(k-1)}$ with $C_Q=\pi^{-1}(Q)$ computes the covering gonality of $C^{(k)}$.

\smallskip
We prove that if $C$ is sufficiently general and $2\leq k\leq 4$, then the curves $C_Q\subset C^{(k)}$ defined in \eqref{eq:C_Q} are the only irreducible curves covering $C^{(k)}$ and having the same gonality as $C$.

\begin{theorem}\label{thm:covfam}
Let $k\in\{2,3,4\}$ and let $C$ be a smooth complex projective curve of genus $g\geq k+4$.
Suppose that $C$ is not hyperelliptic, bielliptic, or a smooth plane quintic.\\
Given a general point $x\in C^{(k)}$, if $E\subset C^{(k)}$ is an irreducible curve passing through $x$ such that $\gon(E)=\gon(C)$, then $E=C_Q:=\left\{\left.Q+p\in C^{(k)}\right|p\in C\right\}$ for some point $Q\in C^{(k-1)}$.\\
Equivalently, if $\mathcal{E}\stackrel{\pi}{\longrightarrow} T$ is a family of irreducible curves covering an open subset of $C^{(k)}$ such that for general $t\in T$, the fiber $E_t:=\pi^{-1}(t)\subset C^{(k)}$ is an irreducible curve with $\gon(E_t)=\gon(C)$, then the family is isotrivial with $E_t=C_Q$ for some point $Q\in C^{(k-1)}$.
\end{theorem}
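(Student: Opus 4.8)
The plan is to reduce the statement to a study of the maps $E \to C$ induced by the $k$ "coordinate" projections, together with the addition map to $C$, and to show that an irreducible moving curve $E \subset C^{(k)}$ of gonality $\gon(C)$ must be one of the fibers $C_Q$. First I would recall the setup from \cite{BP1}: a general point $x \in C^{(k)}$ is a reduced divisor $p_1 + \dots + p_k$ with the $p_j$ general in $C$, and locally near such a point $C^{(k)}$ looks like $C^k$, so an irreducible curve $E$ through $x$ comes equipped with $k$ dominant (or constant) maps $\varphi_j \colon \widetilde{E} \to C$, where $\widetilde{E}$ is the normalization; moreover the fact that $E$ moves to cover $C^{(k)}$ forces enough of the $\varphi_j$ to be non-constant. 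The key numerical input is \eqref{eq:covgon}, i.e. $\covgon(C^{(k)}) = \gon(C)$ for $2 \le k \le 4$, $g \ge k+4$, proved in \cite{B1, BP1}; I will use the hypothesis $\gon(E) = \gon(C)$ together with a degree-$\gon(C)$ pencil $g^1_\delta$ on $\widetilde E$, where $\delta = \gon(C)$.

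Next I would push the pencil on $\widetilde E$ down to $C$ along each non-constant $\varphi_j$, and invoke a Castelnuovo--Severi type inequality (as is done for surfaces in \cite{LP} and presumably already exploited in \cite{BP1}): if $\varphi_j \colon \widetilde E \to C$ has degree $e_j \ge 1$ and $\widetilde E$ carries a pencil of degree $\delta = \gon(C)$, then either the pencil is the pull-back of the gonality pencil of $C$ (which forces $e_j = 1$, since $\gon(C) = \delta$ and a genuine cover would raise the gonality), or a Castelnuovo--Severi inequality bounds $g(\widetilde E)$ and produces a contradiction with the genericity hypotheses on $C$ (not hyperelliptic, not bielliptic, not a plane quintic — exactly the exclusions that rule out low-gonality or "extra" pencil behaviour, e.g. via the Abramovich--Harris / Serrano-type bounds). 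So the upshot is that each non-constant $\varphi_j$ has degree $1$, i.e. $\widetilde E \cong C$ and each non-constant coordinate projection is an isomorphism onto $C$. I would then argue that if two of these projections, say $\varphi_1$ and $\varphi_2$, are non-constant isomorphisms $\widetilde E \xrightarrow{\sim} C$, their composition gives an automorphism of $C$; using that $x$ is general and $E$ passes through $x$, and that $E$ moves in a covering family, one shows this automorphism must be the identity, so $\varphi_1 = \varphi_2$ as maps to $C$ — but then the image $E$ would lie in the diagonal locus $\{p_1 = p_2\}$, contradicting that $E$ passes through the general (reduced, distinct-points) point $x$. Hence exactly one $\varphi_j$ is non-constant and the other $k-1$ are constant, which says precisely that $E = C_Q$ with $Q = \sum_{i \ne j} \varphi_i \in C^{(k-1)}$; finally the "equivalently" clause and isotriviality follow because the family $\mathcal E \to T$ then has all fibers of the form $C_Q$, parametrized by $Q \in C^{(k-1)}$, and all such curves are abstractly isomorphic to $C$.

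The main obstacle I expect is the step bounding the behaviour of the pencil $g^1_\delta$ on $\widetilde E$ under the coordinate covers $\varphi_j$: one must rule out the possibility that $\widetilde E$ is a nontrivial cover of $C$ still achieving gonality exactly $\delta = \gon(C)$, or that the covering-family condition is met with some $\varphi_j$ of degree $\ge 2$. This is where the exclusion of hyperelliptic, bielliptic, and smooth-plane-quintic curves is essential, and where a careful Castelnuovo--Severi / Abramovich--Harris argument (likely already packaged as a lemma in \cite{BP1}, which I would cite) is needed: for a curve $\widetilde E$ admitting a degree-$e$ map to $C$ and a base-point-free $g^1_\delta$, the inequality $g(\widetilde E) \le (e-1)(\delta-1) + e\cdot g(C)$-type bound combined with $\delta = \gon(C) \le \tfrac{g(C)+3}{2}$ forces $e = 1$ once $g(C) = g \ge k+4$ is large enough and the listed special curves are excluded. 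A secondary subtlety is handling the non-reduced locus of $C^{(k)}$ and making precise the local identification with $C^k$ near a general point, so that the $k$ projections $\varphi_j$ are genuinely well-defined on $\widetilde E$ up to the $S_k$-action; this is essentially bookkeeping already set up in \cite{BP1}, but it must be stated carefully to make the "exactly one non-constant projection" conclusion rigorous.
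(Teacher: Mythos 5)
Your argument has a genuine gap at its very first step, and it is the step that carries essentially all of the difficulty of the theorem. An irreducible curve $E\subset C^{(k)}$ through a general point does \emph{not} come equipped with $k$ maps $\varphi_j\colon\widetilde E\to C$: what one has is the incidence correspondence $\Gamma=\{(P,p): p\in\Supp(P)\}\subset E\times C$, which is a degree-$k$ cover of $\widetilde E$, and this cover splits into $k$ sections only if its monodromy is trivial. The local identification of $C^{(k)}$ with $C^k$ near a reduced point gives local branches, but globally the monodromy can be any transitive subgroup of $S_k$, and in that case no map $\varphi_j$ exists at all. Assuming the splitting amounts to assuming in advance the structural dichotomy (``one non-constant projection, $k-1$ constant ones'') that characterizes $C_Q$, so the argument is essentially circular; the curves that are hard to exclude are precisely those whose correspondence does not split or whose supports overlap in complicated patterns. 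The paper's proof is built to handle exactly this: it passes to the $(k-1)$-planes $\Lambda_{P_i}=\Span(\Supp P_i)\subset\mathbb P^{g-1}$ attached to a general gonal fiber, shows they satisfy the Cayley--Bacharach-type condition $\SP(g-1-k)$ (Theorem \ref{thm:covfamCk}), bounds the dimension of their span (Theorem \ref{thm:dimSP}), and then runs a long case analysis on the multiplicity structure of the degree-$dk$ divisor $D=\sum n_jq_j$ on $C$ (Cases A, B.1(i)--(x), B.2), using geometric Riemann--Roch, Clifford, Martens--Mumford bounds on $\dim W^r_d$, and Castelnuovo's bound, before reaching the conclusion $f(E_t)=C_Q$. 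None of this is ``bookkeeping already set up in \cite{BP1}''.

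The second step is also not sound as stated. The Castelnuovo--Severi inequality gives an \emph{upper} bound $g(\widetilde E)\le (e_j-1)(\delta-1)+e_jg(C)$ when the pencil does not factor through $\varphi_j$; since nothing in the problem bounds $g(\widetilde E)$ from below, no contradiction follows, and it is false in general that a degree-$e\ge 2$ cover of $C$ must have gonality larger than $\gon(C)$ (only pulled-back pencils are controlled). Moreover, the exclusions of hyperelliptic, bielliptic and plane quintic curves enter the actual proof through the Martens--Mumford estimates \eqref{eq:martens}--\eqref{eq:mumford} on the dimension of $W^r_d$ (to show the relevant linear series cannot move with the general point $P_i\in C^{(k)}$), not through a Castelnuovo--Severi or Abramovich--Harris argument, so the role you assign to these hypotheses does not match what is needed. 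The final step (ruling out two degree-one non-constant projections via an automorphism and a dimension count of the resulting family) could be salvaged if the maps existed, but by itself it does not repair the earlier gaps.
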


We point out that when $k=2$, the assumptions on $C$ can be slightly weakened (cf. Remark \ref{rem:plane quintic}), so that Theorem \ref{thm:covfam} does improve \cite[Theorem 1.5]{B1}. 

\smallskip
Given an irreducible complex projective variety $X$, another relevant measure of irrationality introduced in \cite{BDELU} is the \emph{connecting gonality} of $X$, which is defined as the positive integer
\begin{displaymath}
\conngon(X):=\min\left\{d\in \mathbb{N}\left|
\begin{array}{l}
\text{Given two general points }x,y\in X,\,\exists\text{ an irreducible}\\ \text{curve } E\subseteq X  \text{ such that }x,y\in E \text{ and }\gon(E)=d
\end{array}\right.\right\}.
\end{displaymath}
In particular, $\conngon(X)=1$ if and only if $X$ is rationally connected.
Of course, 
$$\conngon(X)\geq \covgon(X),$$ 
and equality may hold (see e.g. \cite[Theorem 1.3]{B3}).
Moreover, it easy to see that if $\mathcal{E}\stackrel{\pi}{\longrightarrow}T$ is a family of irreducible curves on $X$ computing the connecting gonality of $X$, then $\dim T\geq 2\dim X-2$  (cf. \cite[Remark 4.2]{BCFS2}).

It follows that if $X=C^{(k)}$, the family $\mathcal{C}\stackrel{\pi}{\longrightarrow} C^{(k-1)}$ such that $\pi^{-1}(Q)=C_Q$ is too small for computing the connecting gonality of $C^{(k)}$.
Thus Theorem \ref{thm:covfam} gives the following.    

\begin{corollary}\label{cor:conngon}
Let $k\in\{2,3,4\}$ and let $C$ be a smooth complex projective curve of genus $g\geq k+4$.
Suppose that $C$ is not hyperelliptic, bielliptic, or a smooth plane quintic.
Then
\begin{equation}\label{eq:conngon}
\conngon\big(C^{(k)}\big)>\gon(C)
\end{equation}
and, in particular, $\conngon\big(C^{(k)}\big)>\covgon\big(C^{(k)}\big)$. 
\end{corollary}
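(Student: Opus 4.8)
The plan is to obtain Corollary \ref{cor:conngon} as an essentially formal consequence of Theorem \ref{thm:covfam}, using only the geometry of the curves $C_Q$ and the two facts already recalled in the introduction: that $\conngon(X)\geq\covgon(X)$ always, and that $\covgon(C^{(k)})=\gon(C)$ by \eqref{eq:covgon}. Combining these, we get $\conngon(C^{(k)})\geq\gon(C)$, so it remains only to rule out equality, i.e. to show that no irreducible curve of gonality $\gon(C)$ passes through two general points of $C^{(k)}$.

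So I would argue by contradiction: assume $\conngon(C^{(k)})=\gon(C)$. Then for a general pair $(x,y)\in C^{(k)}\times C^{(k)}$ there is an irreducible curve $E\subset C^{(k)}$ with $x,y\in E$ and $\gon(E)=\gon(C)$. Since a general such pair has $x$ lying in any prescribed dense open subset of $C^{(k)}$, in particular in the locus where Theorem \ref{thm:covfam} applies, that theorem forces $E=C_Q$ for some $Q\in C^{(k-1)}$. Now $x\in C_Q$ means exactly that $x-Q$ is an effective divisor (necessarily of degree $1$), hence $Q\leq x$ and $\Supp(Q)\subseteq\Supp(x)$; likewise $y\in C_Q$ gives $\Supp(Q)\subseteq\Supp(y)$. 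Therefore $\Supp(Q)\subseteq\Supp(x)\cap\Supp(y)$ while $\deg Q=k-1\geq 1$, which is impossible because two general effective divisors of degree $k$ on $C$ have disjoint supports (the locus of pairs with a common point in the support is a proper closed subset of $C^{(k)}\times C^{(k)}$). This contradiction gives $\conngon(C^{(k)})>\gon(C)$, and then $\conngon(C^{(k)})>\covgon(C^{(k)})$ follows from \eqref{eq:covgon}.

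Equivalently, one can phrase the last step dimensionally, which is the formulation suggested by the discussion preceding the corollary: a family $\mathcal{E}\stackrel{\pi}{\longrightarrow}T$ computing $\conngon(C^{(k)})$ would have general fiber an irreducible curve of gonality $\gon(C)$ through a general point, hence by Theorem \ref{thm:covfam} every such fiber is a $C_Q$, so the assignment $t\mapsto Q_t$ (well defined up to the isotriviality of Theorem \ref{thm:covfam}) exhibits the relevant part of $T$ as dominating a subvariety of $C^{(k-1)}$, whence $\dim T\leq k-1<2k-2=2\dim C^{(k)}-2$; this contradicts \cite[Remark 4.2]{BCFS2}. I do not expect a real obstacle here: the only points needing a word of care are the reduction step — checking that a curve connecting two general points of $C^{(k)}$ does pass through a \emph{general} point, so that Theorem \ref{thm:covfam} is applicable — and, if one adopts the dimensional version, setting up the incidence variety of a connecting family cleanly enough to read off $\dim T\leq k-1$; both are routine.
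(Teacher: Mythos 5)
Your argument is correct, and your primary route is genuinely different from the paper's. The paper works with a family $\mathcal{E}\stackrel{\pi}{\longrightarrow}T\subset \Hilb\big(C^{(k)}\big)$ computing $\conngon\big(C^{(k)}\big)$: it first shows $\dim T\geq 2k-2$ (via the dominance of $\cE\times_T\cE\longrightarrow C^{(k)}\times C^{(k)}$, as in \cite[Remark 4.2]{BCFS2}), then invokes Theorem \ref{thm:covfam} to say that equality $\conngon=\covgon=\gon(C)$ would force the family to be the $(k-1)$-dimensional family $\{C_Q\}_{Q\in C^{(k-1)}}$, and concludes from $k-1<2k-2$. Your first argument avoids the dimension count entirely: for a general pair $(x,y)$ with disjoint supports, a connecting curve of gonality $\gon(C)$ would be a $C_Q$ by Theorem \ref{thm:covfam}, and then $\Supp(Q)\subseteq \Supp(x)\cap\Supp(y)=\emptyset$ contradicts $\deg Q=k-1\geq 1$. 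This is more elementary and self-contained (no Hilbert scheme, no incidence-variety dimension count, no appeal to \cite{BCFS2}), at the small cost of the routine genericity bookkeeping you already flag: intersect the open locus where Theorem \ref{thm:covfam} applies with the open locus of pairs having disjoint supports, both dense open in $C^{(k)}\times C^{(k)}$. Your alternative dimensional phrasing is essentially the paper's proof; there the only point to state carefully is that $T$ parameterizes distinct curves (e.g. $T\subset\Hilb$), so that $t\mapsto Q_t$ being generically injective yields $\dim T\leq k-1$ against $\dim T\geq 2k-2$. Either way the conclusion $\conngon\big(C^{(k)}\big)>\gon(C)=\covgon\big(C^{(k)}\big)$ follows from \eqref{eq:covgon} exactly as you say.
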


\smallskip
The proof of Theorem \ref{thm:covfam} relies on the very same argument as \cite[Theorem 1.1]{BP1}.
In particular, we consider the canonical embedding $C\longhookrightarrow \mathbb{P}^{g-1}$ and for a general point $P=p_1+\dots+p_{k}\in C^{(k)}$, we define the $(k-1)$-plane $\Lambda_P:=\Span\left(p_1,\dots,p_k\right)\subset \mathbb{P}^{g-1}$.
Let $E\subset C^{(k)}$ be a $d$-gonal curve passing through a general point of $C^{(k)}$ with $d=\gon(C)$, and let $P_1,\dots,P_d\in E$ be the points corresponding to a general fiber of the $d$-gonal map $\widetilde{E}\longrightarrow \mathbb{P}^1$.
Then it turns out that the linear span $\Sigma$ of the $(k-1)$-planes $\Lambda_{P_1},\ldots,\Lambda_{P_d}\subset \mathbb{P}^{g-1}$ has dimension much smaller than expected (cf. Theorem \ref{thm:dimSP}). 
Since $\Sigma$ is actually the linear span of the points of $C$ supporting the points $P_1,\dots,P_d\in C^{(k)}$, it cuts out on $C$ a linear series of dimension $\deg (\Sigma\cap C)-1-\dim \Sigma$.
Then it takes a great deal of case work to decide when such a linear series exists, and we eventually conclude that there exists $Q\in C^{(k-1)}$ such that the points $P_1,\dots,P_d\in E$ corresponding to a general fiber of the $d$-gonal map lie on the curve $C_Q$.

\smallskip
The paper is organized as follows.
In Section \ref{section:prel}, we introduce the main techniques involved in the proof of Theorem \ref{thm:covfam}.
In particular, we collect the main results of \cite[Section 2]{BP1}, which concern the dimension of the linear span of linear subspaces of $\mathbb{P}^{n}$ satisfying a condition of Cayley--Bacharach type, and we present the framework of \cite{B1} connecting these linear spaces to families of curves covering $C^{(k)}$. 
Moreover, we briefly recall some standard results concerning Brill-Noether theory.

Finally, Section \ref{section:proof} is devoted to prove Theorem \ref{thm:covfam} and Corollary \ref{cor:conngon}.

\subsection*{Notation}

We work throughout over the field $\mathbb{C}$ of complex numbers.
When we speak of a \emph{smooth} curve, we always implicitly assume it to be irreducible.
We say that a property holds for a \emph{general} (resp. \emph{very general}) point ${x\in X}$ if it holds on a Zariski open nonempty subset of $X$ (resp. on the complement of the countable union of proper subvarieties of $X$).


\medskip
\section{Preliminaries}\label{section:prel} 

In this section, we present the main results involved in the proof of Theorem \ref{thm:covfam}.

\smallskip\subsection{Linear subspaces of $\mathbb{P}^n$ in special position}\label{sub:sp} 

Let us fix integers $1\leq k\leq n$ and $d\geq 2$.  
In this section, we review some notions and results included in \cite[Section 2]{BP1}.
In particular, we are interested in collections of $(k-1)$-dimensional linear spaces in $\mathbb{P}^n$ satisfying a property of Cayley-Bacharach type, defined as follows (cf. \cite[Definition 3.2]{B1}).

\begin{definition}\label{def:SP}
Let $\Lambda_1,\dots,\Lambda_d\subset \mathbb{P}^{n}$ be linear subspaces of dimension $k-1$.
We say that $\Lambda_1,\dots,\Lambda_d$ are \emph{in special position with respect to} $(n-k)$\emph{-planes}, or just $\SP(n-k)$, if for any $j=1,\dots,d$ and for any $(n-k)$-plane $L\subset \mathbb{P}^n$ intersecting $\Lambda_1,\dots,\widehat{\Lambda_j},\dots,\Lambda_d$, we have that $L$ meets $\Lambda_j$ too.
\end{definition}

\begin{remark}\label{rem:unionSP} 
We note that the linear spaces $\Lambda_1,\dots,\Lambda_d$ are not necessarily distinct.
In particular, two $(k-1)$-planes $\Lambda_1,\Lambda_2$ are $\SP(n-k)$ if and only if they coincide.
\end{remark}

\begin{definition}\label{def:partition}
Let $\Lambda_1,\dots,\Lambda_d\subset \mathbb{P}^{n}$ be linear subspaces of dimension $k-1$ in special position with respect to $(n-k)$-planes.
We say that the sequence $\Lambda_1,\dots,\Lambda_d$ is \emph{decomposable} if there exists a non-trivial partition of the set of indexes $\{1,\dots,d\}$, where each part $\{i_1,\dots,i_t\}$ is such that the corresponding $(k-1)$-planes $\Lambda_{i_1},\dots,\Lambda_{i_t}$ are $\SP(n-k)$.
Otherwise, we say that the sequence is \emph{indecomposable}.
\end{definition}

The following result collects \cite[Theorem 2.5]{BP1} and \cite[Corollary 2.6]{BP1}, which bound the dimension of the linear span of a sequence of $(k-1)$-planes satisfying property $\SP(n-k)$.

\begin{theorem}\label{thm:dimSP}
Let $\Lambda_1,\dots,\Lambda_d\subset \mathbb{P}^{n}$ be linear subspaces of dimension $k-1$ in special position with respect to $(n-k)$-planes.
Consider a partition of the set of indexes $\{1,\dots,d\}$ such that each part $\{i_1,\dots,i_t\}$ corresponds to an indecomposable sequence $\Lambda_{i_1},\dots,\Lambda_{i_t}$ of $(k-1)$-planes that satisfy $\SP(n-k)$.
Denoting by $m$ the number of parts of the partition, we have that
$$
\dim\Span\left(\Lambda_1,\dots,\Lambda_d\right)\leq d+k-3+(m-1)(k-2).
$$
In particular, if the sequence of $(k-1)$-planes $\Lambda_1,\dots,\Lambda_d\subset \mathbb{P}^{n}$ is indecomposable, then $m=1$ and 
$$
\dim\Span\left(\Lambda_1,\dots,\Lambda_d\right)\leq d+k-3.
$$
\end{theorem}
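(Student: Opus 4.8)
The plan is to reduce Theorem \ref{thm:dimSP} to its indecomposable assertion, and then to prove the latter by cutting the configuration down to a set of points in Cayley--Bacharach position, where the base case $k=1$ is classical.

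First I would deduce the general bound from the indecomposable one. Given the partition into $m$ indecomposable parts, of sizes $s_1,\dots,s_m$ with $\sum_\ell s_\ell=d$, the indecomposable case yields $\dim\Span(\text{part }\ell)\le s_\ell+k-3$ for every $\ell$. Combining these through the elementary join inequality $\dim\Span(A\cup B)\le\dim\Span A+\dim\Span B+1$, applied $m-1$ times, gives
\[
\dim\Span\left(\Lambda_1,\dots,\Lambda_d\right)\le\sum_{\ell=1}^m\left(s_\ell+k-3\right)+(m-1)=d+k-3+(m-1)(k-2),
\]
which is exactly the asserted inequality. Thus everything reduces to proving that an \emph{indecomposable} sequence of $(k-1)$-planes satisfying $\SP(n-k)$ spans a space of dimension at most $d+k-3$.

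For the indecomposable case, set $\Sigma:=\Span(\Lambda_1,\dots,\Lambda_d)$ and $N:=\dim\Sigma$. I would cut by a general linear subspace $H\subset\mathbb{P}^n$ of codimension $k-1$. Since $\dim\Lambda_i+\dim H=n$, the intersection $q_i:=\Lambda_i\cap H$ is a single reduced point for general $H$, and $\dim(\Sigma\cap H)=N-k+1$. The crucial elementary remark is that the hyperplanes of $H$ are precisely the $(n-k)$-planes of $\mathbb{P}^n$ contained in $H$; hence, since $\Lambda_i\cap H=\{q_i\}$, such a hyperplane $M$ satisfies $q_i\in M$ if and only if $M$ meets $\Lambda_i$. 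Feeding this equivalence into Definition \ref{def:SP}, the $\SP(n-k)$ hypothesis translates verbatim into the statement that the points $q_1,\dots,q_d$ are in Cayley--Bacharach position with respect to the hyperplanes of $H$, i.e.\ they satisfy the $k=1$ instance of property $\SP$. The classical Cayley--Bacharach theorem for points, which is the base case $k=1$ of Theorem \ref{thm:dimSP}, then gives $\dim\Span(q_1,\dots,q_d)\le d-2$.

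The heart of the argument—and the step I expect to be the main obstacle—is to show that for general $H$ the points $q_i$ actually \emph{span} the whole slice $\Sigma\cap H$, so that $\dim\Span(q_1,\dots,q_d)=N-k+1$ and therefore $N-k+1\le d-2$, i.e.\ $N\le d+k-3$. This spanning statement is precisely where indecomposability must enter, as it is false in general: two skew lines in $\mathbb{P}^3$ (which by Remark \ref{rem:unionSP} are \emph{not} in $\SP$) meet a general plane in two points spanning only a line, and even the decomposable $\SP$ configuration $\Lambda_1=\Lambda_2$, $\Lambda_3=\Lambda_4$ coming from two distinct lines meets a general plane in two double points spanning a proper subspace. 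Failure of spanning always produces a hyperplane $M'$ of $\Sigma\cap H$ containing every $q_i$, equivalently an $(N-k)$-plane of $\Sigma$ that meets all the $\Lambda_i$ while confining the intersection points to a too-small linear space. I would rule this out for indecomposable sequences: projecting $\Sigma$ from such an $M'$ forces excess intersection with each $\Lambda_i$ and clusters the images, and, via the $\SP$ condition together with Remark \ref{rem:unionSP}, this clustering would exhibit a nontrivial decomposition of $\Lambda_1,\dots,\Lambda_d$, contradicting indecomposability. Making this precise—estimating the locus of $(N-k)$-planes with excess intersection against \emph{all} the $\Lambda_i$ and verifying that a general $H$ contains none of them—is the technical core on which the whole proof hinges.
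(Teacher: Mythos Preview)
The paper does not contain a proof of Theorem~\ref{thm:dimSP}: the result is imported verbatim from \cite[Theorem~2.5 and Corollary~2.6]{BP1}, so there is no argument here against which to compare yours.

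On the proposal itself: your reduction of the general bound to the indecomposable case via the join inequality is correct and clean. The slicing by a general codimension-$(k-1)$ linear subspace $H$ is also a sound opening, and your translation of $\SP(n-k)$ for the $\Lambda_i$ into the Cayley--Bacharach condition for the points $q_i=\Lambda_i\cap H$ with respect to hyperplanes of $H$ is valid (since $M\subset H$ meets $\Lambda_i$ iff $q_i\in M$), yielding $\dim\Span(q_1,\dots,q_d)\le d-2$ by the elementary $k=1$ case.

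The genuine gap is exactly the step you yourself flag: that for general $H$ the points $q_i$ span $\Sigma\cap H$. Your sketch is not an argument. Projecting $\Sigma$ from a hyperplane $M'\subset\Sigma\cap H$ through all the $q_i$ lands in $\mathbb{P}^{k-1}$ and sends each $\Lambda_i$ to a hyperplane there, but you give no mechanism by which this arrangement of $d$ hyperplanes in $\mathbb{P}^{k-1}$, together with the $\SP$ hypothesis, produces a nontrivial partition of the original $\Lambda_i$ into sub-sequences that are themselves $\SP(n-k)$; invoking Remark~\ref{rem:unionSP} does not bridge that gap. There is also a tension between the two strategies in your final paragraph: ``clustering would exhibit a nontrivial decomposition'' is an argument by contradiction, while ``verifying that a general $H$ contains none of them'' is a direct genericity count, and the latter cannot succeed as stated---whenever $N\ge d+k-1$, \emph{every} $H$ is bad for pure dimension reasons, since $d$ points can span at most a $(d-1)$-plane but $\dim(\Sigma\cap H)=N-k+1\ge d$. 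So the heart of the indecomposable case remains to be supplied.
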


Finally, we recall the behavior of property $\SP(n-k)$ under projection from a linear subspace of $\mathbb{P}^n$.
To this aim, let us consider a linear subspace $L\subset \mathbb{P}^n$ of dimension $\alpha:=\dim L\geq 0$, and let $\pi_L\colon\mathbb{P}^n \dashrightarrow \mathbb{P}^{n-\alpha-1}$ denote the projection from $L$. 
Therefore, if $\Lambda\subset \mathbb{P}^n$ is a linear subspace with $\dim(L\cap \Lambda)=\beta$, then $\dim \pi_L(\Lambda)=\dim \Lambda-\beta-1$. 
Besides, for any linear subspace $\Gamma\subset \mathbb{P}^{n-\alpha-1}$, the Zariski closure of $\pi_L^{-1}(\Gamma)\subset \mathbb{P}^n$ is a linear subspace of dimension $\dim \Gamma+\alpha+1$ containing $L$. 

Using this notation, the following holds (cf. \cite[Lemma 2.13]{BP1}).

\begin{lemma}\label{lem:sottoSP}
Let $\Lambda_1,\dots,\Lambda_d\subset\mathbb{P}^n$ be $(k-1)$-planes that satisfy $\SP(n-k)$, and let $L\subset \mathbb{P}^n$ be a linear subspace of dimension $\alpha\geq 0$. 
Let $2\leq r\leq d-1$ be an integer such that $L\cap \Lambda_i=\emptyset$ for any $i=1,\dots,r$, and $L\cap \Lambda_i\neq \emptyset$ for any $i=r+1,\dots,d$.
Consider the $(k-1)$-planes $\Gamma_i:=\pi_L(\Lambda_i)\subset \mathbb{P}^{n-\alpha-1}$, with $i=1,\dots,r$. 
Then $\Gamma_1,\dots,\Gamma_r$ are $\SP(n-\alpha-1-k)$.
\end{lemma}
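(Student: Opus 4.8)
The statement to prove is Lemma \ref{lem:sottoSP}, about the behavior of property $\SP(n-k)$ under projection.

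The plan is to argue by lifting test planes from $\mathbb{P}^{n-\alpha-1}$ back up to $\mathbb{P}^n$ along the projection $\pi_L$, which is exactly the strategy behind \cite[Lemma 2.13]{BP1}. First note that since $L\cap\Lambda_i=\emptyset$ for $i=1,\dots,r$, the projection $\pi_L$ restricts to an isomorphism of $\Lambda_i$ onto $\Gamma_i$, so each $\Gamma_i$ is genuinely a $(k-1)$-plane in $\mathbb{P}^{n-\alpha-1}$ and the statement makes sense. Fix an index $j\in\{1,\dots,r\}$ and let $M\subset\mathbb{P}^{n-\alpha-1}$ be any $(n-\alpha-1-k)$-plane meeting $\Gamma_i$ for every $i\in\{1,\dots,r\}\setminus\{j\}$; we must produce a point of $M\cap\Gamma_j$.

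Set $\widetilde{M}:=\overline{\pi_L^{-1}(M)}\subset\mathbb{P}^n$. By the properties of projection recalled just above, $\widetilde{M}$ is a linear subspace containing $L$ of dimension $(n-\alpha-1-k)+\alpha+1=n-k$. I claim that $\widetilde{M}$ meets $\Lambda_i$ for every $i\in\{1,\dots,d\}\setminus\{j\}$. For $i\geq r+1$ this is immediate, since $L\subseteq\widetilde{M}$ and $L\cap\Lambda_i\neq\emptyset$ by hypothesis. For $i\leq r$ with $i\neq j$, choose $q\in M\cap\Gamma_i$; as $\Gamma_i=\pi_L(\Lambda_i)$, there is a point $p\in\Lambda_i$ with $\pi_L(p)=q$, and then $p\in\pi_L^{-1}(q)\subseteq\pi_L^{-1}(M)\subseteq\widetilde{M}$, whence $p\in\widetilde{M}\cap\Lambda_i$.

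Since $\widetilde{M}$ is an $(n-k)$-plane meeting $\Lambda_1,\dots,\widehat{\Lambda_j},\dots,\Lambda_d$ and the $\Lambda_i$ satisfy $\SP(n-k)$, we conclude that $\widetilde{M}\cap\Lambda_j\neq\emptyset$; pick $p$ in this intersection. Because $j\leq r$ we have $L\cap\Lambda_j=\emptyset$, so $p\notin L$, hence $\pi_L(p)$ is defined and lies in $\pi_L(\Lambda_j)=\Gamma_j$; moreover $p\in\widetilde{M}\setminus L=\pi_L^{-1}(M)$ forces $\pi_L(p)\in M$. Thus $\pi_L(p)\in M\cap\Gamma_j$, and this shows $\Gamma_1,\dots,\Gamma_r$ are $\SP(n-\alpha-1-k)$. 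There is no deep obstacle in this argument — it is essentially bookkeeping of which $\Lambda_i$ are met by the lift $\widetilde{M}$ — and the only points that require care are checking that $\widetilde{M}$ has dimension exactly $n-k$, so that the $\SP(n-k)$ hypothesis on the $\Lambda_i$ applies verbatim, and remembering to verify $p\notin L$ before projecting back down, which is precisely where the assumption $j\leq r$ is used. Both are handled by the elementary behavior of linear projections recorded before the statement.
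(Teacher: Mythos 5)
Your argument is correct: lifting the test plane $M$ to the $(n-k)$-plane $\widetilde{M}=\overline{\pi_L^{-1}(M)}\supseteq L$, checking it meets every $\Lambda_i$ with $i\neq j$ (via $L$ for $i\geq r+1$ and via preimages of points of $M\cap\Gamma_i$ for $i\leq r$), applying $\SP(n-k)$, and projecting the resulting point of $\widetilde{M}\cap\Lambda_j$ back down is exactly the argument of \cite[Lemma 2.13]{BP1}, which the paper invokes without reproducing. The dimension count for $\widetilde{M}$ and the verification that the intersection point lies off $L$ are handled correctly, so nothing is missing.
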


\subsection{Covering families on symmetric products of curves}\label{sub:covgon}
In this section, we are aimed at discussing the covering gonality in terms of covering families.
Moreover, we briefly recall the framework of \cite{B1} in order to explain the relation between covering families on $C^{(k)}$ and linear subspaces of $\mathbb{P}^{g-1}$ having dimension $k-1$ and satisfying property $\SP(g-1-k)$.

\smallskip
Let us consider an irreducible complex projective variety $X$ of dimension $n$.

\begin{definition}\label{def:covfam}
A \emph{covering family of} $d$\emph{-gonal curves} on $X$ consists of a smooth family $\mathcal{E}\stackrel{\pi}{\longrightarrow} T$ of irreducible curves, together with a dominant morphism $f\colon \mathcal{E}\longrightarrow X$ such that for general $t\in T$, the fiber $E_t:=\pi^{-1}(t)$ is a smooth curve with gonality $\gon(E_t)=d$ and the restriction $f_{t}\colon E_t\longrightarrow X$ of $f$ is birational onto its image. 
\end{definition}

In these terms, the covering gonality of $X$ coincides with the least integer $d$ for which a covering family of $d$-gonal curves on $X$ exists  (cf. \cite[Lemma 2.1]{GK}), and the irreducible curves on $X$ computing the covering gonality are the images $f(E_t)$.

\begin{remark}\label{rem:covfam}
Let $\mathcal{E}\stackrel{\pi}{\longrightarrow} T$ be a family of irreducible curves covering an open subset of $X$ such that for general $t\in T$, the fiber $E_t:=\pi^{-1}(t)\subset X$ is an irreducible curve with $\gon(E_t)=d$.
By making a base change 
\begin{equation*}
\xymatrix{ \mathcal{F} \ar[r]^\nu \ar[d]_{\psi} & \mathcal{E}\ar[d]^{\pi} \\  U \ar[r]^{h} & T}
\end{equation*}
and possibly shrinking $U$, we obtain a smooth family $\mathcal{F}\stackrel{\psi}{\longrightarrow} U$ such that, given a general $t\in T$ and $u\in h^{-1}(t)$, the restriction $\nu_{|F_u}\colon F_u\longrightarrow E_t$ is the normalization map.
Thus $\mathcal{F}\stackrel{\psi}{\longrightarrow} U$ is a covering family of $d$-gonal curves on $X$.
\end{remark}

\smallskip
Let $C$ be a smooth projective curve of genus $g\geq 3$. 
For $2\leq k\leq g-1$, let $C^{(k)}$ be its $k$-fold symmetric product and let $\mathcal{E}\stackrel{\pi}{\longrightarrow} T$ be a covering family of $d$-gonal curves on $C^{(k)}$.
According to \cite[Remark 1.5]{BDELU}, we may assume that both the varieties $T$ and $\mathcal{E}$ are smooth, with $\dim(T)=k-1$.
Furthermore, up to base changing $T$, there is a commutative diagram
\begin{equation}\label{diagram:covgon}
\xymatrix{C^{(k)}  & \mathcal{E} \ar[l]_-{f} \ar[dr]_-{\pi} \ar[r]^-\varphi & T\times \mathbb{P}^1 \ar[d]^-{\mathrm{pr_1}} \\ & & T,\\}
\end{equation}
where the restriction $\varphi_t\colon E_t \longrightarrow \{t\}\times\mathbb{P}^1  \cong \mathbb{P}^1$ is a $d$-gonal map (cf. \cite[Example 4.7]{B1}). 

\smallskip
Let ${\phi\colon C\longrightarrow \mathbb{P}^{g-1}}$ be the canonical map of $C$, and let $\mathbb{G}(k-1,g-1)$ be the Grassmannian parameterizing ${(k-1)}$-planes in
$\mathbb{P}^{g-1}$.
Then we define the rational map 
\begin{equation}\label{diagram:gauss}
\gamma\colon C^{(k)}\dashrightarrow \mathbb{G}(k-1,g-1),
\end{equation} 
sending a general point ${p_1+\dots+p_k\in C^{(k)}}$ to the point of $\mathbb{G}(k-1,g-1)$ which parameterizes the $(k-1)$-plane $\Span\left(\phi(p_1),\dots,\phi(p_k)\right)\subset \mathbb{P}^{g-1}$. 

So we can state the following result, which is somehow the core of the proof of Theorem \ref{thm:covfam}. 
In particular, it expresses the relation between covering families of irreducible $d$-gonal curves on $C^{(k)}$ and $(k-1)$-planes of $\mathbb{P}^{g-1}$ in special position with respect to $(g-1-k)$-planes (cf. \cite[Section 4]{B1} and \cite[Theorem 3.2]{BP1}).

\begin{theorem}\label{thm:covfamCk}
Let $C$ be a smooth projective curve of genus $g\geq 2$ and let $C^{(k)}$ be its $k$-fold symmetric product, with $2\leq k\leq g-1$.
Let $\mathcal{E}\stackrel{\pi}{\longrightarrow} T$ be a covering family of $d$-gonal curves on $C^{(k)}$.
Using notation as above, consider a general point $(t,y)\in T\times \mathbb{P}^1$ and let $\varphi^{-1}(t,y)=\left\{x_1,\ldots,x_d\right\}\subset E_t$ be its fiber.\\
Then the $(k-1)$-planes parameterized by $(\gamma\circ f) (x_1),\dots,(\gamma\circ f) (x_d)\in \mathbb{G}(k-1,g-1)$ are in special position with respect to $(g-1-k)$-planes of $\mathbb{P}^{g-1}$.
\end{theorem}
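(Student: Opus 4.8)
The plan is to reduce the property $\SP(g-1-k)$ to a Cayley--Bacharach statement for a canonical divisor on the curve $E_t$, and then to verify that statement via Riemann--Roch, exploiting that $x_1,\dots,x_d$ form a fibre of a base-point-free $g^1_d$.

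Set $D_i:=f(x_i)\in C^{(k)}$ and $\Lambda_i:=(\gamma\circ f)(x_i)=\Span\big(\phi(p^{(i)}_1),\dots,\phi(p^{(i)}_k)\big)$ for $D_i=p^{(i)}_1+\dots+p^{(i)}_k$. Given a $k$-dimensional subspace $W=\langle\omega_1,\dots,\omega_k\rangle\subseteq H^0(C,\omega_C)$, let $L_W:=H_{\omega_1}\cap\dots\cap H_{\omega_k}\subset\mathbb{P}^{g-1}$ be the $(g-1-k)$-plane it cuts out ($H_{\omega_a}$ the hyperplane corresponding to $\omega_a$), and let $\zeta_W$ be the nonzero holomorphic $k$-form on $C^{(k)}$ whose pullback under $C^k\to C^{(k)}$ equals $\bigwedge_{a=1}^{k}\big(\sum_{b=1}^{k}\mathrm{pr}_b^{*}\omega_a\big)$. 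A computation in local coordinates shows that, at a reduced $P=p_1+\dots+p_k$ with $\dim|P|=0$, the form $\zeta_W$ vanishes if and only if $\det\big(\omega_a(p_b)\big)_{a,b}=0$, equivalently if and only if $\Lambda_P\cap L_W\neq\emptyset$; thus the zero divisor $\mathcal{D}_W\in\big|K_{C^{(k)}}\big|$ of $\zeta_W$ agrees, on the open set where $\gamma$ is defined (so $\dim|P|=0$), with $\{P:\Lambda_P\cap L_W\neq\emptyset\}$. Since every $(g-1-k)$-plane of $\mathbb{P}^{g-1}$ has the form $L_W$, the planes $\Lambda_1,\dots,\Lambda_d$ satisfy $\SP(g-1-k)$ precisely when, for every such $W$ and every $j$, one has $D_j\in\mathcal{D}_W$ whenever $D_i\in\mathcal{D}_W$ for all $i\neq j$.

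Fix a general $(t,y)$ and put $E:=E_t$. The pullback $f^{*}\zeta_W$ is a holomorphic section of $\omega_{\mathcal{E}}$ (vanishing along the ramification divisor of $f$), and since the fibres of $\pi$ have trivial normal bundle, adjunction identifies $\omega_{\mathcal{E}}|_E$ with $\omega_E$, so $\eta_W:=(f^{*}\zeta_W)|_E\in H^0(E,\omega_E)$. By genericity of $(t,y)$, the points $x_1,\dots,x_d$ lie off the ramification locus of $f$ and off the indeterminacy locus of $\gamma$, each $D_i$ is reduced with $\dim|D_i|=0$, and $E$ is smooth with $\pi$ smooth along it; hence $f$ is a local isomorphism near each $x_i$, and for every $i$ one has: $\eta_W$ vanishes at $x_i$ $\iff$ $\zeta_W$ vanishes at $D_i$ $\iff$ $\Lambda_i\cap L_W\neq\emptyset$ (if $\eta_W\equiv 0$, i.e.\ $f(E)\subseteq\mathcal{D}_W$, this is vacuously true, and then $\Lambda_i\cap L_W\neq\emptyset$ for all $i$). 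Finally, $x_1+\dots+x_d=\varphi_t^{-1}(y)$ is a general fibre of the base-point-free $g^1_d$ on $E$ induced by $\varphi_t$; hence $|x_1+\dots+x_d|$ is base-point-free and Riemann--Roch on $E$ yields $h^0\big(\omega_E(-x_1-\dots-x_d)\big)=h^0\big(\omega_E(-x_1-\dots-\widehat{x_j}-\dots-x_d)\big)$ for every $j$, i.e.\ every holomorphic $1$-form on $E$ vanishing at all $x_i$ with $i\neq j$ also vanishes at $x_j$. Applying this to $\eta_W$ and translating back gives: $\Lambda_i\cap L_W\neq\emptyset$ for all $i\neq j$ implies $\Lambda_j\cap L_W\neq\emptyset$, which is $\SP(g-1-k)$.

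The main obstacle is the genericity bookkeeping underlying the chain ``$\eta_W$ vanishes at $x_i\iff\zeta_W$ vanishes at $D_i\iff\Lambda_i\cap L_W\neq\emptyset$''. One has to pick $(t,y)$ so that \emph{all} $d$ of the $x_i(t,y)$ simultaneously avoid the ramification locus of $f$ and the indeterminacy locus of $\gamma$, so that each $D_i$ is reduced with $\dim|D_i|=0$ (using the standard Brill--Noether fact that $\gamma$ is defined along a dense open subset of $C^{(k)}$ when $k\leq g-1$, so that $\Lambda_i$ is an honest $(k-1)$-plane), and so that $E_t$ is smooth with $\pi$ smooth along it. All of these are open conditions on $(t,y)$ not involving $W$; the only $W$-dependent condition, $f(E_t)\not\subseteq\mathcal{D}_W$, can be dispensed with, since when it fails $\Lambda_i\cap L_W\neq\emptyset$ for every $i$ and there is nothing to prove. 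Hence the argument runs over one fixed dense open subset of $T\times\mathbb{P}^1$, uniformly in $W$ and $j$.
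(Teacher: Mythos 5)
Your argument is correct, and it is essentially the argument behind this statement: note that the paper does not prove Theorem \ref{thm:covfamCk} here but quotes it from \cite[Section 4]{B1} and \cite[Theorem 3.2]{BP1}, and those proofs rest on exactly the two ingredients you use, namely the identification $H^0\big(C^{(k)},\Omega^k_{C^{(k)}}\big)\cong \bigwedge^k H^0(C,\omega_C)$ together with the local computation showing that $\zeta_W$ vanishes at a reduced $P$ with $\dim|P|=0$ if and only if $\Lambda_P$ meets the $(g-1-k)$-plane $L_W$, and a Cayley--Bacharach statement for the fibers of the gonal pencil. The only (mild) difference is in how you obtain the latter: the cited sources run the Mumford--Lopez--Pirola machinery of correspondences with null trace (cf. \cite{LP}), deducing the Cayley--Bacharach property of the gonal fibers with respect to $|K_{C^{(k)}}|$ from the vanishing of the trace on $T\times\mathbb{P}^1$, whereas you restrict the section $f^*\zeta_W$ of $\omega_{\mathcal E}$ to the fiber $E_t$ (legitimately, via the triviality of $N_{E_t/\mathcal E}$ and adjunction) and apply Riemann--Roch on $E_t$ to the base-point-free $\mathfrak{g}^1_d$. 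This is an equivalent, slightly more self-contained packaging of the same trace argument---both ultimately exploit that $\mathbb{P}^1$ carries no holomorphic $1$-forms---and your genericity bookkeeping (all $x_i$ simultaneously off the ramification locus of $f$ and mapping to reduced divisors $D_i$ with $\dim|D_i|=0$, uniformly in $W$) is exactly the reduction needed and is handled correctly.
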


\subsection{Brill-Noether theory}\label{sub:bn}
In this section we recall some results regarding the existence of linear series on a smooth projective curve $C$.

Given two positive integers $r$ and $d$, let $W^r_d$ be the subvariety of $\Pic^d(C)$, which parameterizes complete linear series on $C$ having degree $d$ and dimension at least $r$ (cf. \cite[p.\,153]{ACGH}). 
In the proof of Theorem \ref{thm:covfam}, we frequently use Martens' theorem and its refinement due to Mumford, concerning the dimension of $\dim W^r_d$ (see \cite[Theorems IV.5.1 and IV.5.2]{ACGH}).
So we summarize these results in the following theorem. 

\begin{theorem}\label{thm:mm}
Let $C$ be a smooth non-hyperelliptic curve of genus $g\geq 4$. 
Let $d$ and $r$ be two integers such that $2\leq d\leq g-1$ and $0< 2r\leq d$. 
Then
\begin{equation}
\label{eq:martens}
\dim W^r_d\leq d-2r-1.
\end{equation}
If in addition $d\leq g-2$ and $C$ is not trigonal, bielliptic, or a smooth plane quintic, then 
\begin{equation}
\label{eq:mumford}
\dim W^r_d\leq d-2r-2.
\end{equation}
\end{theorem}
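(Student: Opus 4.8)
The plan is to bound $\dim W^r_d$ by estimating the Zariski tangent space at a general point of a top-dimensional component, and then to extract the two successive improvements (Martens, then Mumford) by controlling how far a certain multiplication map can be from having maximal corank. Fix an irreducible component $X\subseteq W^r_d$ of maximal dimension. After passing to a suitable locally closed subset I may assume that the general $L\in X$ satisfies $h^0(L)=r+1$ exactly (if instead $X\subseteq W^{r+1}_d$ I rerun the argument with $r$ increased, which only sharpens the bound), and---after peeling off a base divisor and inducting on the degree---that the general $L\in X$ is base-point-free. The crucial input is the tangent space identification
\begin{equation*}
T_L W^r_d=\big(\operatorname{Im}\mu_0\big)^{\perp}\subseteq H^1(C,\mathcal O_C)=T_L\Pic^d(C),
\end{equation*}
where $\mu_0\colon H^0(L)\otimes H^0(K_C\otimes L^{-1})\to H^0(K_C)$ is the cup-product (Petri) map and the annihilator is taken inside $H^0(K_C)^{\vee}\cong H^1(C,\mathcal O_C)$ via Serre duality. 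Hence $\dim X\le\dim T_L W^r_d=g-\operatorname{rank}\mu_0$, and everything reduces to bounding $\operatorname{rank}\mu_0$ from below.

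For the first, Clifford-type, bound I would invoke the base-point-free pencil trick in its general form: for two nonzero subspaces $A,B$ of the function field $\mathbb C(C)$ one has $\dim(A\cdot B)\ge\dim A+\dim B-1$. Applying this with $A=H^0(L)$ and $B=H^0(K_C\otimes L^{-1})$ gives $\operatorname{rank}\mu_0\ge h^0(L)+h^0(K_C\otimes L^{-1})-1$. Riemann--Roch together with $h^0(L)=r+1$ yields $h^0(K_C\otimes L^{-1})=h^1(L)=g-d+r$, so $\operatorname{rank}\mu_0\ge g-d+2r$ and therefore $\dim W^r_d\le d-2r$; this is the elementary estimate that \eqref{eq:martens} refines.

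To obtain \eqref{eq:martens} I would show the multiplication inequality is strict as soon as $C$ is non-hyperelliptic. The hypotheses $2\le d\le g-1$ and $0<2r\le d$ force $r\ge1$, hence $h^0(L)\ge2$ and $h^0(K_C\otimes L^{-1})=g-d+r\ge2$, so both factors have dimension at least two. Equality $\dim(A\cdot B)=\dim A+\dim B-1$ in the function field, with both factors of dimension $\ge2$, is extremely rigid: analysing it shows that $A$ and $B$ must be assembled from a single pencil, forcing a $g^1_2$ on $C$, i.e. hyperellipticity. Since $C$ is non-hyperelliptic this is excluded, whence $\operatorname{rank}\mu_0\ge g-d+2r+1$ and $\dim W^r_d\le d-2r-1$, proving \eqref{eq:martens}. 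Throughout I must track the base-point reduction of the first paragraph, since removing a base divisor of degree $b$ replaces $d$ by $d-b$ (with $2r\le d-b$ still holding by Clifford applied to the moving part), and one checks that the inductive bookkeeping preserves the stated inequality.

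Finally, for Mumford's refinement \eqref{eq:mumford} I would study the borderline case where Martens' bound is attained, i.e. $\dim X=d-2r-1$, so that $\operatorname{rank}\mu_0=g-d+2r+1$ is exactly one above its absolute minimum. The strategy is to classify the linear series $L$---equivalently, the geometry of the associated map $\phi_{|L|}\colon C\to\mathbb P^r$---for which $\mu_0$ attains this next-to-minimal rank. Such minimal-corank configurations are very rigid: a refined version of the equality analysis of the pencil trick shows they arise only when $C$ carries a pencil of very small degree or a plane model of very small degree, namely when $C$ is trigonal, bielliptic, or a smooth plane quintic; the extra hypothesis $d\le g-2$ is used to exclude the boundary degenerations $d\in\{g-1,g\}$ and to keep $K_C\otimes L^{-1}$ special. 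I expect the \emph{classification of this equality case} to be the main obstacle: the inequality $\dim(A\cdot B)\ge\dim A+\dim B-1$ is elementary, but pinning down precisely which curves realise the second-smallest value of $\operatorname{rank}\mu_0$ requires a Castelnuovo-type analysis of curves of minimal degree in $\mathbb P^r$ together with a monodromy/irreducibility argument to propagate the local information on $\mu_0$ to a global statement about $C$. Under the stated exclusions this gives $\dim W^r_d\le d-2r-2$, completing the proof.
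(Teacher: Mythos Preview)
The paper does not prove Theorem~\ref{thm:mm}; it is quoted verbatim from \cite[Theorems~IV.5.1 and IV.5.2]{ACGH} as background for the main argument. So there is no in-paper proof to compare against---only the standard reference.

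That said, your sketch has a genuine gap at the decisive point. The tangent-space identity $T_LW^r_d=(\operatorname{Im}\mu_0)^{\perp}$ and the Hopf-type bound $\operatorname{rank}\mu_0\ge h^0(L)+h^0(K_C\otimes L^{-1})-1$ do yield the Clifford estimate $\dim W^r_d\le d-2r$ cleanly. But your passage to \eqref{eq:martens} rests entirely on the assertion that equality $\dim(A\cdot B)=\dim A+\dim B-1$ ``forces a $g^1_2$''. What the equality case of the multiplication lemma actually gives (after the base-point reduction you describe) is that $|L|$ and $|K_C-L|$ are composed with a common pencil $|N|$; nothing in your argument pins the degree of $|N|$ to $2$. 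To conclude hyperellipticity one must exploit not just a single $L$ but the fact that equality holds along an entire $(d-2r)$-dimensional family in $W^r_d$, together with numerical constraints coming from $L\otimes(K_C\otimes L^{-1})\cong K_C$, and this is exactly the substance of Martens' theorem that you have relocated rather than proved. The same issue recurs, amplified, in your treatment of \eqref{eq:mumford}: you correctly identify the classification of the next-to-minimal rank case as ``the main obstacle'', but offer no mechanism for carrying it out.

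For comparison, the proof in \cite{ACGH} is not infinitesimal but global: for Martens one assumes a component of dimension $d-2r$, reduces by adding points to the self-dual range $d=g-1$ (where $L\mapsto K_C\otimes L^{-1}$ is an involution of $W^r_{g-1}$), and finishes with a direct dimension count forcing the hyperelliptic $g^1_2$; Mumford's refinement then analyses the boundary case $\dim W^r_d=d-2r-1$ by a similar reduction followed by a study of the resulting low-degree maps. The infinitesimal route you propose can be made to work, but it requires precisely the equality-case analyses you have left as assertions.
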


Finally, we recall the following useful lemma (see \cite[Lemma 3.5]{BP1}).

\begin{lemma}\label{lem:very ample}
Let $C$ be a smooth non-hyperelliptic curve of genus $g\geq 3$ and, for some effective divisor $D\in \Div(C)$, let $|D|$ be a $\mathfrak{g}^r_d$ such that $r\geq 2$ and $\dim|D-R|= 0$ for any $R\in C^{(r)}$.
Then $|D|$ is a very ample $\mathfrak{g}^{2}_{d}$ on $C$.
\end{lemma}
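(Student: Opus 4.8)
The statement has two parts: that $r=2$, and that $|D|$ is very ample. I would dispatch very ampleness via the standard criterion that a complete series $|L|$ of dimension $s$ on a smooth curve is very ample if and only if $\dim|L-R'|=s-2$ for every $R'\in C^{(2)}$ (this single condition simultaneously encodes base-point-freeness and the separation of points and tangent vectors). Granting $r=2$, the hypothesis $\dim|D-R'|=0=\dim|D|-2$ for all $R'\in C^{(2)}$ is exactly this criterion, so $|D|$ is automatically a very ample $\mathfrak{g}^2_d$. Thus the entire content is to prove $r=2$, and my plan is to rule out $r\geq 3$ by reducing to a plane model.

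So suppose $r\geq 3$, and fix a general $B\in C^{(r-2)}$. Since a general effective divisor of degree $r-2$ imposes independent conditions on the $\mathfrak{g}^r_d$ $|D|$, we have $\dim|D-B|=r-(r-2)=2$, so $|D-B|$ is a $\mathfrak{g}^2_{e}$ with $e:=d-r+2$. Applying the hypothesis to the divisors $R:=B+R'\in C^{(r)}$ gives $\dim|D-B-R'|=0$ for every $R'\in C^{(2)}$, so by the criterion above $|D-B|$ is very ample. Hence $\phi_{|D-B|}$ realizes $C$ as a smooth plane curve of degree $e$; in particular $g=\binom{e-1}{2}$, which together with $g\geq 3$ forces $e\geq 4$, and the adjunction formula gives $K_C\sim (e-3)(D-B)$ in $\Pic(C)$.

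The contradiction now comes from letting $B$ vary. Since the relation $(e-3)(D-B)\sim K_C$ holds for all $B$ in a dense open subset of $C^{(r-2)}$, the class $(e-3)[B]$ is constant there. But $e-3\geq 1$, so multiplication by $e-3$ is an isogeny of $\Pic^0(C)$ with finite kernel; as $[B]-[B_0]$ varies continuously in $\Pic^0(C)$ while taking values in the finite $(e-3)$-torsion subgroup, it must be constant, i.e. $B\mapsto[B]$ is constant on $C^{(r-2)}$. This contradicts the non-constancy of the Abel--Jacobi map $C^{(r-2)}\to\Pic^{r-2}(C)$, guaranteed by $r-2\geq 1$. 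Therefore $r=2$, and very ampleness follows as explained.

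The main obstacle is the middle step: extracting from very ampleness of $|D-B|$ the genuinely rigid datum $K_C\sim(e-3)(D-B)$, i.e. recognizing $C$ as a smooth plane curve and invoking adjunction. I also expect to spend a little care verifying $\dim|D-B|=2$ for general $B$ (that a general degree-$(r-2)$ divisor imposes independent conditions on the $\mathfrak{g}^r_d$) and checking the boundary bookkeeping that keeps $e-3\geq 1$. Once these are in place, the isogeny argument closes the proof cleanly, with no appeal to Martens' theorem needed.
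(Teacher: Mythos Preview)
Your proof is correct. The paper does not give its own proof of this lemma but merely cites \cite[Lemma~3.5]{BP1}, so there is nothing in the present paper to compare your argument against directly. Your strategy---reducing to $r=2$ by showing that for general $B\in C^{(r-2)}$ the residual series $|D-B|$ is a very ample $\mathfrak{g}^2_{d-r+2}$, then using adjunction on the resulting smooth plane model to force $(e-3)[B]$ to be constant and hence the Abel--Jacobi map on $C^{(r-2)}$ to be constant---is clean and self-contained. The two bookkeeping points you flag (that a general $B$ of degree $r-2$ imposes independent conditions on a $\mathfrak{g}^r_d$, and that $g\geq 3$ forces $e\geq 4$ so $e-3\geq 1$) are handled correctly, and the passage from ``$(e-3)[B]$ constant on a dense open'' to ``$[B]$ constant'' via the finiteness of the $(e-3)$-torsion in $\Pic^0(C)$ is valid.
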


\medskip
\section{Proof of Theorem \ref{thm:covfam}}\label{section:proof}
In this section, we are aimed at proving Theorem \ref{thm:covfam} and Corollary \ref{cor:conngon}.
In the light of Remark \ref{rem:covfam}, the assertion of Theorem \ref{thm:covfam} can be rephrased in terms of covering families on $C^{(k)}$, as follows.

\begin{theorem}\label{thm:covfam2}
Let $k\in\{2,3,4\}$ and let $C$ be a smooth complex projective curve of genus $g\geq k+4$.
Suppose that $C$ is not hyperelliptic, biellliptic, or a smooth plane quintic.\\
Let $\mathcal{E}\stackrel{\pi}{\longrightarrow} T$ be a smooth family of curves, endowed with a dominant morphism $f\colon \mathcal{E}\longrightarrow C^{(k)}$ such that for general $t\in T$, the fiber $E_t:=\pi^{-1}(t)$ is an irreducible curve with $\gon(E_t)=\gon(C)$ and the restriction $f|_{E_t}\colon E_t\longrightarrow C^{(k)}$ is birational onto its image.
Then  
$$E_t\cong f(E_t)=C_Q:=\left\{\left.Q+p\in C^{(k)}\right|p\in C\right\}$$ 
for some point $Q\in C^{(k-1)}.$
\end{theorem}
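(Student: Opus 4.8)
The plan is to follow closely the strategy of \cite[Theorem 1.1]{BP1}: realise the covering family on the canonical curve of $C$, extract from the special-position condition a linear series on $C$ whose numerical invariants are forced to be exceptionally small, and then eliminate every configuration except the expected one by means of the Martens--Mumford bounds. So I would first fix the canonical embedding $\phi\colon C\longhookrightarrow\mathbb{P}^{g-1}$ and the rational map $\gamma\colon C^{(k)}\dashrightarrow\mathbb{G}(k-1,g-1)$ of \eqref{diagram:gauss}, so that $\gamma(P)=\Lambda_P:=\Span\big(\phi(\Supp P)\big)$ for general $P\in C^{(k)}$ (note $h^0(P)=1$ for general $P$, so $\Lambda_P$ is a genuine $(k-1)$-plane). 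Since $C$ is not hyperelliptic, $d:=\gon(C)\geq 3$, while Brill--Noether gives $d\leq\lceil (g+2)/2\rceil$; recall also $g\geq k+4$ and $k\leq 4$. It suffices to prove that for general $t\in T$ one has $f(E_t)=C_Q$ for some $Q\in C^{(k-1)}$: then $E_t\cong C_Q\cong C$ is automatic, and since $f(E_t)\subseteq C_Q$ are both irreducible curves the inclusion is forced to be an equality.

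Fixing such a general $t$, I would write $E:=E_t$ with its $d$-gonal map $\varphi_t\colon E\to\mathbb{P}^1$ from \eqref{diagram:covgon}, take a general fibre $\varphi_t^{-1}(y)=\{x_1,\dots,x_d\}$, and set $P_i:=f(x_i)$ and $\Lambda_i:=\gamma(P_i)$. As $t$ and $y$ are general, each $P_i$ is a general point of $C^{(k)}$, hence supported on $k$ distinct points spanning the $(k-1)$-plane $\Lambda_i$, and by Theorem \ref{thm:covfamCk} the planes $\Lambda_1,\dots,\Lambda_d$ are $\SP(g-1-k)$. Next I would set $\Sigma:=\Span(\Lambda_1,\dots,\Lambda_d)$ and $D:=\Sigma\cap C$, an effective divisor of degree $d':=\deg D$ with $\Sigma=\Span(\phi(D))$; by the geometric Riemann--Roch theorem $\dim|D|=d'-1-\dim\Sigma$. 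Choosing an optimal partition of $\{1,\dots,d\}$ into $m$ indecomposable $\SP(g-1-k)$ blocks and invoking Theorem \ref{thm:dimSP}, one obtains $\dim\Sigma\leq d+k-3+(m-1)(k-2)$, hence $|D|$ is a $\mathfrak{g}^{r}_{d'}$ with $r\geq d'-d-k+2-(m-1)(k-2)$. In the target configuration $f(E)=C_Q$ one has $m=1$, $d'=d+k-1$ and $r=1$, with this inequality an equality; the whole point is that no other configuration survives.

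To carry this out I would run through the cases $k\in\{2,3,4\}$, the partition type $m$, and the amount of overlap among the supports of the $P_i$ (equivalently the value of $d'$), and in each case confront the lower bound on $r$ with the Martens--Mumford inequalities of Theorem \ref{thm:mm} --- applied to $|D|$ when $d'\leq g-1$ and to the residual series $|K_C-D|$ when $d'\geq g-1$. Here the hypotheses that $C$ is not hyperelliptic, bielliptic, or a smooth plane quintic enter; the trigonal case, which is \emph{not} excluded, is treated with the Martens bound alone. When the supports overlap I would project away the common points and control the $\SP$ property of the images via Lemma \ref{lem:sottoSP}, and I would rule out the borderline plane-curve configurations with Lemma \ref{lem:very ample}. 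Once every configuration violating these bounds is discarded, the only survivor is the one in which all $\Lambda_i$ pass through a common $(k-2)$-plane $\Span(\phi(Q))$ with $Q=q_1+\dots+q_{k-1}$ and $P_i=Q+p_i$ for every $i$; equivalently the fibre $\{P_1,\dots,P_d\}$ lies on $C_Q$ and $\{p_1,\dots,p_d\}$ is a fibre of a $\mathfrak{g}^1_d$ on $C$.

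Finally I would globalise: since $\{P_1,\dots,P_d\}$ sweeps out $f(E)$ as $y$ ranges over $\mathbb{P}^1$, the assignment $y\mapsto Q(y)$ defines a morphism $\mathbb{P}^1\to C^{(k-1)}$, and using that $f(E)$ is an irreducible curve, that $f|_E$ is birational onto its image, and a dimension count against $\dim T=k-1$, one shows $Q(y)$ is constant; hence $f(E)\subseteq C_Q$ and so $f(E)=C_Q$, which proves Theorem \ref{thm:covfam2} (and Corollary \ref{cor:conngon} then follows, the family of the curves $C_Q$ having $(k-1)$-dimensional base, too small to compute $\conngon(C^{(k)})$). I expect the genuine obstacle to be the case analysis of the third paragraph: the number of configurations to eliminate grows with $k$, the overlapping-support subcases demand repeated use of Lemma \ref{lem:sottoSP} together with careful bookkeeping of degrees and dimensions, and the trigonal curves must be dealt with without Mumford's refinement. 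A secondary subtlety is the globalisation step, where one must exclude that the auxiliary point $Q$ genuinely varies with the fibre.
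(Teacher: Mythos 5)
Your setup coincides with the paper's (canonical model, the map $\gamma$, the $\SP(g-1-k)$ property from Theorem \ref{thm:covfamCk}, the span bound of Theorem \ref{thm:dimSP} fed into geometric Riemann--Roch, and Brill--Noether case work via Theorem \ref{thm:mm}, Lemma \ref{lem:sottoSP} and Lemma \ref{lem:very ample}), but the core of your plan has a genuine gap: it is not true that the fibrewise case analysis leaves, as ``only survivor'', the configuration in which all $\Lambda_i$ contain a common $(k-2)$-plane spanned by some $Q\in C^{(k-1)}$. Already for $k=2$, a fibre whose $2d$ supporting points have exactly one coincidence spans at most a $\mathbb{P}^{d-1}$ and hence only yields a $\mathfrak{g}^{d-1}_{2d-1}$, which neither Clifford, nor Martens--Mumford, nor the generality of the $P_i$ excludes (divisors inside such a series already move in dimension $d-1\geq 2$); analogous partially-overlapping configurations exist for $k=3,4$. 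The paper does not exclude these fibrewise: it first shows (its Case A) that the $p_j$ cannot all be distinct, and then argues at the level of the family. For each multiplicity $\alpha$ it considers the stratum $N_\alpha$ of points of multiplicity $\alpha$ in $D_{(t,y)}$ and the induced map $\xi_{\alpha,t}\colon\{t\}\times\mathbb{P}^1\dashrightarrow C^{(s)}$; composing with the Abel--Jacobi map and using that $J(C)$ contains no rational curves, either some $\xi_{\alpha,t}$ is constant---whence $f(E_t)\subset\bigcap_j\bigl(q_j+C^{(k-1)}\bigr)$, which gives the conclusion at once when $s=k-1$ and by induction on $k$ (the pullback family covers $Q+C^{(k-s)}$ and the already-proved cases $k=2,3$ apply) when $s\leq k-2$---or every $\xi_{\alpha,t}$ is non-constant, which forces $|N_\alpha|\geq\gon(C)$ and, by irreducibility of $E_t$, that $\alpha|N_\alpha|$ is a multiple of $d$; only after these constraints does one land on ten explicit configurations, which are then killed by the kind of arguments you list (Lemma \ref{lem:d-k}, projections via Lemma \ref{lem:sottoSP}, Theorem \ref{thm:mm}, Castelnuovo's bound, Lemma \ref{lem:very ample}).

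Relatedly, you treat the constancy of the auxiliary point $Q$ as a secondary globalisation issue to be settled by ``a dimension count against $\dim T=k-1$'', but this is precisely the engine of the proof, and the proposed count does not work: if $Q(y)$ varied with $y$, the curve $f(E_t)$ would simply sweep inside the surface $\bigcup_y C_{Q(y)}$, and no contradiction with $\dim T$ arises. The correct mechanism is again the Abel--Jacobi/no-rational-curves argument applied to the multiplicity strata (a non-constant map $\{t\}\times\mathbb{P}^1\dashrightarrow C^{(s)}$ would force $s\geq\gon(C)$, impossible for the small common part), combined with the induction on $k$, which the paper also needs inside several configuration lemmas via residuation maps $f(E_t)\dashrightarrow C^{(2)},C^{(3)}$ and the equality $\covgon(C^{(2)})=\covgon(C^{(3)})=\gon(C)$. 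Without the Abel--Jacobi constancy, the divisibility constraint coming from irreducibility of $E_t$, and the induction on $k$, the case analysis you envisage cannot be closed.
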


Therefore, in order to achieve Theorem \ref{thm:covfam}, we prove Theorem \ref{thm:covfam2}.

\begin{proof}[Proof of Theorem \ref{thm:covfam2}]
Since $C$ is non-hyperelliptic, we can identify $C$ with its canonical model in $\mathbb{P}^{g-1}$, in order to simplify the notation.

Let $\mathcal{E}\stackrel{\pi}{\longrightarrow} T$ be a covering family of $d$-gonal curves on $C^{(k)}$ as above, with $d=\gon(C)\geq 3$. 
Using notation as in \eqref{diagram:covgon}, we consider the $d$-gonal map $\varphi_t\colon E_t \longrightarrow \{t\}\times\mathbb{P}^1$, and for a general point $(t,y)\in T\times \mathbb{P}^1$, let $\varphi_t^{-1}(t,y)=\left\{x_1,\ldots,x_d\right\}\subset E_t$ be its fiber.
Then, for any $i=1,\dots,d$, we define $P_i:=f_t(x_i)\in C^{(k)}$ and we set
\begin{equation}\label{eq:P_i}
P_1=p_1+\dots+p_{k} \quad P_2=p_{k+1}+\dots+p_{2k} \quad \dots\, \quad P_d=p_{(d-1)k+1}+\dots+p_{dk}.
\end{equation}
It follows from the generality of $(t,y)\in T\times \mathbb{P}^1$ that the points $P_i\in C^{(k)}$ are distinct, and they lie outside the diagonal of $C^{(k)}$.
Furthermore, for each $i=1,\dots,d$, we consider the linear space spanned by $\Supp (P_i)$,
\begin{equation}\label{eq:Lambda_i}
\Lambda_i:=\Span\left(p_{(i-1)k+1},\dots,p_{ik}\right)\subset \mathbb{P}^{g-1},
\end{equation}
which has dimension $k-1$ and is parameterized by the point $\gamma(P_i)\in \mathbb{G}(k-1,g-1)$, where $\gamma$ is the map in \eqref{diagram:gauss}.
Thus Theorem \ref{thm:covfamCk} assures that the $(k-1)$-planes $\Lambda_1,\dots,\Lambda_d$ are in special position with respect to $(g-1-k)$-planes.

Let us define the effective divisor 
\begin{equation}\label{eq:divisorDbis}
D=D_{(t,y)}:=p_1+\dots +p_{dk}=\sum_{j=1}^h n_jq_j \in\Div(C),
\end{equation}
where the points $q_j\in \{p_1,\dots,p_{dk}\}$ are assumed to be distinct and $n_j=\mult_{q_j}(D)$.
We distinguish some cases depending on the values of the integers $n_j$.

\medskip
\underline{Case A:} suppose that $n_j=1$ for any $j=1,\dots,h$, that is the points $p_1,\dots,p_{dk}$ are all distinct.
We consider the sequence $\Lambda_1,\dots,\Lambda_d\subset \mathbb{P}^{g-1}$ of $(k-1)$-planes satisfying $\SP(g-1-k)$, together with a partition of the set of indexes $\left\{1,\dots,d\right\}$ such that each part $\{i_1,\dots,i_t\}$ corresponds to an indecomposable sequence $\Lambda_{i_1},\dots,\Lambda_{i_t}$ of $(k-1)$-planes that satisfy $\SP(g-1-k)$, as in the assumption of Theorem \ref{thm:dimSP}.
\begin{claim}\label{claim:almeno3}
Each part $\{i_1,\dots,i_t\}$ consists of at least $3$ elements. 
\begin{proof}[Proof of Claim \ref{claim:almeno3}]
By contradiction, suppose that there are two $(k-1)$-planes, say $\Lambda_{1}=\Span(p_1,\dots,p_k)$ and $\Lambda_{2}=\Span(p_{k+1},\dots,p_{2k})$, that satisfy $\SP(g-1-k)$.
Then $\Lambda_{1}$ and $\Lambda_{2}$ coincide (cf. Remark \ref{rem:unionSP}), so that $\dim\Span(p_1,\dots,p_{2k})=k-1$.
Setting $P=p_1+\dots+p_{2k}\in \Div(C)$, the geometric version of the Riemann--Roch theorem (see \cite[p.\,12]{ACGH}) yields
\begin{equation}\label{eq:GRR1}
\dim |P|=\deg P-1-\dim\Span(p_1,\dots,p_{2k})=2k-1-(k-1)=k=\frac{\deg D}{2}.
\end{equation}
Since $g\geq k+4$, we have $\deg P=2k\leq 2(g-4)$.
Thus Clifford's theorem (cf. \cite[p.\,107]{ACGH}) assures that either $P=0$, $P$ is a canonical divisor of $C$, or $C$ is hyperelliptic.
Then we get a contradiction, as $0<\deg P<2g-2$ and $C$ is assumed to be non-hyperelliptic.
\end{proof}
\end{claim}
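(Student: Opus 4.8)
The plan is to argue by contradiction: assuming some part of the partition has at most two elements, I would rule out the possibilities of size $1$ and size $2$ separately, using elementary projective geometry for the former and Clifford's theorem for the latter.

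First I would dispose of size $1$. A part consisting of a single plane $\Lambda_j$ must, by the hypothesis of Theorem \ref{thm:dimSP}, itself be an $\SP(g-1-k)$ sequence. Unwinding Definition \ref{def:SP} in the case $d=1$, the condition on the auxiliary $(g-1-k)$-plane becomes vacuous, and so the definition forces \emph{every} $(g-1-k)$-plane of $\mathbb{P}^{g-1}$ to meet $\Lambda_j$. But $\dim\Lambda_j=k-1$ and $(k-1)+(g-1-k)=g-2<g-1$, so a general $(g-1-k)$-plane is disjoint from $\Lambda_j$; this contradiction shows that no part is a singleton, and it remains to exclude parts of size $2$.

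Next I would treat size $2$. After relabeling, suppose $\{1,2\}$ is a part, so that $\Lambda_1=\Span(p_1,\dots,p_k)$ and $\Lambda_2=\Span(p_{k+1},\dots,p_{2k})$ satisfy $\SP(g-1-k)$. By Remark \ref{rem:unionSP}, two $(k-1)$-planes are $\SP(g-1-k)$ precisely when they coincide, hence $\Lambda_1=\Lambda_2$. In Case A the $2k$ points $p_1,\dots,p_{2k}$ are distinct and all lie on this common $(k-1)$-plane, so $\dim\Span(p_1,\dots,p_{2k})=k-1$. Writing $P=p_1+\dots+p_{2k}$ and invoking the geometric Riemann--Roch theorem, I would obtain $\dim|P|=(2k-1)-(k-1)=k=\tfrac{1}{2}\deg P$, so that $|P|$ is a special linear series attaining equality in Clifford's inequality.

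Finally I would extract the contradiction from Clifford's theorem. Since $g\geq k+4$ and $k\geq 2$, we have $0<\deg P=2k<2g-2$, so $P$ is neither zero nor canonical; equality in Clifford then forces $C$ to be hyperelliptic, against our standing assumption, and the claim follows. I expect the size-$2$ step to be the only delicate point: one must use the distinctness of the points in Case A to ensure that the span has dimension \emph{exactly} $k-1$, so that Riemann--Roch produces precisely the extremal value $\dim|P|=\deg P/2$, and one must check that $0<2k<2g-2$ so that Clifford is applied in its sharp boundary form rather than vacuously.
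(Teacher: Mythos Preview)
Your proof is correct and follows the same route as the paper for the size-$2$ case: coincidence of the two planes via Remark~\ref{rem:unionSP}, geometric Riemann--Roch to get $\dim|P|=\tfrac{1}{2}\deg P$, and then Clifford's equality case. The only difference is that you also treat singleton parts explicitly; the paper skips this because its standing convention in Section~\ref{sub:sp} is $d\geq 2$, so a part of size~$1$ is excluded by fiat, but your argument for that case is correct and does no harm.
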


Let $m$ denote the number of parts $\{i_1,\dots,i_t\}$. 
Then $m\leq \left\lfloor\frac{d}{3}\right\rfloor$ by Claim \ref{claim:almeno3}, and Theorem \ref{thm:dimSP} gives
$$
\dim\Span(p_1,\dots,p_{dk})  =\dim\Span\left(\Lambda_1,\dots,\Lambda_d\right)\leq d+k-3 +(m-1)(k-2).
$$
Therefore, the geometric version of the Riemann--Roch theorem implies
\begin{equation}\label{eq:GRR2}
\begin{split}
\dim |D|&= \deg D-1-\dim\Span(p_1,\dots,p_{dk})\geq dk-1-\big(d+k-3 +(m-1)(k-2)\big) \\
& =  (k-1)d -(k-2)m.
\end{split}
\end{equation}

If $k=2$, we obtain that $\dim |D|\geq d=\frac{\deg D}{2}$, so that $\deg D\geq 2g-2$ by Clifford's theorem.
On the other hand, 
\begin{equation}\label{eq:maxgon}
d=\gon(C)\leq \left\lfloor\frac{g+3}{2}\right\rfloor
\end{equation}
(see e.g. \cite[Theorem V.1.1]{ACGH}) and hence $\deg D\leq 2\left\lfloor\frac{g+3}{2}\right\rfloor<2g-2$ for any $g\geq 6$, a contradiction.

If $k=3$, then \eqref{eq:GRR2} yields $\dim |D|\geq 2d -\left\lfloor\frac{d}{3}\right\rfloor>\frac{\deg D}{2}$. 
Thus $\deg D\geq 2g$ by Clifford's theorem and $\dim |D|=3d-g$ by the Riemann--Roch theorem.
Combining this fact and \eqref{eq:GRR2}, we obtain $g\leq d+m\leq d+\left\lfloor\frac{d}{3}\right\rfloor$, which fails for any $g\geq 6$ by \eqref{eq:maxgon}.

Similarly, if $k=4$, we obtain $\dim |D|\geq 3d -2\left\lfloor\frac{d}{3}\right\rfloor>\frac{\deg D}{2}$.
Therefore the Riemann--Roch theorem and \eqref{eq:GRR2} give $\dim |D|=4d-g$ and $\dim |D|\geq 3d-2m$, respectively.
Thus $g\leq d+2m$.
Using $m\leq \left\lfloor\frac{d}{3}\right\rfloor$ and \eqref{eq:maxgon}, it is easy to check that $g\leq d+2m$ if and only if $g\in\{9,10,11,15\}$ with $d=\left\lfloor\frac{g+3}{2}\right\rfloor$ and $m=\left\lfloor\frac{d}{3}\right\rfloor$, i.e. $(g,d,m)\in \{(9,6,2),(10,6,2),(11,7,2),(15,9,3)\}$. 
As each of the $m$ parts contains at least 3 linear spaces among $\Lambda_1,\dots,\Lambda_d$, we deduce that for any triple $(g,d,m)$, there exists a part consisting of exactly 3 linear spaces, say $\Lambda_1,\Lambda_2,\Lambda_3$, which are $\SP(g-5)$.
By Theorem \ref{thm:dimSP}, the linear span of $\Lambda_1,\Lambda_2,\Lambda_3$ has dimension at most 4, and according to \eqref{eq:Lambda_i}, it contains the points $p_1,\dots,p_{12}$.
Using the geometric version of the Riemann--Roch theorem, we obtain that the linear series $|p_1+\dots+p_{12}|$ is a complete $\mathfrak{g}^r_{12}$ with $r\geq 7$.
However, this contradicts Clifford's theorem as $2r>12$, but $12<2g$ for any $g\in\{9,10,11,15\}$.

It follows that Case A does not occur and the points $p_1,\dots,p_{dk}$ are not distinct
 
\medskip
\underline{Case B:} suppose that the points $p_1,\dots,p_{kd}$ are not distinct, that is the integers $n_j$ are not all equal to $1$.
For any integer $\alpha$, let us consider the set  
\begin{equation}\label{eq:N_alpha}
N_{\alpha}:=\left\{\left. q_j\in\Supp(D)\right|n_j=\alpha\right\}, 
\end{equation}
consisting of the points supporting $D$ such that $\mult_{q_j}D=\alpha$. 
For some $\alpha\geq 1$ such that $N_{\alpha}\neq \emptyset$, we may assume that $N_{\alpha}=\{q_1,\dots,q_s\}$, with $s\leq \left\lfloor\frac{\deg D}{\alpha}\right\rfloor=\left\lfloor\frac{kd}{\alpha}\right\rfloor$.

We note that, since $T\times \mathbb{P}^1$ is irreducible, there exists a suitable open subset $U\subset T\times \mathbb{P}^1$ such that, as we vary $(t,y)\in U$, the multiplicities $n_j$ of the points supporting the divisor $D=D_{(t,y)}$ given by \eqref{eq:divisorDbis} do not vary, i.e. the cardinality of each $N_{\alpha}$ is constant on $U$; otherwise, we could distinguish irreducible components of $T\times \mathbb{P}^1$ depending on the configuration of the divisor $D=D_{(t,y)}$.     
Then, we can define a rational map $\xi_{\alpha}\colon T\times \mathbb{P}^1\dashrightarrow C^{(s)}$ which sends a general point $(t,y)\in U$ to the effective divisor $Q:=q_1+\dots+q_s\in C^{(s)}$ such that $N_{\alpha}=\{q_1,\dots,q_s\}$. 

For a general $t\in T$, let us consider the map
\begin{equation*}
\xymatrix{\{t\}\times\bP^1  \ar@{-->}[r]^-{\xi_{\alpha,t}} & C^{(s)} \ar[r]^-{u} & J(C),\\}
\end{equation*}
obtained by composing the restriction $\xi_{\alpha,t}\colon \{t\}\times \mathbb{P}^1\dashrightarrow C^{(s)}$ of $\xi_\alpha$ to $\{t\}\times \mathbb{P}^1$, and the  Abel--Jacobi map $u\colon C^{(s)}\longrightarrow J(C)$.
Since $J(C)$ is an Abelian variety, it does not contain rational curves. 
Thus the map $u\circ \xi_{\alpha,t}$ must be constant.
In the following, we distinguish two cases and, with a great deal of work, we prove that $\xi_{\alpha,t}$ is necessarily constant for some $\alpha$ such that $N_\alpha\neq \emptyset$.

\medskip
\hspace{1cm}\underline{Case B.1:} suppose that for any $\alpha$ such that $N_{\alpha}\neq\emptyset$, the map $\xi_{\alpha,t}\colon\{t\}\times \mathbb{P}^1\dashrightarrow C^{(s)}$ is non-constant, with $N_{\alpha}=\{q_1,\dots,q_s\}$.
Thus $\xi_{\alpha,t}\left(\{t\}\times\bP^1\right)$ is a rational curve on $C^{(s)}$, and Abel's theorem ensures that $|q_1+\dots+q_s|$ is a $\mathfrak{g}^r_s$ containing $\xi_{\alpha,t}\left(\{t\}\times\bP^1\right)$, so that $r\geq 1$ and $s=|N_\alpha|\geq d= \gon(C)$.
Then $\alpha\leq k$, because otherwise we would have $s\leq \left\lfloor\frac{kd}{\alpha}\right\rfloor< d=\gon(C)$.

We claim that $\alpha |N_\alpha|=\alpha s$ is multiple of $d$. 
To see this fact, let $i=1,\dots,d$ and consider the points $P_i=p_{(i-1)k+1}+\dots+p_{ik}$ in \eqref{eq:P_i}. 
We note that the number of points $p_j\in N_\alpha\cap \Supp(P_i)$ does not depend on $i$; otherwise, as we vary $(t,y)\in \{t\}\times \mathbb{P}^1$, the points $P_i$ would describe different irreducible components of $E_t$, but such a curve is irreducible. 
Thus $\alpha|N_\alpha|$---which is the number of points among $p_1,\dots,p_{dk}$ contained in $N_\alpha$---must be a multiple of the number of points $P_i$, which is $d$.

Furthermore, we have that $\deg D=kd=\sum_{\alpha=1}^k \alpha|N_\alpha|$ and there exists $\beta>1$ such that $N_\beta\neq \emptyset$ by assumption of Case B.
By using these facts and working out the combinatorics, it is easy to see that only the following cases may occur:
\begin{itemize}
  \item[(i)] $k=2$, $|N_2|=d$ and $N_1=\emptyset$;
  \item[(ii)] $k=3$, $|N_1|=|N_2|=d$ and $N_3=\emptyset$;
  \item[(iii)] $k=3$, $|N_3|=d$ and $N_1=N_2=\emptyset$;
  \item[(iv)] $k=3$, $|N_2|=\frac{3}{2}d$ and $N_1=N_3=\emptyset$;
  \item[(v)] $k=4$, $|N_1|=|N_3|=d$ and $N_2=N_4=\emptyset$;
  \item[(vi)] $k=4$, $|N_1|=2d$, $|N_2|=d$ and $N_3=N_4=\emptyset$;
  \item[(vii)] $k=4$, $|N_1|=d$, $|N_2|=\frac{3}{2}d$ and $N_3=N_4=\emptyset$;
  \item[(viii)] $k=4$, $|N_4|=d$, and $N_1=N_2=N_3=\emptyset$;
  \item[(ix)] $k=4$, $|N_3|=\frac{4}{3}d$ and $N_1=N_2=N_4=\emptyset$;
  \item[(x)] $k=4$, $|N_2|=2d$, and $N_1=N_3=N_4=\emptyset$.
  \end{itemize}

\smallskip
We discuss these cases in a series of lemmas at the end of the proof.
In particular, it follows from Lemmas \ref{lem:i}--\ref{lem:viii}, \ref{lem:ix} and \ref{lem:x} below that cases (i)--(x) do not occur.
This shows that Case B.1 does not happen, and we conclude that the map $\xi_{\alpha,t}\colon\{t\}\times \mathbb{P}^1\dashrightarrow C^{(s)}$ is constant for some $\alpha$ such that $N_{\alpha}\neq\emptyset$.

\medskip
\hspace{1cm}\underline{Case B.2:} let $\alpha$ be such that $N_{\alpha}=\left\{q_1,\dots,q_s\right\}\neq\emptyset$ and the map $\xi_{\alpha,t}\colon\{t\}\times \mathbb{P}^1\dashrightarrow C^{(s)}$ is constant.
In particular, $Q=q_1+\dots+q_s$ is the only point in the image of $\xi_{\alpha,t}$.
It follows that for general $y\in\mathbb{P}^1$, the points $q_1,\dots,q_s\in C$ are contained in the support of the divisor $D=D_{(t,y)}$ in \eqref{eq:divisorDbis}.
Therefore, the fiber $\varphi_t^{-1}(t,y)=\{x_1,\dots,x_d\}\subset E_t$ is such that at least one of the points $P_i=f(x_i)$ lies on the $(k-1)$-dimensional subvariety $q_1+C^{(k-1)}\subset C^{(k)}$, at least one lies on $q_2+C^{(k-1)}\subset C^{(k)}$, and so on.
Since $(t,y)$ varies on an open subset of $\{t\}\times \mathbb{P}^1$ and $E_t$ is irreducible, we deduce that 
$$f(E_t)\subset \bigcap_{j=1}^s \left(q_j+C^{(k-1)}\right),$$ 
where the points $q_j$ are distinct.

We point out that $1\leq s\leq k-1$. Indeed, if $s\geq k$, then the intersection above would have dimension smaller than 1, and it could not contain the curve $f(E_t)$.

If $s=k-1$, then $Q=q_1+\dots+q_{k-1}$ and $f(E_t)\subset \bigcap_{j=1}^s \left(q_j+C^{(k-1)}\right)= q_1+\dots+q_{k-1}+C$.
Since both $f(E_t)$ and $C_Q:=q_1+\dots+q_{k-1}+C$ are irreducible curves, they do coincide.
In particular, this concludes the proof of Theorem \ref{thm:covfam2} for the case $k=2$ as $1\leq s\leq k-1$.

Therefore, we assume $k=\{3,4\}$ and $1\leq s\leq k-2$.
Then we consider the constant map $\xi_{\alpha,t}\colon\{t\}\times \mathbb{P}^1\dashrightarrow C^{(s)}$ such that $(t,y)\longmapsto Q$, and we define the map $\phi\colon T\dashrightarrow C^{(s)}$, sending a general $t\in T$ to the unique point $Q\in C^{(s)}$ in the image of $\xi_{\alpha,t}$.
As $f(E_t)\subset Q+C^{(k-s)}$ and $\cE\stackrel{\pi}{\longrightarrow}T$ is a covering family, the map $\phi$ is dominant, with general fiber $\phi^{-1}(Q)$ of dimension $\dim T-\dim C^{(s)}=k-s-1$.
Thus the pullback 
\begin{equation}\label{eq:pullback}
\cE\times_T \phi^{-1}(Q)\longrightarrow \phi^{-1}(Q)
\end{equation}
is a $(k-s-1)$-dimensional family of $d$-gonal curves $E_t$, with $f(E_t)\subset Q+C^{(k-s)}$ and $d=\gon(C)$. 
Moreover, since $\cE\stackrel{\pi}{\longrightarrow}T$ is a covering family, we conclude that the family \eqref{eq:pullback} covers $Q+C^{({k-s})}$. 

Assuming $(k,s)=(3,1)$ or $(k,s)=(4,2)$, and using the natural identification between $Q+C^{(2)}$ and $C^{(2)}$, it follows from the assertion for $k=2$ that for general $t\in \phi^{-1}(Q)$, there exists $q'\in C$ such that $f(E_t)=Q+q'+C$, where $Q\in C^{(s)}$, i.e. $f(E_t)=C_{Q+q'}$.
In particular, this concludes the proof of Theorem \ref{thm:covfam2} for $k=3$.

Finally, if $(k,s)=(4,1)$, we have that $Q=q_1\in C$, and the pullback family is a covering family of $d$-gonal curves for $q_1+C^{(3)}$.
Then the assertion in the case $k=3$ implies that the general fiber $f(E_t)$ has the form $f(E_t)=q_1+q'+q''+C$ for some $q'+q''\in C^{(2)}$, and this concludes the proof in the remaining case $k=4$.
\end{proof}

In order to complete the proof of Theorem \ref{thm:covfam2}, we need to discuss configurations (i)--(x) listed in Case B.1 above.
To this aim, we recall the following fact (see \cite[Claim 3.6]{BP1}) and, for the sake of completeness, we include its elementary proof.
\begin{lemma}\label{lem:d-k}
Consider the points $P_1,\dots,P_d$ in \eqref{eq:P_i}. 
Let $A\subset \{q_1,\ldots,q_s\}$ be a set of points and let $j\in \{1,\dots,d\}$ such that $A\cap\Supp(P_j)=\emptyset$, and $A\cap\Supp(P_i)\neq\emptyset$ for any $i\neq j$.
Then $$|A|\geq \gon(C)-k.$$
\begin{proof}
Consider the $(k-1)$-planes $\Lambda_1,\dots,\Lambda_d$ defined in \eqref{eq:Lambda_i} and the linear space $\Span(A)\subset \mathbb{P}^{g-1}$.
Of course, if $\dim\Span(A)> g-1-k$, then $\Span(A)\cap \Lambda_j\neq\emptyset $.
If instead $\dim\Span(A)\leq g-1-k$, then $\Span(A)$ intersects $\Lambda_j$ because $\Lambda_1,\dots,\Lambda_d$ are $\SP(g-1-k)$ and $A\cap\Lambda_i\neq\emptyset$ for any $i=1,\dots,\widehat{j},\dots,d$.
In any case $\Span(A)\cap \Lambda_j\neq \emptyset$, so that $\dim \Span(A,\Lambda_j)\leq \dim \Span(A)+\dim \Lambda_j\leq |A|+k-2$.
Moreover, $\Span(A,\Lambda_j)$ contains (at least) $|A|+k$ distinct points of $C$ (the elements of $A$ and the $k$ points supporting $P_j$).
By the geometric version of the Riemann--Roch theorem, those points define a $\mathfrak{g}^r_{|A|+k}$ on $C$, with $r\geq 1$.
Thus $|A|+k\geq \gon(C)$.
\end{proof}
\end{lemma}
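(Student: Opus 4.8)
The plan is to turn the hypothesis into a statement about a small linear span containing many points of $C$, and then to read off a pencil on $C$ via the geometric Riemann--Roch theorem. Set $\Gamma:=\Span(A)\subset\mathbb{P}^{g-1}$; since the elements of $A$ are among the \emph{distinct} points $q_1,\dots,q_s$ supporting $D$, we have $\dim\Gamma\le|A|-1$. The key point, from which the rest follows mechanically, is that $\Gamma$ meets $\Lambda_j$.

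To establish this I would split into cases according to $\dim\Gamma$. If $\dim\Gamma\ge g-k$, then $\dim\Gamma+\dim\Lambda_j\ge(g-k)+(k-1)=g-1$, so $\Gamma\cap\Lambda_j\ne\emptyset$ inside $\mathbb{P}^{g-1}$ for dimension reasons. Suppose then $\dim\Gamma\le g-1-k$. For each $i\ne j$ the hypothesis $A\cap\Supp(P_i)\ne\emptyset$ forces $\Gamma$ to contain a point of $\Supp(P_i)\subset\Lambda_i$, whence $\Gamma\cap\Lambda_i\ne\emptyset$. If $\dim\Gamma=g-1-k$, then $\Gamma$ itself is a $(g-1-k)$-plane meeting every $\Lambda_i$ with $i\ne j$, and Definition~\ref{def:SP} yields $\Gamma\cap\Lambda_j\ne\emptyset$ at once. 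If $\dim\Gamma<g-1-k$, I would argue by contradiction: assuming $\Gamma\cap\Lambda_j=\emptyset$, the projection away from $\Gamma$ sends $\Lambda_j$ to a $(k-1)$-plane of $\mathbb{P}^{g-2-\dim\Gamma}$, while the $(g-1-k)$-planes containing $\Gamma$ correspond to the $(g-k-2-\dim\Gamma)$-planes of $\mathbb{P}^{g-2-\dim\Gamma}$; since $(g-k-2-\dim\Gamma)+(k-1)<g-2-\dim\Gamma$, a general such plane avoids the image of $\Lambda_j$, i.e.\ a general $(g-1-k)$-plane $L\supseteq\Gamma$ satisfies $L\cap\Lambda_j=\emptyset$. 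But $L\supseteq\Gamma$ forces $L\cap\Lambda_i\ne\emptyset$ for every $i\ne j$, so $L$ contradicts the fact that $\Lambda_1,\dots,\Lambda_d$ are $\SP(g-1-k)$. Hence $\Gamma\cap\Lambda_j\ne\emptyset$ in all cases.

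Granting this, $\dim\Span(\Gamma,\Lambda_j)\le\dim\Gamma+\dim\Lambda_j\le(|A|-1)+(k-1)=|A|+k-2$. Since $P_j$ lies off the diagonal of $C^{(k)}$, its support consists of $k$ distinct points of $C$, and $A\cap\Supp(P_j)=\emptyset$, so the linear space $\Span(A,\Lambda_j)=\Span(\Gamma,\Lambda_j)$ contains $|A|+k$ distinct points of $C$. By the geometric version of the Riemann--Roch theorem, these points cut out on $C$ a complete linear series of degree $|A|+k$ and dimension $r=(|A|+k)-1-\dim\Span(\Gamma,\Lambda_j)\ge(|A|+k)-1-(|A|+k-2)=1$. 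Hence $C$ carries a $\mathfrak{g}^1_{|A|+k}$, so $\gon(C)\le|A|+k$, which is the claimed inequality.

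I expect the only step needing care to be the reduction, when $\dim\Span(A)<g-1-k$, from the defining property of $\SP(g-1-k)$---which is phrased for $(g-1-k)$-planes---to the possibly lower-dimensional span $\Span(A)$ itself; the generic-extension argument above handles it, and one could alternatively record once and for all that any linear subspace of dimension at most $g-1-k$ meeting all but one of the $\Lambda_i$ must meet the remaining one, and then quote that fact.
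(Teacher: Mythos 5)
Your proof is correct and follows essentially the same route as the paper: show $\Span(A)$ meets $\Lambda_j$, bound $\dim\Span(A,\Lambda_j)$ by $|A|+k-2$, and apply the geometric Riemann--Roch theorem to the $|A|+k$ distinct points it contains to produce a $\mathfrak{g}^r_{|A|+k}$ with $r\geq 1$, whence $\gon(C)\leq |A|+k$. The only difference is that your generic-extension argument for the case $\dim\Span(A)<g-1-k$ spells out a step the paper treats as immediate from the $\SP(g-1-k)$ property, which is a welcome (but not essential) clarification.
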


We use throughout the notation and the setting of Case B.1.

\begin{lemma}\label{lem:i}
Configuration (i) of Case B.1---that is $k=2$, $|N_2|=d$ and $N_1=\emptyset$---does not occur.
\begin{proof}
If $k=2$ and $|N_2|=d$, the divisor $D$ has the form $D=2(q_1+\dots+q_d)$.

Suppose that $d=3$, so that $|q_1+q_2+q_3|$ is a complete $\mathfrak{g}^1_3$ on $C$.
It follows from \eqref{eq:martens} that $C$ possesses finitely many $\mathfrak{g}^1_3$.
Hence we obtain a contradiction, because $q_1+q_2\in C^{(2)}$ is a general point, but triples $q_1+q_2+q_3$ defining a $\mathfrak{g}^1_3$ describe just a 1-dimensional locus.

Analogously, if $d=4$, then $|q_1+\dots+q_4|$ is a $\mathfrak{g}^1_4$ on $C$.
As $\gon(C)=4$ and $C$ is neither bielliptic nor a smooth plane quintic, \eqref{eq:mumford} implies that $C$ possesses finitely many $\mathfrak{g}^1_4$.
Thus we obtain a contradiction, as $q_1+q_2\in C^{(2)}$ is a general point, but 4-tuples $q_1+\dots+q_4$ defining a $\mathfrak{g}^1_4$ vary in a family of dimension 1.

Finally, suppose that $d\geq 5$. 
Given a point $x\in \{q_1,\dots,q_d\}$, it belongs to the support of two points of $C^{(2)}$, say $P_1$ and $P_2$. 
Then $\Supp(P_1)\cup\Supp(P_2)$ consists of 3 distinct points. 
As $d\geq 5$, there exists a point $y\in \{q_1,\dots,q_d\}\smallsetminus\{x\}$ belonging to the support of two other points of $C^{(2)}$, say $P_3$ and $P_4$.
Now, we can choose $n\leq d-5$ distinct points $a_1,\dots,a_n\in \{q_1,\dots,q_d\}\smallsetminus\{x,y\}$, such that the set $A:=\left\{x,y,a_1,\dots,a_n\right\}$ contains a point in the support of $P_i$ for any $i=1,\dots,d-1$ and $A\cap\Supp(P_d)=\emptyset$ (given $x$ and $y$, it suffices to choose---at most---one point in the support of any
$P_5,\dots, P_{d-1}$, avoiding the points of $\Supp(P_d)$).
Thus the set $A$ satisfies the assumption of Lemma \ref{lem:d-k}, but $|A|\leq d-3=\gon(C)-3$, a contradiction.
In conclusion, case (i) does not occur.
\end{proof}
\end{lemma}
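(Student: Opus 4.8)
The plan is to translate configuration~(i) into a combinatorial picture and then split into the three ranges $d=3$, $d=4$, and $d\ge 5$, which are exactly the thresholds where the Brill--Noether estimates of Theorem~\ref{thm:mm} become effective. In configuration~(i) the divisor \eqref{eq:divisorDbis} is $D=2(q_1+\dots+q_d)$, so $\Supp(P_i)\subseteq\{q_1,\dots,q_d\}$ for every $i$; since the $P_i\in C^{(2)}$ are pairwise distinct and lie off the diagonal, the incidence relation ``$q_j\in\Supp(P_i)$'' presents $\{q_1,\dots,q_d\}$ as the vertex set of a $2$-regular simple graph $G$ whose $d$ edges are the pairs $P_1,\dots,P_d$; hence $G$ is a disjoint union of cycles, each of length at least~$3$.

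For $d=3$ and $d=4$ I would argue by Brill--Noether theory. From the hypotheses of Case~B.1, applied with $\alpha=2$ and $s=|N_2|=d$, Abel's theorem gives that $|q_1+\dots+q_d|$ is a $\mathfrak{g}^1_d$ (it cannot be a $\mathfrak{g}^2_d$: for $d\in\{3,4\}$ this would force $g\le 3$ or $C$ hyperelliptic, against our hypotheses). Then Martens' inequality \eqref{eq:martens} gives $\dim W^1_3\le 0$ when $d=3$, while for $d=4$, using $g\ge 6$ and the hypotheses that $C$ is neither trigonal, bielliptic, nor a smooth plane quintic, Mumford's refinement \eqref{eq:mumford} gives $\dim W^1_4\le 0$. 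So $C$ has only finitely many $\mathfrak{g}^1_d$, and the pairs of points that complete to a divisor of one of them fill a locus of dimension at most~$1$ in $C^{(2)}$; but $\mathcal{E}\stackrel{\pi}{\longrightarrow}T$ is a covering family, so one of the pairs appearing in $D$---say $P_1=p_1+p_2$---is a general point of $C^{(2)}$, a contradiction.

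For $d\ge 5$ the idea is to build a set $A\subseteq\{q_1,\dots,q_d\}$ that meets $\Supp(P_i)$ for every $i=1,\dots,d-1$, is disjoint from $\Supp(P_d)$, and has $|A|\le d-3$; this contradicts Lemma~\ref{lem:d-k}, which applied to the $\SP(g-1-k)$ configuration $\Lambda_1,\dots,\Lambda_d$ with $j=d$ would force $|A|\ge\gon(C)-k=d-2$. In graph terms $A$ is a vertex cover of the graph obtained from $G$ by deleting the edge $P_d$, required to avoid the two endpoints of $P_d$; deleting $P_d$ unfolds the cycle through it into a path, and an elementary count bounds the least such cover by $\lfloor\ell/2\rfloor+\sum_i\lceil m_i/2\rceil\le d-2-r$, where $\ell$ is the length of the cycle containing $P_d$, the $m_i$ are the lengths of the remaining $r$ cycles, and $\ell+\sum_i m_i=d$. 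This is $\le d-3$ as soon as $r\ge 1$, and for $r=0$ (a single $d$-cycle) it equals $\lfloor d/2\rfloor\le d-3$ precisely because $d\ge 5$.

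The main obstacle, I expect, is this last step: one has to set up the combinatorics so that the economical vertex cover genuinely avoids $\Supp(P_d)$, the tight configuration being a single long cycle---which is also why the count degenerates for $d=3,4$ and those two values must instead be handled by the Brill--Noether argument above.
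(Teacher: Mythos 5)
Your proposal is correct and follows essentially the same route as the paper: the cases $d=3,4$ are ruled out exactly as in the text via Martens'/Mumford's bounds \eqref{eq:martens}--\eqref{eq:mumford} against the generality of a point $P_i\in C^{(2)}$, and the case $d\geq 5$ is ruled out by producing a set $A$ meeting $\Supp(P_i)$ for all $i\neq d$, disjoint from $\Supp(P_d)$, with $|A|\leq d-3$, contradicting Lemma \ref{lem:d-k}. The only difference is presentational: you organize the last step as a vertex-cover count on the disjoint union of cycles determined by the incidences $q_j\in\Supp(P_i)$ (a sound and slightly sharper bookkeeping), whereas the paper makes the same choice greedily; also, non-trigonality in the $d=4$ case is not a hypothesis but follows from $\gon(C)=4$, as you implicitly use.
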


We point out that Lemma \ref{lem:i} shows that Case B.1 does not occur for the second symmetric product of $C$. 
In particular, this fact completes the proof of Theorem \ref{thm:covfam2} for $k=2$.
Hence, in the proofs of the following lemmas discussing cases (ii)--(x), we may assume that Theorem \ref{thm:covfam2} holds for $k=2$.

\begin{lemma}\label{lem:ii}
Configuration (ii) of Case B.1---that is  $k=3$, $|N_1|=|N_2|=d$ and $N_3=\emptyset$---does not occur.
\begin{proof}
If $k=3$, $|N_1|=|N_2|=d$ and $N_3=\emptyset$, then $D=(c_1+\dots+c_d)+2(q_1+\dots+q_d)$, where $c_1,\dots,c_d,q_1,\dots,q_d\in C$ are distinct points.
Furthermore, when we proved that $\alpha|N_d|$ is a multiple of $d$, we saw that the number of points in $N_\alpha\cap\Supp(P_i)$ does not depend on $i$, so that any point $P_i$ has the form $P_i=c_i+q_{i_1}+q_{i_2}$, where $q_{i_1},q_{i_2}\in N_2=\{q_1,\dots,q_d\}$ are distinct points. 

We define the map 
$$\psi_t\colon f(E_t)\dashrightarrow C^{(2)}$$ 
sending $P_i=c_i+q_{i_1}+q_{i_2}$ to $\psi_t(P_i):=P_i-c_i=q_{i_1}+q_{i_2}$.
This map is clearly non-constant and its image is an irreducible curve on $C^{(2)}$, with 
$$\gon(\psi_t(f(E_t)))\leq \gon (E_t)=\gon(C).$$
Therefore, by varying $t\in T$, we obtain a family of irreducible curves covering $C^{(2)}$, because the curves $f(E_t)$ cover $C^{(3)}$. 
Thus $\gon(\psi_t(f(E_t)))=\gon(C)$ by \cite[Theorem 1.5]{B1}.

Moreover, the assertion of Theorem \ref{thm:covfam2} for $k=2$ implies that $\psi_t(f(E_t))=C_Q:=\left\{\left. Q+p\in C^{(2)}\right|p\in C\right\}$ for some point $Q\in C$.
In particular, for any $i=1,\dots,d$, the point $\psi_t(P_i)\in \psi_t(f(E_t))$ has the form $\psi_t(P_i)=q_{i_1}+q_{i_2}=q_{i_1}+Q$, but this is impossible because the points supporting $\psi(P_i)$ are elements of $N_2$, whereas $d\geq 3$.   
It follows that case (ii) does not occur. 
\end{proof}
\end{lemma}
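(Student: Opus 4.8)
The plan is to collapse configuration (ii) onto the already-settled case $k=2$ by discarding the simple (multiplicity-one) point of each fibre and landing in $C^{(2)}$. In configuration (ii) the divisor reads $D=(c_1+\dots+c_d)+2(q_1+\dots+q_d)$, where the $c_j\in N_1$ and the $q_j\in N_2$ are all distinct. The first step is to record, exactly as in Case B.1, that the number of points of $N_\alpha$ lying in $\Supp(P_i)$ is independent of $i$ (otherwise the $P_i$ would sweep out distinct components of the irreducible curve $E_t$). Since $|N_1|=|N_2|=d$ while there are $d$ triples $P_i$, this forces every $P_i$ to carry exactly one point of $N_1$ and two of $N_2$, so $P_i=c_i+q_{i_1}+q_{i_2}$ with $q_{i_1},q_{i_2}\in N_2$.

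Next I would introduce the forgetful rational map $\psi_t\colon f(E_t)\dashrightarrow C^{(2)}$ given by $P_i\longmapsto P_i-c_i=q_{i_1}+q_{i_2}$, which discards the simple point. It is well defined on a dense open subset of $f(E_t)$: by irreducibility of $E_t$ and the constancy recorded above, the multiplicity type of each of the three moving points of a general $P\in f(E_t)$ is locally constant along the curve, so the multiplicity-one point is singled out consistently. The map is clearly non-constant, its image $B_t:=\psi_t(f(E_t))\subset C^{(2)}$ is an irreducible curve, and since such a forgetful projection does not increase the gonality we have $\gon(B_t)\le\gon(E_t)=\gon(C)$.

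I would then let $t$ vary. Because the curves $f(E_t)$ cover $C^{(3)}$, their images $B_t$ sweep out $C^{(2)}$, so (after the usual base change and shrinking of Remark \ref{rem:covfam}) the $B_t$ form a covering family of curves on $C^{(2)}$; by \cite[Theorem 1.5]{B1} a general member has gonality at least $\covgon(C^{(2)})=\gon(C)$, whence $\gon(B_t)=\gon(C)$ and the family computes $\covgon(C^{(2)})$. The case $k=2$ of Theorem \ref{thm:covfam2}, available from Lemma \ref{lem:i}, now applies and gives $B_t=C_Q=\{Q+p\mid p\in C\}$ for some fixed $Q\in C$. Consequently every $\psi_t(P_i)=q_{i_1}+q_{i_2}$ contains the common point $Q$; as both coordinates lie in $N_2$ we get $Q\in N_2$ and $Q\in\Supp(P_i)$ for all $i=1,\dots,d$. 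But $\mult_Q(D)=2$ and the $P_i$ lie off the diagonal, so $Q$ can appear in at most two of the $d$ triples; this forces $d\le 2$, contradicting $d=\gon(C)\ge 3$ and ruling out configuration (ii).

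The step I expect to be most delicate is the gonality control, specifically the upper bound $\gon(B_t)\le\gon(C)$: this requires knowing that the forgetful projection $\psi_t$ does not raise the gonality, which amounts to checking that $\psi_t$ is generically birational onto its image $B_t$ (so that $\gon(B_t)=\gon(f(E_t))=\gon(C)$). One must also verify that the $B_t$ genuinely cover $C^{(2)}$, so that \cite[Theorem 1.5]{B1} can furnish the reverse inequality and pin $\gon(B_t)$ at $\gon(C)$. Both points rest on the irreducibility of $E_t$ and on the constancy of the intersection pattern $N_\alpha\cap\Supp(P_i)$ already used in Case B.1, keeping the argument self-contained.
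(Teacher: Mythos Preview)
Your proposal is correct and follows essentially the same route as the paper: both define the forgetful map $\psi_t\colon f(E_t)\dashrightarrow C^{(2)}$ dropping the $N_1$-point, obtain a covering family of $C^{(2)}$ with gonality $\gon(C)$, apply the $k=2$ case to force $B_t=C_Q$, and derive the contradiction from $Q\in N_2$ having multiplicity $2$ in $D$ while $d\geq 3$. Your final paragraph is more explicit than the paper in flagging that the upper bound $\gon(B_t)\leq\gon(C)$ is the delicate point (the paper simply asserts it), but the argument and its level of detail otherwise coincide.
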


\begin{lemma}\label{lem:iii}
Configuration (iii) of Case B.1---that is  $k=3$, $|N_3|=d$ and $N_1=N_2=\emptyset$---does not occur.
\begin{proof}
If $k=3$, $|N_3|=d$ and $N_1=N_2=\emptyset$, the divisor $D$ has the form $D=3(q_1+\dots+q_d)$.
In particular, each $q_j$ belongs to the support of three distinct points among $P_1,\dots,P_d\in C^{(3)}$.
We note that $d\neq 3$, otherwise we would have $P_1=P_2=P_3=q_1+q_2+q_3$.

\smallskip
If $d=4$, then $|q_1+\dots+q_4|$ is a $\mathfrak{g}^1_4$ and $C$ possesses finitely many $\mathfrak{g}^1_4$ by \eqref{eq:mumford}.
Hence we argue as in Lemma \ref{lem:i}, and we obtain a contradiction as $P_1=q_1+q_2+q_3\in C^{(3)}$ is a general point, but 4-tuples $q_1+\dots+q_4$ defining a $\mathfrak{g}^1_4$ describe just a 1-dimensional locus.

\smallskip
If $d=5$, then $|q_1+\dots+q_5|$ is a $\mathfrak{g}^1_5$ on $C$ and we argue analogously.
In particular, since $\gon(C)=5\leq \left\lfloor\frac{g+3}{2}\right\rfloor$, we deduce that $g\geq 7$. 
Hence \eqref{eq:mumford} implies that $C$ possesses at most a 1-dimensional family of $\mathfrak{g}^1_5$, so the locus described by $5$-tuples $q_1+\dots+q_5$ defining a $\mathfrak{g}^1_5$ has dimension 2.
Thus we get a contradiction as $P_1=q_1+q_2+q_3\in C^{(3)}$ varies on a threefold.

\smallskip
If $d=6$, we claim that there exist two points $x,y\in \{q_1,\dots,q_6\}$ such that $x+y\leq P_i$ for only one $i\in\{1,\dots,6\}$.
To see this fact, suppose---without loss of generality---that $q_1$ belongs to the support of $P_1,P_2,P_3$ and $\Supp(P_1+P_2+P_3)=\left\{q_1,\dots,q_r\right\}$ for some $4\leq r\leq 6$.
If there exists $j\in \{2,\dots,r\}$ such that $q_j$ lies on the support of only one point among $P_1,P_2,P_3$, then $q_1+q_j\leq P_i$ for only one $i\in\{1,2,3\}$, and we may set $(x,y)=(q_1,q_j)$.
If instead any $q_j$ belongs to the support of two points among $P_1,P_2,P_3$, then $\Supp(P_1+P_2+P_3)=\left\{q_1,\dots,q_4\right\}$, because the divisor $P:=P_1+P_2+P_3$ has degree 9, $\mult_{p_1}P=3$ and $\mult_{p_j}P\geq 2$ for any $2\leq j\leq r$.
Consider the point $q_5$ and, without loss of generality, suppose that $q_5\in \Supp(P_4)$.
As $s=d=6$, $\Supp(P_4)$ contains at least an element of $\left\{q_2,q_3,q_4\right\}$, say $q_2$. 
Since $q_2$ belongs also to the support of two points among $P_1,P_2,P_3$, we conclude that $q_2+q_5\leq P_i$ only for $i=4$, and we may set $(x,y)=(q_2,q_5)$.\\
We point out that $A:=\{x,y\}$ intersects all the sets $\Supp(P_1),\dots,\Supp(P_6)$ but one, because $x$ belongs to the support of $P_i$ and of two other points of $C^{(3)}$---say $P_2$ and $P_3$---and $y\in \Supp(P_i)$ belongs to the support of two points other than $P_2$ and $P_3$.
Hence $A$ satisfies the assumption of Lemma \ref{lem:d-k} with $|A|=2= d-4=\gon(C)-4$, a contradiction.

\smallskip
If $d=7$, we distinguish two cases.
Suppose that there exist distinct $x,x'\in\{q_1,\dots,q_7\}$ such that $\{x,x'\}$ is contained in the support of two points of $C^{(3)}$, say $P_1$ and $P_2$.
Therefore, denoting by $P_3$ the third point supported on $x$, we have that $\Supp(P_1+P_2+P_3)$ consists of at most 6 distinct elements of $\{q_1,\dots,q_7\}$.
Then there exists $y\in \{q_1,\dots,q_7\}\smallsetminus\Supp(P_1+P_2+P_3)$ such that $A=\{x,y\}$ satisfies the assumption of Lemma \ref{lem:d-k}, and we get a contradiction as $|A|=2= d-5<\gon(C)-3$.

If instead $x,x'$ as above do not exist, up to reordering indexes, the only admissible configuration for $P_1,\dots,P_7\in C^{(3)}$ is
\begin{align*}
& P_1=q_1+q_2+q_3,\,P_2=q_1+q_4+q_5,\,P_3=q_1+q_6+q_7,\,P_4=q_2+q_4+q_6,\\
& P_5=q_2+q_5+q_7,\,P_6=q_3+q_4+q_7,\,P_7=q_3+q_5+q_6.
\end{align*}
Then the set $A:=\{q_4,q_5,q_6\}$ satisfies the assumption of Lemma \ref{lem:d-k}, as $A\cap\Supp(P_i)=\emptyset$ if and only if $i=1$. 
However, $|A|=3=\gon(C)-4$, a contradiction.

\smallskip
Finally, suppose that $d\geq 8$.
Given a point $x\in \{q_1,\dots,q_d\}$, it belongs to the support of three points of $C^{(3)}$, say $P_1,P_2,P_3$. 
Then $\Supp(P_1+P_2+P_3)$ consists of at most 7 distinct elements of $\{q_1,\dots,q_d\}$.
As $d\geq 8$, we may consider a point $y\in \{q_1,\dots,q_d\}\smallsetminus\Supp(P_1+P_2+P_3)$ belonging to the support of three other points, say $P_4,P_5,P_6$.
Then we can choose $n\leq d-7$ distinct points $a_1,\dots,a_n\in \{q_1,\dots,q_d\}\smallsetminus\{x,y\}$, such that $A:=\left\{x,y,a_1,\dots,a_n\right\}$ satisfies the assumption of Lemma \ref{lem:d-k} and, being $|A|\leq d-5<\gon(C)-3$, we get a contradiction.

In conclusion, case (iii) does not occur. 
\end{proof}
\end{lemma}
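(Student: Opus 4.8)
\emph{Proof proposal for Lemma \ref{lem:iii}.} The plan is to derive a contradiction with Lemma \ref{lem:d-k} by producing a small set $A\subseteq\{q_1,\dots,q_d\}$ that meets all but one of the supports $\Supp(P_1),\dots,\Supp(P_d)$, after first disposing of a handful of small values of $d=\gon(C)$ by Brill--Noether dimension counts. First I would record the combinatorial picture. Since $D=3(q_1+\dots+q_d)$ with the $q_j$ pairwise distinct and each $P_i$ off the diagonal, every $q_j$ lies in the support of exactly three of the $P_i$, while each $\Supp(P_i)$ consists of three of the $q_j$; so the incidences form a $3$-regular, $3$-uniform configuration on $d$ points and $d$ blocks. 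In particular $d\neq 3$, since otherwise $P_1=P_2=P_3=q_1+q_2+q_3$, against the distinctness of the $P_i$.

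Next comes the routine part, the small cases. In Case B.1 the map $\xi_{\alpha,t}$ is nonconstant, so $|q_1+\dots+q_d|$ is a moving $\mathfrak{g}^1_d$ computing $\gon(C)$. If $d=4$, then \eqref{eq:mumford} gives only finitely many $\mathfrak{g}^1_4$ on $C$, so the $4$-tuples $q_1+\dots+q_4$ defining one sweep a $1$-dimensional locus, contradicting that $P_1=q_1+q_2+q_3$ is a general point of $C^{(3)}$. If $d=5$, then $g\geq 7$ by \eqref{eq:maxgon}, and \eqref{eq:mumford} bounds $\dim W^1_5\leq 1$, so the $5$-tuples defining a $\mathfrak{g}^1_5$ sweep a surface while $P_1=q_1+q_2+q_3$ should sweep a threefold, again impossible.

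The heart of the argument is the combinatorial step for $d\geq 6$: exhibit $A\subseteq\{q_1,\dots,q_d\}$ and an index $j$ with $A\cap\Supp(P_j)=\emptyset$, $A\cap\Supp(P_i)\neq\emptyset$ for all $i\neq j$, and $|A|<\gon(C)-k=d-3$, contradicting Lemma \ref{lem:d-k}. For $d\geq 8$ this is immediate: take $x$ lying in three blocks $P_1,P_2,P_3$; as $|\Supp(P_1+P_2+P_3)|\leq 7<d$ there is a point $y$ outside it lying in three further blocks, so $\{x,y\}$ already meets six blocks, and adjoining at most $d-7$ further points one per remaining block gives $|A|\leq d-5<d-3$. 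The obstacle, which I expect to take the most work, is $d=6$ and $d=7$: there $x$ and $y$ cannot in general be chosen in disjoint triples of blocks, and one must run a short case analysis of the admissible $3$-regular $3$-uniform configurations --- for $d=6$ locating a pair $\{x,y\}$ contained in a unique common block but collectively meeting five of the six blocks (so $|A|=2<3$), and for $d=7$ separating off the unique ``symmetric'' (Fano-plane) incidence pattern and handling it by an explicit choice of $A$ of size $3<4$. With $d=6,7$ excluded as well, Lemma \ref{lem:d-k} rules out every case and configuration (iii) cannot occur.
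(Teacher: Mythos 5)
Your proposal follows the paper's strategy exactly: eliminate $d=3$ by distinctness of the $P_i$, eliminate $d=4,5$ by the Martens--Mumford bound \eqref{eq:mumford} against the generality of $P_1\in C^{(3)}$, and for larger $d$ contradict Lemma \ref{lem:d-k} with a set $A$ meeting all but one of the supports and of size $<\gon(C)-3$. The cases $d=4$, $d=5$ and $d\geq 8$ are handled correctly and essentially verbatim as in the paper.

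The gap is that for $d=6$ and $d=7$---which you yourself identify as the heart of the matter---you only assert what must be found, without proving it exists. For $d=6$ the needed statement is that in any $3$-regular, $3$-uniform configuration on $6$ points and $6$ blocks there is a pair $\{x,y\}$ meeting exactly five of the six supports; this is not automatic, and the paper's proof has to treat the degenerate subcase in which the three blocks through a point $q_1$ are supported on only four points (forcing $\Supp(P_1+P_2+P_3)=\{q_1,\dots,q_4\}$ by a multiplicity count), where the pair must be chosen using a fourth block. For $d=7$ you need both halves of the dichotomy: if some pair of points lies in two common blocks, one shows $|\Supp(P_1+P_2+P_3)|\leq 6<7$ and takes $A=\{x,y\}$ with $y$ outside; if no such pair exists, one must argue that every pair of points lies in exactly one block (a counting argument: $7\cdot 3=\binom{7}{2}$), so the configuration is the Fano plane, where no $2$-element set can miss exactly one block (any pair meets only five of the seven supports), and one must instead exhibit a $3$-element $A$ missing exactly one support, e.g. $A=\{q_4,q_5,q_6\}$ in the paper's labelling, and check it against all seven blocks. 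Since these existence and uniqueness claims are precisely where the combinatorial work of the lemma lies, the proposal as written is an accurate plan but not yet a proof; filling in these two case analyses would make it coincide with the paper's argument.
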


\begin{lemma}\label{lem:iv}
Configuration (iv) of Case B.1---that is $k=3$, $|N_2|=\frac{3}{2}d$ and $N_1=N_3=\emptyset$---does not occur.
\begin{proof}
If $k=3$, $|N_2|=\frac{3}{2}d$ and $N_1=N_3=\emptyset$, the divisor $D$ has the form $D=2(q_1+\dots+q_s)$, with $s=\frac{3d}{2}$.
In particular, each $q_j$ belongs to the support of two distinct points among $P_1,\dots,P_d\in C^{(3)}$.
Moreover, we may set $d=2b$ and $s=3b$, for some integer $b\geq 2$.

\smallskip
Suppose that $b=2$, so that $d=4$ and $s=6$. We may assume that $q_1\in \Supp(P_1)\cap \Supp(P_2)$ and we consider the projection $\pi_1\colon \mathbb{P}^{g-1}\dashrightarrow \mathbb{P}^{g-2}$ from $q_1$. 
By Lemma \ref{lem:sottoSP}, the planes $\Gamma_3:=\pi_1(\Lambda_3)$ and $\Gamma_4:=\pi_1(\Lambda_4)$ are $\SP(g-5)$, so that $\Gamma_3=\Gamma_4\cong \mathbb{P}^{2}$.
It follows that $\Span(q_1,\Lambda_3,\Lambda_4)$ has dimension at most $3$ and contains (at least) 5 points among $q_1,\dots,q_6$, say $q_1,q_3,\dots,q_6$.
Thus $|q_1+q_3+\dots+q_6|$ is a $\mathfrak{g}^r_5$ with $r\geq 1$, and $\{q_1,q_3,\dots,q_6\}$ contains the support of two points among $P_2,P_3,P_4$.
Since $g\geq k+4= 7$, then \eqref{eq:mumford} implies that $r=1$ and that $C$ possesses at most a 1-dimensional family of complete $\mathfrak{g}^1_5$, contradicting the fact that each $P_i\in C^{(3)}$ is a general point. 

\smallskip
We now assume $b=3$, so that $d=6$ and $s=9$.
Up to reordering indexes, there exist two distinct points $x,y\in \{q_1,\dots,q_d\}$, with $x\in\Supp(P_1)\cap\Supp(P_2)$ and $y\in\Supp(P_3)\cap\Supp(P_4)$.
Let $\ell:=\Span(x,y)$, and notice that it does not intersect $\Lambda_5$ and $\Lambda_6$ (otherwise we would have a $\mathfrak{g}^1_5$, but $\gon(C)=d=6$).  
Then we consider the projection $\pi_\ell\colon \mathbb{P}^{g-1}\dashrightarrow \mathbb{P}^{g-3}$ from $\ell$, and we deduce from Lemma \ref{lem:sottoSP} that the planes $\Gamma_5:=\pi_\ell(\Lambda_5)$ and $\Gamma_6:=\pi_\ell(\Lambda_6)$ are $\SP(g-6)$.
Hence $\Gamma_5=\Gamma_6\cong \mathbb{P}^{2}$ and $\Span(x,y,\Lambda_5,\Lambda_6)$ has dimension at most $4$.

If $\Supp(P_5+P_6)$ consists of at least 5 points, then we may define a $\mathfrak{g}^r_7$ with $r\geq 2$.
By Lemma \ref{lem:very ample} we deduce that $r=2$ and the complete linear series gives an embedding.
Therefore, it is the only $\mathfrak{g}^2_7$ on $C$ by \cite[Teorema 3.14]{Cil}, and we get a contradiction, because we may consider the point $P_5$ to be general on $C^{(3)}$.
  
Otherwise $|\Supp(P_5+P_6)|=4$, and $\Supp(P_5)\cap\Supp(P_6)$ consists of exactly two points, say $q_8,q_9$.
Since the points $P_1,\dots,P_6$ are indistinguishable, then for any $i=1,\dots,6$, there exists $j\neq i$ such that $|\Supp(P_i)\cap\Supp(P_j)|=2$, i.e.
 the points $P_i$ have the form
\begin{equation*}
P_1=q_1+q_2+q_3, P_2=q_2+q_3+q_4, P_3=q_4+q_5+q_6, P_4=q_5+q_6+q_7, P_5=q_7+q_8+q_9, P_6=q_8+q_9+q_1.
\end{equation*}
Hence we may consider the projection $\pi_{\ell'}\colon \mathbb{P}^{g-1}\dashrightarrow \mathbb{P}^{g-3}$ from the line $\ell':=\Span(q_1,q_4)$, so that the planes $\Gamma'_4:=\pi_{\ell'}(\Lambda_4)$ and $\Gamma'_5:=\pi_{\ell'}(\Lambda_5)$ are $\SP(g-6)$.
It follows that $\Span(q_1,q_4,\Lambda_4,\Lambda_5)$ has dimension at most $4$ and $\Supp(P_4+P_5)$ consists of 5 points.
So $|q_1+q_4+\dots+q_9|$ is a $\mathfrak{g}^r_7$ with $r\geq 2$, and we can conclude as above.

\smallskip
Finally, suppose that $b\geq 4$.
As $s=3b\geq 12$, up to reorder indexes, we may consider three distinct points $x,y,z\in \{q_1,\dots,q_d\}$, with $x\in\Supp(P_1)\cap\Supp(P_2)$, $y\in\Supp(P_3)\cap\Supp(P_4)$ and $z\in\Supp(P_5)\cap\Supp(P_6)$.
Then we can choose $n\leq d-7$ distinct points $a_1,\dots,a_n\in \{q_1,\dots,q_d\}\smallsetminus\{x,y,z\}$, such that the set $A:=\left\{x,y,z,a_1,\dots,a_n\right\}$ contains a point in the support of $P_i$ for any $i=1,\dots,d-1$ and $A\cap\Supp(P_d)=\emptyset$.
Thus $A$ satisfies the assumption of Lemma \ref{lem:d-k}, with $|A|\leq d-4=\gon(C)-4$, a contradiction.

Therefore, we conclude that case (iv) does not occur.
\end{proof}
\end{lemma}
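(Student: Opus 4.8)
The plan is to derive a contradiction from the rigidity that configuration (iv) imposes, using the counting bound of Lemma \ref{lem:d-k} when $d$ is large and projection together with the Brill--Noether bounds of Theorem \ref{thm:mm} when $d$ is small. In this configuration the divisor has the shape $D = 2(q_1 + \dots + q_s)$ with $s = \frac{3}{2}d$, so $d = 2b$ is even, $s = 3b$, and each of the $s$ points $q_j$ lies in the support of \emph{exactly two} of the triples $P_1, \dots, P_d \in C^{(3)}$. This last fact---that every $q_j$ is shared by precisely two of the sets $\Supp(P_i)$---is the structural feature that every step exploits.

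For the large range $b \geq 4$ (equivalently $d \geq 8$, $s \geq 12$) I would argue purely combinatorially via Lemma \ref{lem:d-k}. Because each point meets two supports, I can select three points $x, y, z$ from three disjoint pairs of supports, say $x \in \Supp(P_1) \cap \Supp(P_2)$, $y \in \Supp(P_3) \cap \Supp(P_4)$ and $z \in \Supp(P_5) \cap \Supp(P_6)$, thereby already hitting $P_1, \dots, P_6$. I then adjoin at most $d - 7$ further points, one drawn from each of $\Supp(P_7), \dots, \Supp(P_{d-1})$ and all chosen off $\Supp(P_d)$, to obtain a set $A$ meeting every support except $\Supp(P_d)$. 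Since $|A| \leq 3 + (d-7) = d - 4$, this violates the lower bound $|A| \geq \gon(C) - k = d - 3$ of Lemma \ref{lem:d-k}, a contradiction.

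The two small cases call for the projection technique of Lemma \ref{lem:sottoSP}. For $b = 2$ ($d = 4$, $s = 6$) I would project $\mathbb{P}^{g-1}$ from a point $q_1 \in \Supp(P_1) \cap \Supp(P_2)$; since the planes $\Lambda_3, \Lambda_4$ avoid $q_1$, their images become $\SP(g-5)$ and so coincide, forcing $\dim \Span(q_1, \Lambda_3, \Lambda_4) \leq 3$. This span contains at least five of the $q_j$, so geometric Riemann--Roch yields a $\mathfrak{g}^1_5$ varying with $P_1$, which is excluded for $g \geq 7$ by Mumford's bound \eqref{eq:mumford}. For $b = 3$ ($d = 6$, $s = 9$) I would project instead from the line $\ell = \Span(x, y)$ with $x \in \Supp(P_1) \cap \Supp(P_2)$ and $y \in \Supp(P_3) \cap \Supp(P_4)$; as $\ell$ misses $\Lambda_5$ and $\Lambda_6$ (else one would already have a $\mathfrak{g}^1_5$), their images are $\SP(g-6)$ and hence equal, so $\dim \Span(x, y, \Lambda_5, \Lambda_6) \leq 4$. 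When $\Supp(P_5 + P_6)$ has at least five distinct points this span carries seven points of $C$, producing a $\mathfrak{g}^2_7$ that is very ample by Lemma \ref{lem:very ample} and therefore confined to a finite list, contradicting the generality of $P_5$.

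The genuine obstacle is the residual subcase $b = 3$ with $|\Supp(P_5 + P_6)| = 4$, in which $\Supp(P_5) \cap \Supp(P_6)$ consists of two points. Here the counting bound is useless---a hitting set of the minimal size $d - 3 = 3$ does exist---and the sharing relation forces the six triples into a single rigid cyclic pattern on $q_1, \dots, q_9$, with consecutive triples overlapping in two points. The way out is to break this symmetry by projecting from a second, carefully chosen line joining two points sitting at opposite ends of the cycle, so as to collapse a fresh pair of planes and again manufacture a $\mathfrak{g}^2_7$ whose base locus now contains five distinct points; the same very-ampleness-plus-finiteness contradiction then closes the case. Verifying that the cyclic configuration is the \emph{only} combinatorial possibility, and that this second projection really does produce five distinct points on a span of dimension at most four, is the technical heart of the lemma.
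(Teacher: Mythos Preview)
Your proposal is correct and follows essentially the same route as the paper: the combinatorial hitting-set argument via Lemma~\ref{lem:d-k} for $b\geq 4$, projection from a point for $b=2$, projection from a line for $b=3$, and in the residual cyclic subcase a second projection from a line through two of the single-overlap points (the paper takes $\ell'=\Span(q_1,q_4)$) so that the two remaining planes $\Lambda_4,\Lambda_5$ share only one support point and the resulting span carries seven distinct points of $C$. The only point you leave implicit is why the very ample $\mathfrak{g}^2_7$ is unique (or at least finite in moduli); the paper invokes \cite[Teorema 3.14]{Cil} for this, and you should cite it as well rather than treating finiteness as a consequence of very-ampleness alone.
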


We note that Lemmas \ref{lem:ii}, \ref{lem:iii} and \ref{lem:iii} conclude the analysis of Case B.1 for the $3$-fold symmetric product of $C$.
In particular, the proof of Theorem \ref{thm:covfam2} for $k=3$ is now complete.
So, in the analysis of the remaining cases (v)--(x), we assume that Theorem \ref{thm:covfam2} holds for $k=3$ also.

\begin{lemma}\label{lem:v}
Configuration (v) of Case B.1---that is  $k=4$, $|N_1|=|N_3|=d$ and $N_2=N_4=\emptyset$---does not occur.
\begin{proof}
If $k=4$, $|N_1|=|N_3|=d$ and $N_2=N_4=\emptyset$, then $D=(c_1+\dots+c_d)+3(q_1+\dots+q_d)$.
Moreover, as the number of points in $N_\alpha\cap\Supp(P_i)$ does not depend on $i$, any point $P_i$ has the form $P_i=c_i+q_{i_1}+q_{i_2}+q_{i_3}$, where $q_{i_1},q_{i_2},q_{i_3}\in N_3=\{q_1,\dots,q_d\}$ are distinct points. 

If $d=3$, then $|q_1+q_2+q_3|$ is a $\mathfrak{g}^1_3$ on $C$. Since $P_1=c_1+q_1+q_2+q_3\in C^{(4)}$ is a general point, then also $q_1+q_2+q_3\in C^{(3)}$ is. Thus we get a contradiction, because divisors $q_1+q_2+q_3$ defining a $\mathfrak{g}^1_3$ vary in a 1-dimensional family by \eqref{eq:martens}. 

If $d\geq 4$, we proceed as in the proof of Lemma \ref{lem:ii}.
Namely, we define the map $\psi_t\colon f(E_t)\dashrightarrow C^{(3)}$ sending $P_i=c_i+q_{i_1}+q_{i_2}+q_{i_3}$ to $\psi_t(P_i):=P_i-c_i=q_{i_1}+q_{i_2}+q_{i_3}$.
The image of this map is an irreducible curve on $C^{(3)}$, with $\gon(\psi_t(f(E_t)))\leq \gon (E_t)=\gon(C)$.
As we vary $t\in T$, the curves $\psi_t(f(E_t))$ describe a family of irreducible curves covering $C^{(3)}$ and having gonality $\gon(\psi_t(f(E_t)))\leq \gon(C)$.
Since $\covgon(C^{(3)})= \gon(C)$, we deduce that $\gon(\psi_t(f(E_t)))=\gon(C)$.
Then the assertion of Theorem \ref{thm:covfam2} for $k=3$ implies that $\psi_t(f(E_t))=C_Q:=\left\{\left. Q+p\in C^{(3)}\right|p\in C\right\}$ for some point $Q:=y_1+y_2\in C^{(2)}$.
Therefore, for any $i=1,\dots,d$, the point $\psi_t(P_i)\in \psi_t(f(E_t))$ has the form $\psi_t(P_i)=q_{i_1}+q_{i_2}+q_{i_3}=q_{i_1}+y_1+y_2$, but this is impossible because the points supporting $\psi_t(P_i)$ are elements of $N_3$, so they can not lie in the support of all the points $P_1,\dots,P_d$ with $d\geq 4$.   

Thus case (v) does not occur.
\end{proof}
\end{lemma}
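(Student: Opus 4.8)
The plan is to mirror the strategy of Lemma \ref{lem:ii}, using the inductive hypothesis that Theorem \ref{thm:covfam2} has already been established for $k=3$. First I would unwind the combinatorics of configuration (v): since $k=4$, $|N_1|=|N_3|=d$ and $N_2=N_4=\emptyset$, the divisor $D$ splits as $D=(c_1+\dots+c_d)+3(q_1+\dots+q_d)$ with all $c_j,q_j$ distinct. Then I would invoke the observation (made earlier in the proof of Theorem \ref{thm:covfam2} when showing $\alpha|N_\alpha|$ is a multiple of $d$) that the number of points of $N_\alpha$ in $\Supp(P_i)$ is independent of $i$; this forces each $P_i$ to have the shape $P_i = c_i + q_{i_1}+q_{i_2}+q_{i_3}$ with the three $q$'s distinct elements of $N_3$.

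The case $d=3$ is degenerate and should be dispatched first: then $q_1+q_2+q_3$ is forced to be a $\mathfrak{g}^1_3$, but since $P_1\in C^{(4)}$ is general the divisor $q_1+q_2+q_3\in C^{(3)}$ is also general, contradicting Martens' theorem \eqref{eq:martens}, which says $\dim W^1_3\leq 0$ so such divisors form at most a $1$-dimensional family. For $d\geq 4$, I would define the "subtract the $c_i$'s" map $\psi_t\colon f(E_t)\dashrightarrow C^{(3)}$ sending $P_i\mapsto P_i-c_i = q_{i_1}+q_{i_2}+q_{i_3}$. This is visibly non-constant, so its image is an irreducible curve in $C^{(3)}$ with $\gon(\psi_t(f(E_t)))\leq \gon(E_t)=\gon(C)$. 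Letting $t$ vary over $T$, the curves $f(E_t)$ cover $C^{(3)}$ (actually cover $C^{(4)}$, hence their images cover $C^{(3)}$), so by \eqref{eq:covgon} for $k=3$ one gets $\covgon(C^{(3)})=\gon(C)$, forcing equality $\gon(\psi_t(f(E_t)))=\gon(C)$. Now Theorem \ref{thm:covfam2} for $k=3$ applies and yields $\psi_t(f(E_t))=C_Q$ for some $Q=y_1+y_2\in C^{(2)}$.

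Finally I would extract the contradiction exactly as in Lemma \ref{lem:ii}: membership in $C_Q$ means every $\psi_t(P_i)$ contains $Q=y_1+y_2$ in its support, i.e. $\psi_t(P_i)=q_{i_1}+y_1+y_2$. So the fixed points $y_1,y_2$ would have to lie in the support of all $d\geq 4$ of the divisors $\psi_t(P_i)$; but the points appearing in $\Supp(\psi_t(P_i))$ all belong to $N_3=\{q_1,\dots,q_d\}$, and each $q_j\in N_3$ lies in the support of exactly three of the $P_i$'s (equivalently three of the $\psi_t(P_i)$'s), not all $d\geq 4$ of them. This contradiction shows configuration (v) cannot occur. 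The only genuinely delicate point is confirming that the reduction map $\psi_t$ produces a covering family of $C^{(3)}$ with the right gonality — but this is handled verbatim by the $k=3$ case together with \eqref{eq:covgon}, so no new ideas are needed; the rest is bookkeeping on the shape of the $P_i$'s.
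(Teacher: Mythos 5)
Your proposal is correct and follows essentially the same route as the paper: dispose of $d=3$ via Martens' theorem, then for $d\geq 4$ use the map $\psi_t(P_i)=P_i-c_i$ to build a covering family of $C^{(3)}$ of gonality $\gon(C)$, apply the already-proved $k=3$ case of Theorem \ref{thm:covfam2}, and contradict the fact that points of $N_3$ lie in the support of only three of the $P_i$. No substantive differences from the paper's argument.
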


\begin{lemma}\label{lem:vi}
Configuration (vi) of Case B.1---that is  $k=4$, $|N_1|=2d$, $|N_2|=d$ and $N_3=N_4=\emptyset$---does not occur.
\begin{proof}
 If $k=4$, $|N_1|=2d$, $|N_2|=d$ and $N_3=N_4=\emptyset$, then $D=(c_1+\dots+c_{2d})+2(q_1+\dots+q_d)$ and any point $P_i$ has the form $P_i=c_{i_1}+c_{i_2}+q_{i_3}+q_{i_4}$, where the points $c_{i_1},c_{i_2}\in N_1=\{c_1,\dots,c_{2d}\}$, $q_{i_3},q_{i_4}\in N_2= \{q_1,\dots,q_d\}$ are distinct.
Given the map $\psi_t\colon f(E_t)\dashrightarrow C^{(2)}$ sending $P_i=c_{i_1}+c_{i_2}+q_{i_3}+q_{i_4}$ to $q_{i_3}+q_{i_4}$, and using the very same argument of Lemma \ref{lem:ii}, we conclude that case (vi) does not occur.
\end{proof}
\end{lemma}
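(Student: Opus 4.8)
The plan is to reduce configuration (vi) to the case $k=2$ of Theorem~\ref{thm:covfam2}, following verbatim the strategy of Lemma~\ref{lem:ii}. Under the hypotheses of case (vi) the divisor in \eqref{eq:divisorDbis} reads $D=(c_1+\dots+c_{2d})+2(q_1+\dots+q_d)$, and since the number of points of each $N_\alpha$ lying in $\Supp(P_i)$ is independent of $i$, every point decomposes as $P_i=c_{i_1}+c_{i_2}+q_{i_3}+q_{i_4}$ with $c_{i_1},c_{i_2}\in N_1$ and $q_{i_3},q_{i_4}\in N_2$ all distinct. First I would define the rational map $\psi_t\colon f(E_t)\dashrightarrow C^{(2)}$ extracting the $N_2$-part, namely $\psi_t(P_i):=q_{i_3}+q_{i_4}$. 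As in Lemma~\ref{lem:ii}, this is a non-constant morphism onto an irreducible curve: were it constant on one branch, the indistinguishability of $P_1,\dots,P_d$ would force the whole $N_2$-part of $D$ to be fixed, against the standing assumption of Case~B.1 that $\xi_{2,t}$ is non-constant. Pushing the $d$-gonal pencil $\varphi_t$ forward along $\psi_t$ produces a degree-$d$ pencil on the image (its classes being constant by Abel's theorem), so $\gon(\psi_t(f(E_t)))\le \gon(E_t)=\gon(C)$.

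Next I would verify that, as $t$ ranges over $T$, the curves $\psi_t(f(E_t))$ sweep out $C^{(2)}$. This is a dimension count on the induced map $\Psi\colon \cE\dashrightarrow C^{(2)}$, $x\mapsto \psi_{\pi(x)}(f(x))$: its fibre $\Psi^{-1}(R)$ over a general $R\in C^{(2)}$ is carried by $f$ into $R+C^{(2)}\subset C^{(4)}$, and since $f$ is dominant and generically finite (both $\cE$ and $C^{(4)}$ having dimension $4$) we get $\dim \Psi^{-1}(R)\le \dim(R+C^{(2)})=2$; hence $\dim\overline{\Psi(\cE)}\ge 4-2=2$ and $\Psi$ is dominant. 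Thus the $\psi_t(f(E_t))$ form (after the smoothing base change of Remark~\ref{rem:covfam}) a covering family of $C^{(2)}$ whose members have gonality at most $\gon(C)$; since $\covgon(C^{(2)})=\gon(C)$ by \cite[Theorem~1.5]{B1}, the gonality is exactly $\gon(C)$.

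At this point the already-established case $k=2$ of Theorem~\ref{thm:covfam2} applies and yields $\psi_t(f(E_t))=C_Q=\{Q+p\mid p\in C\}$ for some $Q\in C$. Consequently $Q$ lies in the support of $\psi_t(P_i)=q_{i_3}+q_{i_4}$ for every $i=1,\dots,d$, so on one hand $Q\in N_2$, whence $\mult_Q(D)=2$, and on the other hand $Q\in \Supp(P_i)$ for all $i$. Writing $D=P_1+\dots+P_d$ with the $P_i$ off the diagonal, the latter gives $\mult_Q(D)=\#\{i\mid Q\in\Supp(P_i)\}=d$. Since $d=\gon(C)\ge 3$, this contradicts $\mult_Q(D)=2$, and configuration (vi) is excluded.

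I expect the only delicate point to be the covering step: because $\psi_t$ is defined through the multiplicity pattern of the family rather than as the restriction of a morphism on $C^{(4)}$, one cannot simply invoke that $f(E_t)$ covers $C^{(4)}$, and the fibre-dimension estimate for $\Psi$ above is the way I would make this rigorous---exactly as in the analogous passage of Lemma~\ref{lem:ii}. Everything else is a direct transcription of that lemma, with the $N_2$-part now accounting for two of the four points of each $P_i$.
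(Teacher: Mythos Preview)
Your proposal is correct and follows the same route as the paper: project each $P_i$ onto its $N_2$-part via $\psi_t\colon f(E_t)\dashrightarrow C^{(2)}$, observe that the resulting curves have gonality at most $\gon(C)$ and cover $C^{(2)}$, invoke the already-proved case $k=2$ of Theorem~\ref{thm:covfam2} to force $\psi_t(f(E_t))=C_Q$, and derive a multiplicity contradiction from $Q\in N_2$ versus $Q\in\Supp(P_i)$ for all $i$. Your fibre-dimension argument for the dominance of $\Psi\colon\mathcal{E}\dashrightarrow C^{(2)}$ is a genuine addition---the paper (both here and in Lemma~\ref{lem:ii}) simply asserts the covering ``because the curves $f(E_t)$ cover $C^{(k)}$'' without the explicit count, so your version is in fact more detailed at the one point you flagged as delicate.
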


\begin{lemma}\label{lem:vii}
Configuration (vii) of Case B.1---that is  $k=4$, $|N_1|=d$, $|N_2|=\frac{3}{2}d$ and $N_3=N_4=\emptyset$---does not occur.
\begin{proof}
If $k=4$, $|N_1|=d$, $|N_2|=\frac{3}{2}d$ and $N_3=N_4=\emptyset$, then $D=(c_1+\dots+c_{d})+2(q_1+\dots+q_s)$, with $s=\frac{3}{2}d$.
Moreover, any point $P_i$ has the form $P_i=c_{i}+q_{i_1}+q_{i_2}+q_{i_3}$, where $q_{i_1},q_{i_2},q_{i_3}\in N_2= \{q_1,\dots,q_s\}$ are distinct.
Analogously, by considering the map $\psi_t\colon f(E_t)\dashrightarrow C^{(3)}$ in Lemma \ref{lem:v}, and retracing the argument therein, we rule out case (vii) also. 
\end{proof}
\end{lemma}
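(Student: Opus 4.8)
The plan is to run the same reduction as in the proofs of Lemmas~\ref{lem:ii} and~\ref{lem:v}: from each point $P_i$ I would delete the single point of $N_1$ that it carries, obtaining a curve on $C^{(3)}$, and by letting $t$ vary I would get a covering family of $d$-gonal curves on $C^{(3)}$; then the already-established case $k=3$ of Theorem~\ref{thm:covfam2} produces a numerical contradiction with the multiplicity pattern of configuration~(vii).

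First I would pin down the shape of the data. In configuration~(vii) we have $k=4$, $N_3=N_4=\emptyset$, $|N_1|=d$ and $|N_2|=s:=\tfrac{3}{2}d$; writing $N_1=\{c_1,\dots,c_d\}$ and $N_2=\{q_1,\dots,q_s\}$, with these $d+s$ points pairwise distinct, the divisor in~\eqref{eq:divisorDbis} is $D=(c_1+\dots+c_d)+2(q_1+\dots+q_s)$. As noted at the start of Case~B.1, the number of points of $N_\alpha$ in $\Supp(P_i)$ cannot depend on $i$, for otherwise the $P_i$ would sweep out distinct irreducible components of the irreducible curve $E_t$; hence each $P_i\in C^{(4)}$ contains exactly one point of $N_1$ and three distinct points of $N_2$, say $P_i=c_i+q_{i_1}+q_{i_2}+q_{i_3}$ with $q_{i_1},q_{i_2},q_{i_3}\in N_2$ after relabelling.

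Next I would build the reduced family. Let $\psi_t\colon f(E_t)\dashrightarrow C^{(3)}$ be the rational map $P_i\longmapsto P_i-c_i=q_{i_1}+q_{i_2}+q_{i_3}$, which is defined generically on $f(E_t)$ since each $P_i$ contains a unique point of $N_1$. It is non-constant: were its image a single point $R_0$, then $P_i=c_i+R_0$ for every $i$, so every point of $\Supp(R_0)$ would have multiplicity at least $d$ in $D$; but $\Supp(R_0)\subset N_2$, whose points have multiplicity $2<d$ in $D$. Hence $\psi_t(f(E_t))$ is an irreducible curve on $C^{(3)}$, and since $\psi_t$ is dominant onto its image $\gon\big(\psi_t(f(E_t))\big)\le\gon(E_t)=\gon(C)$. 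As the curves $f(E_t)$ cover $C^{(4)}$ and, by the genericity remark preceding Case~B.1, the multiplicity configuration of $D_{(t,y)}$ is constant over an open subset of $T\times\bP^1$, the curves $\psi_t(f(E_t))$ cover an open subset of $C^{(3)}$; thus they form a covering family, and $\covgon\big(C^{(3)}\big)=\gon(C)$ forces $\gon\big(\psi_t(f(E_t))\big)=\gon(C)$ for general $t$.

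Finally, by Theorem~\ref{thm:covfam2} in the case $k=3$ (available here, the cases $k=2,3$ being already settled) there is a point $Q=y_1+y_2\in C^{(2)}$ with $\psi_t(f(E_t))=C_Q=\{Q+p\mid p\in C\}$. Then $y_1+y_2\le\psi_t(P_i)=q_{i_1}+q_{i_2}+q_{i_3}$ for every $i=1,\dots,d$, so (say) $y_1$ lies in $\Supp(P_i)$ for all $i$; since $\Supp\big(\psi_t(P_i)\big)\subset N_2$, we get $y_1\in N_2$. But each point of $N_2$ has multiplicity $2$ in $D$, i.e.\ lies in the support of exactly two of the $P_i$, whereas $d=\gon(C)\ge 3$ --- precisely the contradiction appearing at the end of the proof of Lemma~\ref{lem:ii}. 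Hence configuration~(vii) does not occur. The step I expect to be the real obstacle is the one asserting that the reduced curves $\psi_t(f(E_t))$ actually cover $C^{(3)}$: one must verify that, as $(t,y)$ ranges over the relevant open set, the point $\psi_t(P_1)$ attains a general point of $C^{(3)}$ rather than being confined to a proper subvariety, and that $\psi_t$ is genuinely generically defined on each $f(E_t)$; this is exactly where the constancy of the multiplicity pattern of $D_{(t,y)}$ on an open subset of $T\times\bP^1$, together with the generality of $P_1\in C^{(4)}$, is used.
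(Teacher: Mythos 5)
Your proposal is correct and follows essentially the same route as the paper: the paper's proof of this case simply says to take the map $\psi_t\colon f(E_t)\dashrightarrow C^{(3)}$, $P_i\mapsto P_i-c_i$, and retrace the argument of Lemmas \ref{lem:ii} and \ref{lem:v} (reduced curves cover $C^{(3)}$, hence have gonality $\gon(C)$, hence equal some $C_Q$ with $Q=y_1+y_2$, contradicting that points of $N_2$ lie in the support of only two of the $P_i$ while $d\geq 3$), which is exactly what you do, with the non-constancy and covering steps spelled out in slightly more detail.
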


\begin{lemma}\label{lem:viii}
Configuration (viii) of Case B.1---that is  $k=4$, $|N_4|=d$, and $N_1=N_2=N_3=\emptyset$---does not occur.
\begin{proof}
If $k=4$, $|N_4|=d$, and $N_1=N_2=N_3=\emptyset$, the divisor $D$ has the form $D=4(q_1+\dots+q_d)$, where $|q_1+\dots+q_d|$ is a $\mathfrak{g}^1_d$ on $C$.

Clearly $d\neq 4$, otherwise $P_1=P_2=P_3=P_4$. We note further that $d\neq 5, 6$, because divisors of degree $d=5,6$ defining a $\mathfrak{g}^1_d$ vary in family of dimension at most $3$ by \eqref{eq:mumford}, but $P_1=q_1+q_2+q_3+q_4$ is a general point of $C^{(4)}$.

In order to discuss the case $d\geq 7$, we prove the following.
\begin{claim}\label{claim:case_viii}
There exist two distinct points $x,y\in\{q_1,\dots,q_d\}$ such that $\{x,y\}$ intersects the support of exactly $\alpha$ points among $P_1,\dots,P_d$, with $6\leq\alpha\leq d-1$.
\begin{proof}[Proof of Claim \ref{claim:case_viii}]
Initially, we prove that $\alpha\geq 6$.
Without loss of generality, we assume that $q_1$ lies in the support of $P_1,P_2,P_3,P_4$ and $P_5=q_2+q_3+q_4+q_5$.
If there exists $q_j \in\{q_2,\dots,q_5\}$, such that $q_j\in\Supp(P_i)$ for some $i\geq 6$, then $\{x,y\}=\{q_1,q_j\}$ intersects the support of at least 6 points, as wanted.
Otherwise, for any $j\in\{2,\dots,5\}$, we have that if $p_j\in\Supp(P_i)$, then $1\leq i\leq 5$.
This implies that $\mult_{q_j}(P_1+\dots+P_5)=4$ for any $j\in\{2,\dots,5\}$. 
Therefore, up to reordering indexes, the points $P_1,\dots,P_5\in C^{(4)}$ have the form
\begin{align*}
&P_1=q_1+q_2+q_3+q_4, \quad P_2=q_1+q_2+q_3+q_5, \quad P_3=q_1+q_2+q_4+q_5,\\
&P_4=q_1+q_3+q_4+q_5, \quad P_5=q_2+q_3+q_4+q_5.
\end{align*}
Then the set $\{x,y\}=\{q_1,q_6\}$ intersects the support of at least 6 points, as wanted.

Since $x,y\in N_4$, we have $\alpha\in\{6,7,8\}$, so the bound $\alpha\leq d-1$ is trivial for any $d\geq 9$.
Thus we have to prove that we can choose $q,q'$ such that $\alpha\leq d-1$ with $d=7,8$.

Assume that $d=8$. By the first part of the proof, there exists two points $q_i,q_j$ such that $\{q_i,q_j\}$ intersects the support of exactly $\beta\geq 6$ points among $P_1,\dots,P_d$.
If $\beta\leq 7$, we take $\{x,y\}=\{q_i,q_j\}$.
If instead, $\beta=8$, we may assume $\{q_i,q_j\}=\{q_1,q_2\}$, where $q_1$ lies in the support of $P_1,P_2,P_3,P_4$ and $q_2$ lies in the support of $P_5,P_6,P_7,P_8$.
Let $q_3$ be such that $1\leq\mult_{q_3}(P_5+\dots+P_8)\leq 3$ (such a point does exist as $P_5,\dots,P_8$ are distinct points). 
If $\mult_{q_3}(P_5+\dots+P_8)\neq 1$, the pair $\{x,y\}=\{q_1,q_3\}$ intersects exactly $6\leq\alpha\leq 7$ points among $P_1,\dots,P_8$. 
If  instead $\mult_{q_3}(P_5+\dots+P_8)= 1$, then $\mult_{q_3}(P_1+\dots+P_4)=3$, and the pair $\{x,y\}=\{q_2,q_3\}$ intersects exactly $6\leq\alpha\leq 7$ points among $P_1,\dots,P_8$.

Finally, suppose that $d=7$.
We claim that there are no two distinct points $q_i,q_j\in \{q_1,\dots,q_7\}$ lying on the support of the same 4 points among $P_1,\dots,P_7$.
By contradiction, suppose that $q_1,q_2$ belong to the support of $P_1,\dots,P_4$.
Since the points $P_1,\dots,P_7$ are indistinguishable, also the support of $P_7$ must contain two points with the same property.
Denoting by $q_3,q_4$ these points, they necessarily belong to the support of $P_5,P_6,P_7$ and of a point $P_i=q_1+q_2+q_3+q_4$ with $i\in \{1,\dots,4\}$.
Thus the support of $P_i$ is given by two pairs $\{q_1,q_2\}$ and $\{q_3,q_4\}$ with this property.
Hence the same must happen for all the points $P_1,\dots,P_7$, but this is impossible as $d=7$ and we can not pair all the points $q_j$. Therefore, there are no two distinct points $q_i,q_j\in \{q_1,\dots,q_7\}$ belonging to the support of the same 4 points among $P_1,\dots,P_7$.

Now, the first part of the proof ensures that there exists two points $q_i,q_j$ such that $\{q_i,q_j\}$ intersects the support of exactly $\beta\geq 6$ points among $P_1,\dots,P_d$.
If $\beta=6$, we take $\{x,y\}=\{q_i,q_j\}$.
Otherwise, $\beta=7$ and we may assume $\{q_i,q_j\}=\{q_1,q_2\}$, where $q_1$ lies in the support of $P_1,P_2,P_3,P_4$ and $q_2$ lies in the support of $P_1,P_5,P_6,P_7$.
Let $P_1=q_1+q_2+q_3+q_4$ and let us focus on the point $q_3$.
We note that $\mult_{q_3}(P_1+\dots+P_4)\neq 4$, otherwise $q_1,q_3$ would belong to the support of the same 4 points $P_1,\dots,P_4$, a contradiction.
Analogously, if $\mult_{q_3}(P_1+\dots+P_4)=1$, then $q_2,q_3$ belong to the support of the same 4 points $P_1,P_5,P_6,P_7$, which gives again a contradiction.
Hence $\mult_{q_3}(P_1+\dots+P_4)=2$ or $3$.
In the former case, we have that $\{x,y\}=\{q_1,q_3\}$ intersects the support of exactly 6 points among $P_1,\dots,P_7$, whereas in the latter case the same holds for $\{x,y\}=\{q_2,q_3\}$.
\end{proof}
\end{claim}
In the light of the claim, we can choose $n\leq d-\alpha-1\leq d-7$ distinct points $a_1,\dots,a_n\in \{q_1,\dots,q_d\}\smallsetminus\{x,y\}$, such that the set $A:=\left\{x,y,a_1,\dots,a_n\right\}$ contains a point in the support of all the points $P_i$ but one.
Thus $A$ satisfies the assumption of Lemma \ref{lem:d-k}, with $|A|\leq d-5<\gon(C)-4$, a contradiction.
In conclusion, case (viii) does not occur.
\end{proof}
\end{lemma}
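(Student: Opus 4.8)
The plan is to follow the template of Lemma \ref{lem:iii} (the analogous $k=3$, $|N_3|=d$ case), splitting into small and large $d$. In configuration (viii) the divisor is $D=4(q_1+\dots+q_d)$, so $|q_1+\dots+q_d|$ is a $\mathfrak{g}^1_d$ and each $q_j$ lies in the support of exactly four of the points $P_1,\dots,P_d\in C^{(4)}$; dually, each $\Supp(P_i)$ consists of four of the $q_j$. First I would dispose of the small cases: $d=4$ forces $P_1=P_2=P_3=P_4$, which is absurd, while for $d=5,6$ the family of divisors $q_1+\dots+q_d$ carrying a $\mathfrak{g}^1_d$ has dimension at most $3$ by \eqref{eq:mumford}, contradicting that $P_1=q_1+q_2+q_3+q_4$ is a general point of the fourfold $C^{(4)}$.

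The heart of the argument is the case $d\geq 7$, where I would apply Lemma \ref{lem:d-k}. The key combinatorial fact to establish is a claim: there exist two distinct points $x,y\in\{q_1,\dots,q_d\}$ whose union meets the support of exactly $\alpha$ of the $P_i$, with $6\leq\alpha\leq d-1$. Granting this, one enlarges $\{x,y\}$ by at most $d-\alpha-1\leq d-7$ further points $a_1,\dots,a_n$ to obtain a set $A$ meeting every $\Supp(P_i)$ but one, whence Lemma \ref{lem:d-k} gives $\gon(C)-4=d-4\leq|A|\leq d-5$, a contradiction. The underlying counting is that if $x,y$ share $c$ common blocks then they meet $8-c$ blocks in total, so $\alpha=8-c$ with $c\in\{0,\dots,4\}$; thus the lower bound $\alpha\geq 6$ amounts to finding a pair sharing at most two blocks, while the upper bound $\alpha\leq d-1$ is automatic once $d\geq 9$ and requires extra care only for $d=7,8$.

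To prove $\alpha\geq 6$ in general, I would fix $q_1$ lying in $\Supp(P_1)\cap\cdots\cap\Supp(P_4)$ and pick a block, say $P_5$, avoiding $q_1$. If some point of $\Supp(P_5)$ also appears in a block $P_i$ with $i\geq 6$, then the pair consisting of $q_1$ and that point already meets at least six blocks. Otherwise $P_1,\dots,P_5$ exhaust only the five points $q_1,\dots,q_5$ and realize all five $4$-subsets, and then $\{q_1,q_6\}$ meets six blocks. For the upper bound I must still control $d=7,8$. For $d=8$, if the pair produced above meets $\beta\leq 7$ blocks we are done; if $\beta=8$ (disjoint supports, $q_1\in P_1,\dots,P_4$ and $q_2\in P_5,\dots,P_8$), introducing a third point $q_3$ with $1\leq\mult_{q_3}(P_5+\dots+P_8)\leq 3$ and pairing it suitably with $q_1$ or $q_2$ yields exactly six or seven blocks. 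For $d=7$ one first rules out two points sharing all four blocks---such a rigid pairing structure is incompatible with the odd count $d=7$---and then refines a pair meeting seven blocks down to one meeting exactly six, by examining a third point $q_3\in\Supp(P_1)$ and showing $\mult_{q_3}(P_1+\dots+P_4)\in\{2,3\}$.

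The main obstacle is precisely this claim, and within it the exhaustive bookkeeping for $d=7$ and $d=8$: one must argue that the rigid \emph{complete $4$-subset} and \emph{perfectly paired} configurations forced by the incidence constraints (each point in four blocks, each block of size four) cannot persist, so that a pair with intersection number $6\leq\alpha\leq d-1$ always exists. Once the claim is in hand, the contradiction via Lemma \ref{lem:d-k} is immediate, and case (viii) is excluded.
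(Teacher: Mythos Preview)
Your proposal is correct and follows essentially the same approach as the paper's proof: dispose of $d\leq 6$ by the distinctness of the $P_i$ and the dimension count from \eqref{eq:mumford}, then for $d\geq 7$ prove the same combinatorial claim (a pair $\{x,y\}$ meeting exactly $\alpha$ blocks with $6\leq\alpha\leq d-1$) via the same case analysis, and conclude by Lemma~\ref{lem:d-k}. Your treatment of the lower bound $\alpha\geq 6$, the automaticity for $d\geq 9$, and the refinements for $d=7,8$ all mirror the paper's argument step for step.
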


\begin{lemma}\label{lem:ix}
Configuration (ix) of Case B.1---that is  $k=4$, $|N_3|=\frac{4}{3}d$ and $N_1=N_2=N_4=\emptyset$---does not occur.
\begin{proof}
If $k=4$, $|N_3|=\frac{4}{3}d$ and $N_1=N_2=N_4=\emptyset$, then $D=3(q_1+\dots+q_s)$, where $d=3b$ and $s=4b$ for some integer $b\geq 2$ (if $b$ equaled 1, then we would have $P_1=P_2=P_3=q_1+\dots+q_4$).

\smallskip
Suppose that $b\neq 2$. 
Given a point $x\in \{q_1,\dots,q_s\}$, it belongs to the support of three points of $C^{(4)}$, say $P_1,P_2,P_3$. 
Then $\Supp(P_1)\cup\Supp(P_2)\cup\Supp(P_3)$ consists of at most 10 distinct points. 
Since $s=4b\geq 12$ and $d=3b\geq 9$, there exists a point $y\in \{q_1,\dots,q_s\}\smallsetminus\{x\}$ belonging to the support of three other points of $C^{(4)}$, say $P_4,P_5,P_6$.
Hence we can choose $n\leq d-7$ distinct points $a_1,\dots,a_n\in \{q_1,\dots,q_s\}\smallsetminus\{x,y\}$, such that the set $A:=\left\{x,y,a_1,\dots,a_n\right\}$ contains a point in the support of $P_i$ for any $i=1,\dots,d-1$  and $A\cap\Supp(P_d)=\emptyset$.
Therefore $A$ satisfies the assumption of Lemma \ref{lem:d-k}, with $|A|\leq d-5<\gon(C)-4$, a contradiction.

\smallskip
If instead $b=2$, we have $d=6$ and $s=8$. 
We claim that there exist $x,y\in \{q_1,\dots,q_8\}$ such that $x+y\leq P_i$ for only one $i\in\{1,\dots,d\}$.
To see this fact, consider the point $q_1$ and, without loss of generality, suppose that $q_1$ belongs to the support of $P_1,P_2,P_3$ and $\Supp(P_1+P_2+P_3)=\left\{q_1,\dots,q_r\right\}$ for some $5\leq r\leq 8$.
If there exists $j\in \{2,\dots,r\}$ such that $q_j$ lies on the support of only one point among $P_1,P_2,P_3$, then $q_1+q_j\leq P_i$ for only one $i\in\{1,2,3\}$, and we may set $(x,y)=(q_1,q_j)$.
If instead any $q_j$ belongs to the support of two points among $P_1,P_2,P_3$, then it is easy to check that $\Supp(P_1+P_2+P_3)=\left\{q_1,\dots,q_5\right\}$, because the divisor $P:=P_1+P_2+P_3$ has degree 12, where $\mult_{p_1}P=3$ and $\mult_{p_j}P\geq 2$ for any $j=2,\dots, r$.
Then we consider the point $q_6$ and, without loss of generality, we suppose that $q_6\in \Supp(P_4)$.
As $s=8$, $\Supp(P_4)$ contains at least an element of $\left\{q_2,\dots,q_5\right\}$, say $q_2$. 
Since $q_2$ belongs also to the support of two points among $P_1,P_2,P_3$, we conclude that $q_2+q_6\leq P_i$ only for $i=4$, and we may set $(x,y)=(q_2,q_6)$.

So, let $x,y\in \{q_1,\dots,q_8\}$ as above.
Therefore $A:=\{x,y\}\subset\{q_1,\dots,q_8\}$ intersects all the sets $\Supp(P_1),\dots,\Supp(P_6)$ but one, say all but $\Supp(P_1)$, with $P_1:=q_1+\dots+q_4$.
Let $\ell\subset \mathbb{P}^{g-1}$ be the line passing through $x,y$ and consider the $3$-planes $\Lambda_1,\dots,\Lambda_6\subset \mathbb{P}^{g-1}$ defined in \eqref{eq:Lambda_i}, which are $\SP(g-5)$.
Since $\ell$ intersects $\Lambda_2,\dots,\Lambda_6$, we have that $\ell$ meets $\Lambda_1$ also.
Thus $x,y,q_1,\dots,q_4\in \Span(\ell,\Lambda_1)$ and $3\leq \dim \Span(\ell,\Lambda_1)\leq 4$, so that $|x+y+q_1+\dots+q_4|$ is a $\mathfrak{g}^r_6$ on $C$, with $1\leq r\leq 2$.
Since $g\geq k+4= 8$, \eqref{eq:mumford} gives $\dim W^r_6\leq 4-2r$.  
Thus we get a contradiction, since the locus of divisors $x+y+q_1+\dots+q_4$ as above has dimension at most 3, but $P_1=q_1+\dots+q_4$ is a general point of $C^{(4)}$.

In conclusion, case (ix) does not occur.
\end{proof}
\end{lemma}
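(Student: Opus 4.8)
The plan is to combine the special--position rigidity of the $3$-planes $\Lambda_i$ with the incidence combinatorics of the points $q_j$. In configuration (ix) the divisor is $D=3(q_1+\dots+q_s)$ with $d=3b$ and $s=4b$, so each $q_j\in N_3$ lies in the support of exactly three of the divisors $P_1,\dots,P_d$, and $b\geq 2$ (if $b=1$ then $P_1=P_2=P_3=q_1+\dots+q_4$). I would split off the ``generic'' range $b\geq 3$, where Lemma~\ref{lem:d-k} alone gives a contradiction, from the borderline value $b=2$, where that lemma is exhausted. For $b\geq 3$ the argument mirrors the earlier configurations: fix a point $x$ lying in three supports, say $\Supp(P_1),\Supp(P_2),\Supp(P_3)$, whose union has at most $10$ points; since $s=4b\geq 12$ there is a point $y$ outside it, lying in three further supports. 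Padding $\{x,y\}$ with at most $d-7$ points yields a set $A$ meeting every $\Supp(P_i)$ but one, with $|A|\leq d-5<\gon(C)-4$, contradicting Lemma~\ref{lem:d-k}.

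The crux is the case $b=2$, i.e. $d=6$ and $s=8$; here $\gon(C)-k=2$, so a two--point set already satisfies Lemma~\ref{lem:d-k} and counting alone fails. My first step is to produce two points $x,y\in\{q_1,\dots,q_8\}$ lying together in \emph{exactly one} divisor $P_i$. I would fix $q_1$, lying in $P_1,P_2,P_3$, and study $\Supp(P_1+P_2+P_3)=\{q_1,\dots,q_r\}$ with $5\leq r\leq 8$: either some $q_j$ meets only one of $P_1,P_2,P_3$, in which case $(x,y)=(q_1,q_j)$ works (as $q_1$ lies nowhere else), or a degree count on the degree--$12$ divisor $P_1+P_2+P_3$, using $\mult_{q_1}=3$ and $\mult_{q_j}\geq 2$, forces $r=5$; then a point $q_6$ common to $P_4,P_5,P_6$ together with a point of $\{q_2,\dots,q_5\}\cap\Supp(P_4)$ gives a pair sharing only $P_4$. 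Since each of $x,y$ lies in exactly three of the six divisors and they share only one, $\{x,y\}$ meets precisely five supports, missing one, say $\Supp(P_1)$ with $P_1=q_1+q_2+q_3+q_4$.

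I would then work on the canonical model and set $\ell:=\Span(x,y)$. As $\ell$ meets $\Lambda_2,\dots,\Lambda_6$ and these $3$-planes are $\SP(g-5)$, $\ell$ must meet $\Lambda_1$ too: if not, one could extend $\ell$ to a $(g-5)$-plane $L$ disjoint from $\Lambda_1$ but still meeting $\Lambda_2,\dots,\Lambda_6$ through $\ell$, violating Definition~\ref{def:SP}. Hence $\dim\Span(\ell,\Lambda_1)\leq 4$, and this span contains the six points $x,y,q_1,q_2,q_3,q_4$. By the geometric Riemann--Roch theorem, $|x+y+q_1+q_2+q_3+q_4|$ is a $\mathfrak{g}^r_6$ with $1\leq r\leq 2$. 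Since $g\geq 8$ and $C$ is non-trigonal (because $\gon(C)=6$), non-bielliptic and not a plane quintic, the bound \eqref{eq:mumford} of Theorem~\ref{thm:mm} gives $\dim W^r_6\leq 4-2r$, so such divisors sweep out a family of dimension at most $\dim W^r_6+r\leq 4-r\leq 3$. This contradicts the fact that $q_1+q_2+q_3+q_4=P_1$ is a general point of $C^{(4)}$ and therefore moves in four dimensions, ruling out $b=2$.

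The main obstacle is exactly this borderline value $b=2$. Once the combinatorial slack in Lemma~\ref{lem:d-k} vanishes, the proof must convert the incidence pattern into a genuinely small linear span via $\SP(g-5)$ and then play that span against a Brill--Noether estimate. The two delicate points are extracting a finite--incidence pair $(x,y)$ sharing a single divisor, and using the special--position hypothesis to force $\ell$ to meet $\Lambda_1$ so that $\Span(\ell,\Lambda_1)$ is only $4$-dimensional; everything else is bookkeeping and the application of Mumford's refinement of Martens' theorem.
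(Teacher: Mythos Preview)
Your proposal is correct and follows essentially the same approach as the paper: the split into $b\geq 3$ (handled by Lemma~\ref{lem:d-k}) versus $b=2$ (handled by the $\SP(g-5)$ property plus Mumford's bound), the combinatorial extraction of the pair $(x,y)$ sharing exactly one $P_i$, and the resulting $\mathfrak{g}^r_6$ contradiction are all identical. If anything, you supply a few details the paper leaves implicit, such as the justification that $\ell$ must meet $\Lambda_1$ via extension to a $(g-5)$-plane, and the observation that in the $r=5$ subcase the points $q_6,q_7,q_8$ necessarily lie in all of $P_4,P_5,P_6$.
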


\begin{lemma}\label{lem:x}
Configuration (x) of Case B.1---that is  $k=4$, $|N_2|=2d$, and $N_1=N_3=N_4=\emptyset$---does not occur.
\begin{proof}
The divisor $D$ has the form $D=2(q_1+\dots+q_{2d})$ and $|q_1+\dots+q_{2d}|$ is a $\mathfrak{g}^1_{2d}$.
We may assume that  $P_1:=q_1+\dots+q_4$. 
Of course $d\neq 2$, otherwise we would have $P_1=P_2$.

Let $\Lambda_1,\dots,\Lambda_d\subset \mathbb{P}^{g-1}$ be the $3$-planes defined in \eqref{eq:Lambda_i}.
We point out that for any $i=1,\dots,d$, 
\begin{equation}\label{eq:only2}
\Lambda_i\cap \{p_1,\dots,p_{2d}\}=\Supp P_i.
\end{equation}
Otherwise we would have a complete $\mathfrak{g}^1_5$, with $\dim W^1_5\leq 1$ by \eqref{eq:mumford}. 
Since $P_i\in C^{(4)}$ is a general point, we would obtain a contradiction, as the locus of divisors defining a $\mathfrak{g}^1_5$ would have dimension at most $2$.

\smallskip
If $d=3$, then $|q_1+\dots+q_{6}|$ is a complete $\mathfrak{g}^r_6$ with $1\leq r\leq 2$, and $\eqref{eq:mumford}$ ensures that $\dim W^r_6\leq 4-2r$.
Therefore, the locus of divisors defining a $\mathfrak{g}^r_6$ has dimension at most $4-r\leq 3$, but $P_1\in C^{(4)}$ is a general point, so we get a contradiction.

\smallskip
If $d=4$, we may assume $q_1\in \Supp(P_1)\cap\Supp(P_2)$ and we consider the projection $\pi_1\colon \mathbb{P}^{g-1}\dashrightarrow \bP^{g-2}$ from $q_1$.
By Lemma \ref{lem:sottoSP}, the $3$-planes $\Gamma_3:=\pi_1(\Lambda_3)$ and  $\Gamma_4:=\pi_1(\Lambda_4)$ are in special position with respect to $(g-6)$-planes. 
Hence $\Gamma_3=\Gamma_4\cong \mathbb{P}^3$, and $\overline{\pi_1^{-1}(\Gamma_3)}=\Span(q_1,\Lambda_3,\Lambda_4)\cong \mathbb{P}^4$ contains at least $6$ points of $C$, so that we obtain a $\mathfrak{g}^1_6$.
Then we argue as in case $d=3$ with respect to the general point $P_3\in C^{(4)}$, and we obtain a contradiction. 

\smallskip
Let $d=5$, and suppose that $q_1\in \Supp(P_1)\cap\Supp(P_2)$. 
Then $\Supp(P_1)\cap\Supp(P_2)$ consists of at most $7$ distinct points.
As $s=2d= 10$, there exists a point $q_j\not\in \Supp(P_1)\cup\Supp(P_2)$, and we may set $q_j\in \Supp(P_3)\cap\Supp(P_4)$.
Since the $3$-planes $\Lambda_1,\dots,\Lambda_5$ are $\SP(g-5)$, the line $\ell:=\Span(q_1,q_j)$ intersects $\Lambda_5$.
Thus $\dim \Span(q_1,q_j,\Lambda_5)=4$, so that $|q_1+q_j+P_5|$ is a $\mathfrak{g}^1_6$, a contradiction.

\smallskip
We handle the case $d\geq 11$ by using Lemma \ref{lem:d-k}. 
Namely, given a point $x\in \{q_1,\dots,q_{2d}\}\subset C$, it belongs to the support of two points of $C^{(4)}$, say $P_1$ and $P_2$. 
Then $\Supp(P_1)\cup\Supp(P_2)$ consists of at most $7$ distinct points. 
As $s=2d\geq 22$, there exists a point $y\in \{q_1,\dots,q_{2d}\}\smallsetminus\{x\}$ belonging to the support of two other points of $C^{(4)}$, say $P_3$ and $P_4$.
By iterating this process twice, we obtain four distinct points $x,y,z,w\in \{q_1,\dots,q_{2d}\}$ such that $\{x,y,z,w\}\cap \Supp(P_i)$ for any $i=1,\dots,8$.
So we can choose $n\leq d-9$ distinct points $a_1,\dots,a_n\in \{q_1,\dots,q_{2d}\}\smallsetminus\{x,y,z,w\}$, such that the set $A:=\left\{x,y,z,w,a_1,\dots,a_n\right\}$ contains a point in the support of $P_i$ for any $i=1,\dots,d-1$ and $A\cap\Supp(P_d)=\emptyset$, but this is impossible as $|A|\leq d-5<\gon(C)-4$.

\smallskip
In order to discuss the remaining cases $6\leq d\leq 10$, we note that since the points $P_i\in C^{(4)}$ must be indistinguishable and any $q_j$ appears in the support of exactly two points $P_i$, we necessarily have one of the following configurations:
\begin{enumerate}
\item[(a)] for any $i\in\{1,\dots,d\}$, there exist $j\in\{1,\dots,d\}\backslash\{i\}$ such that $\left|\Supp(P_i)\cap\Supp(P_j)\right|=1$; 
\item[(b)] for any two distinct indexes $i,j\in\{1,\dots,d\}$, either $\Supp(P_i)\cap\Supp(P_j)=\emptyset$ or $\left|\Supp(P_i)\cap\Supp(P_j)\right|=2$.
\end{enumerate}
Indeed, suppose that case (b) does not occur.
Since the points $P_i$ are indistinguishable, this means that for any $i\in\{1,\dots,d\}$, there exists $j\neq i$ such that either $\left|\Supp(P_i)\cap\Supp(P_j)\right|=1$, as in case (a), or $|\Supp(P_i)\cap\Supp(P_j)|=3$.
In the latter case, let $q\in \Supp(P_i)\smallsetminus \Supp(P_j)$. 
Since $q\in N_2$, there exists $l\neq i$ such that $\Supp(P_i)\cap \Supp(P_l)=\{q\}$, as in configuration (a).

\medskip
Now we assume that $d=6$.
We have to study the following cases:
\begin{itemize}
\item[I.] The points $P_i$ satisfy configuration (b).
Up to reordering indexes, the only admissible configurations for the points $P_i\in C^{(4)}$ are:
\begin{equation}\label{conf_points_1}
\begin{array}{lll}
 P_1=q_1+q_2+q_3+q_4, & P_2=q_3+q_4+q_5+q_{6}, & P_3=q_5+q_6+q_{7}+q_{8},\\
P_4=q_7+q_8+q_9+q_{10}, & P_5=q_9+q_{10}+q_{11}+q_{12}, & P_6=q_{11}+q_{12}+q_{1}+q_{2}.
\end{array}
\end{equation}
or
\begin{equation}\label{conf_points_2}
\begin{array}{lll}
P_1=q_1+q_2+q_3+q_4, & P_2=q_3+q_4+q_5+q_{6}, & P_3=q_5+q_6+q_{1}+q_{2},\\
P_4=q_7+q_8+q_9+q_{10}, & P_5=q_9+q_{10}+q_{11}+q_{12}, & P_6=q_{11}+q_{12}+q_{7}+q_{8}.
\end{array}
\end{equation}

\item[II.] The points $P_i$  satisfy configuration (a) and there exist three points among $q_1,\dots,q_{12}$ in the support of two distinct points $P_i$, say 
$q_2+q_3+q_4\leq P_1,P_2$.
Since the points $P_i$ are indistinguishable, up to reordering indexes, the only admissible configuration for the points $P_i\in C^{(4)}$ is
\begin{equation}\label{conf_points_3}
\begin{array}{lll}
P_1=q_1+q_2+q_3+q_4, & P_2=q_5+q_6+q_{7}+q_{8}, &  P_5=q_9+q_{10}+q_{11}+q_{12},\\
P_4=q_{5}+q_2+q_3+q_4,  &P_5=q_9+q_6+q_7+q_8, & P_6=q_1+q_{10}+q_{11}+q_{12}.
\end{array}
\end{equation}

\item[III.] The points $P_i$  satisfy configuration (a) and for any distinct $i,j\in\{1,\dots,6\}$ either $\Supp(P_i)\cap\Supp(P_j)=\emptyset$ or $\left|\Supp(P_i)\cap\Supp(P_j)\right|=1$.

\item[IV.] The points $P_i$  satisfy configuration (a) and for any $i\in\{1,\dots,6\}$ there exists $k\in\{1,\dots,6\}\backslash\{i\}$ such that $|\Supp(P_i)\cap\Supp(P_k)|=2$.
\end{itemize}

Concerning the first three cases, the following holds.

\begin{claim}\label{touch4Pi}
In cases I,II and III there exist two distinct points $x,y\in\{q_1,\dots,q_{12}\}$ such that 
\begin{itemize}
  \item[-] $\{x,y\}\cap\Supp(P_k)\neq\emptyset$ for exactly four indexes $k\in\{1,\dots,6\}$;
  \item[-] if $i,j\in\{1,\dots,6\}$ are other two indexes (i.e. $\{x,y\}\cap\Supp(P_i)=\emptyset$ and $\{x,y\}\cap\Supp(P_j)=\emptyset$), then $\Supp(P_i)\cap\Supp(P_j)=\emptyset$.
\end{itemize}

\begin{proof} If the points $P_1,\dots,P_6$  satisfy \eqref{conf_points_1}, \eqref{conf_points_2} or \eqref{conf_points_3}, it suffices to set $x=q_1$ and $y=q_7$.
In~case III, for any $i\in\{1,\dots,6\}$, there exists a unique $j\in\{1,\dots,6\}\smallsetminus\{i\}$ such that $\Supp(P_i)\cap\Supp(P_j)=\emptyset$ and $\left|\Supp(P_i)\cap\Supp(P_k)\right|=1$ for any $k\neq i,j$.
Up to reordering indexes, we may assume $\Supp(P_1)\cap\Supp(P_2)=\Supp(P_3)\cap\Supp(P_4)=\Supp(P_5)\cap\Supp(P_6)=\emptyset$.
By taking $x\in \Supp(P_1)\cap\Supp(P_3)$ and $y\in \Supp(P_2)\cap\Supp(P_4)$, the set $\{x,y\}$ satisfies the assertion of the claim.
\end{proof}
\end{claim}

Using notation as in the claim, let us set $\ell:=\Span(x,y)$.
We note that $\ell$ does intersect neither $\Lambda_i$ nor $\Lambda_j$.
Indeed, if $\ell$ intersected $\Lambda_i$, the space $H:=\Span(\ell,\Lambda_i)\cong \bP^4$ would contain six points among $q_1,\dots,q_{12}$.
Thus $|x+y+P_i|$ would be a $\mathfrak{g}^1_{6}$ on $C$ and, by arguing as above, we would get a contradiction as $P_i$ is a general point.

Then we consider the projection $\pi\colon \mathbb{P}^{g-1}\dashrightarrow \bP^{g-3}$ from $\ell$.
By Lemma \ref{lem:sottoSP}, the $3$-planes $\Gamma_i:=\pi(\Lambda_i),\Gamma_j:=\pi(\Lambda_j)$ are in special position with respect to the $(g-7)$-planes.
Hence $\Gamma_i=\Gamma_j\cong \mathbb{P}^3$, and $H:=\overline{\pi^{-1}(\Gamma_i)}=\Span(x,y,\Lambda_i,\Lambda_j)\cong \mathbb{P}^5$.
Moreover, $H$ contains ten points of $C$ as $\Supp(P_i)\cap\Supp(P_j)=\emptyset$.
Hence the divisor $L:=x+y+P_i+P_j$ gives a $\mathfrak{g}^{4}_{10}$ on $C$.

If $|L|$ is very ample, then Castelnuovo's bound implies $g\leq 9$.
On the other hand, as $6=\gon(C)\leq \left\lfloor\frac{g+3}{2}\right\rfloor$, we conclude that $g= 9$.
Therefore, $C$ maps isomorphically to an \emph{extremal} curve of degree $10$ in $\mathbb{P}^4$.
Thus \cite[Theorem III.2.5 and Corollary III.2.6]{ACGH} ensure that $C$ possesses a $\mathfrak{g}^{1}_{3}$, but this contradicts the assumption $\gon(C)=6$.

Therefore $|L|$ is not very ample and then there exist two points $p_1,p_2\in C$, such that either $|L-p_1|$ is a  $\mathfrak{g}^{4}_{9}$ or $|L-p_1-p_2|$ is a $\mathfrak{g}^{3}_{8}$.
Since $\gon(C)=6$, both these linear series satisfy the assumption of Lemma \ref{lem:very ample}, and we obtain a contradiction.

\smallskip
It remains to analyze case IV. 
In this setting, for any $i\in\{1,\dots,6\}$, there exists a unique $k\in\{1,\dots,6\}\smallsetminus\{i\}$ such that $\left|\Supp(P_i)\cap\Supp(P_k)\right|=2$.

\begin{claim}\label{claim:caseIV}
In case IV, let $P_i,P_j, P_k$ three distinct points such that $|\Supp(P_i)\cap\Supp(P_k)|=2$ and $\Supp(P_i)\cap\Supp(P_j)\neq \emptyset$. 
Then $\Supp(P_j)\cap\Supp(P_k)= \emptyset$.

\begin{proof}
Up to reordering indexes, we may assume that $|\Supp(P_1)\cap\Supp(P_2)|=|\Supp(P_3)\cap\Supp(P_4)|=|\Supp(P_5)\cap\Supp(P_6)|=2$.
Suppose by contradiction that $\Supp (P_3)$ intersects both $\Supp (P_1)$ and $\Supp (P_2)$.
Then we may set $P_1=q_1+q_2+q_3+q_4$, $P_2=q_1+q_2+q_5+q_6$ and $P_3=q_3+q_5+q_7+q_8$, where $q_7,q_8\in \Supp (P_4)$.

Since the points $P_i$ are indistinguishable, there exists a point $P_t$, whose support intersects both $\Supp (P_3)$ and $\Supp (P_4)$.
Looking at $\Supp (P_3)$, we see that $t=1$ or $2$. 
We may assume $t=1$, so that $q_4\in P_4$.

Finally,  there exists a point $P_h$, with $h=1,\dots,4$, whose support intersects both $\Supp (P_5)$ and $\Supp (P_6)$ at two distinct points.
However, this is impossible because by construction each set $\Supp (P_h)$ may share at most one element with $\Supp (P_5)\cup\Supp (P_6)$.
\end{proof}
\end{claim}

Now, let $P_1=q_1+q_2+q_3+q_4$, with $q_1,q_2\in \Supp(P_2)$, $q_3\in\Supp(P_3)$ and $q_4\in\Supp(P_4)$.
By Claim \ref{claim:caseIV} we have $\Supp(P_2)\cap\Supp(P_3)=\emptyset$, $\Supp(P_2)\cap\Supp(P_4)=\emptyset$ and $|\Supp(P_3)\cap\Supp(P_4)|\neq 2$.
So let $P_5$ and $P_6$ such that  $|\Supp(P_3)\cap\Supp(P_5)|= 2$ and  $|\Supp(P_4)\cap\Supp(P_6)|=2$.
We necessarily have $\Supp(P_2)\cap\Supp(P_5)\neq\emptyset$ and $\Supp(P_2)\cap\Supp(P_6)\neq\emptyset$, so we can suppose $P_2=q_1+q_2+q_5+q_6$, $q_5\in\Supp(P_5)$ and $q_6\in\Supp(P_6)$.

If $\Supp(P_3)\cap\Supp(P_4)\neq \emptyset$, say $q_7\in\Supp(P_3)\cap\Supp(P_4)$, then the set $\{x,y\}=\{q_6,q_7\}$ satisfies the assertion of Claim \ref{touch4Pi} (note that $\Supp(P_1)\cap\Supp(P_5)=\emptyset$). Hence we can argue as for cases I--III above, and we obtain a contradiction.

If instead $\Supp(P_3)\cap\Supp(P_4)= \emptyset$, we can suppose $\Supp(P_3)\cap\Supp(P_6)=\{q_7\}$ and $\Supp(P_4)\cap\Supp(P_5)=\{q_8\}$.
Let $\ell_1:=\Span(q_1,q_7)$ and $\ell_2:=\Span(q_2,q_7)$.
Both $\{q_1,q_7\}$ and $\{q_2,q_7\}$ have empty intersection only with the support of $P_4$ and $P_5$.
Therefore, by considering the projections from $\ell_1$ and $\ell_2$, and arguing as in cases I--III above, we obtain two linear spaces $H_1:=\Span(q_1,q_7,\Lambda_4,\Lambda_5)\cong \mathbb{P}^5$ and $H_2:=\Span(q_2,q_7,\Lambda_4,\Lambda_5)\cong \mathbb{P}^5$.
Since $|\Supp(P_4)\cup\Supp(P_5)|=7$, each of the spaces $H_1$ and $H_2$ contains at least $9$ points among $q_1,\dots,q_{12}$, $8$ of which---i.e. the point $q_7$ and the elements of $\Supp(P_4)\cup\Supp(P_5)$---lie in $H_1\cap H_2$.
Therefore, if $H_1=H_2$, then $H_1\cong \mathbb{P}^5$ contains $10$ points of $C$ and we obtain a $\mathfrak{g}^{4}_{10}$, but we saw that $C$ does not admit such linear series.
If instead $H_1\neq H_2$, then $H_1\cap H_2\cong \bP^{4}$ contains $8$ points of $C$, which define a  $\mathfrak{g}^{3}_{8}$ on $C$.
Since $\gon(C)=6$, this linear series satisfies the assumption of Lemma \ref{lem:very ample} and we get a contradiction.
This concludes the analysis of case $d=6$.

\medskip
Now we assume $d\in\{7,8,9,10\}$ and we set $L:=q_1+\dots+q_{2d}$.

\begin{claim}\label{claim:span_cases_less10} If $d\in\{7,8,9,10\}$, then $\dim\Span(\Lambda_1,\dots,\Lambda_d)=d$. \end{claim}

\begin{proof}
Since  $\deg L=2d$ and $d=\gon(C)\leq \left\lfloor\frac{g+3}{2}\right\rfloor$, we deduce $\dim |L|\leq d-1$ by Clifford's theorem.
Then the geometric version of the Riemann--Roch theorem yields 
$$\dim\Span(\Lambda_1,\dots,\Lambda_d)=\dim\Span(q_1+\dots+2d)=2d-1-\dim |L|\geq d.$$ 
In order to show that $\dim\Span(\Lambda_1,\dots,\Lambda_d)\leq d$, we assume $q_1\in\Supp(P_1)\cap\Supp(P_2)$ and we consider the projection $\pi_1\colon \mathbb{P}^{g-1}\dashrightarrow \bP^{g-2}$ from $q_1$.
Thanks to \eqref{eq:only2} we have that $p_1\not\in \Lambda_j$ for any $j=3,\dots,d$. 
Hence the $3$-planes $\Gamma_3:=\pi_1(\Lambda_3),\dots,\Gamma_d:=\pi_1(\Lambda_d)$ are in special position with respect to $(g-6)$-planes by Lemma \ref{lem:sottoSP}.

\smallskip
Suppose that the sequence $\Gamma_3,\dots,\Gamma_d$ is indecomposable in the sense of Definition \ref{def:partition}, then $\dim\Span(\Gamma_3,\dots,\Gamma_d)\leq{d-1}$ by Theorem \ref{thm:dimSP}, and the linear space $H_1:=\Span(q_1,\Lambda_3,\dots,\Lambda_d)$ satisfies $\dim H_1\leq d$.

If $P_1,\dots,P_d$ satisfy configuration (a), we can suppose $\Supp(P_1)\cap\Supp(P_2)=\{q_1\}$, so that $\Supp(P_3)\cup\dots\cup\Supp(P_d)=\{q_2,\dots,q_{2d}\}$. 
Thus $H_1=\Span(\Lambda_1,\dots,\Lambda_d)$, and the assertion follows.

If instead $P_1,\dots,P_d$ satisfy configuration (b), we can assume $\Supp(P_1)\cap\Supp(P_2)=\{q_1,q_2\}$. 
Therefore, $\Supp(P_3)\cup\dots\cup\Supp(P_d)=\{q_3,\dots,q_{2d}\}$ and $H_1$ contains $\{q_1,q_3,\dots,q_{2d}\}$.
We claim that $q_2\in H_1$ too, so that $H_1=\Span(\Lambda_1,\dots,\Lambda_d)$, and the assertion follows.
To see this fact, we assume by contradiction that $q_2\not\in H_1$, and we consider the projection $\pi_2\colon \mathbb{P}^{g-1}\dashrightarrow \bP^{g-2}$ from $q_2$.
By arguing as above, we deduce that the linear space $H_2:=\Span(q_2,\Lambda_3,\dots,\Lambda_d)$ has dimension $\dim H_2\leq d$ and contains $\{q_2,\dots,q_d\}$.
Hence $H_1\cap H_2$ has dimension at most $d-1$ and contains the points $q_3,\dots,q_{2d}$.
Then the Riemann--Roch and Clifford's theorems imply that $|q_3+\dots+q_{2d}|$ is a $\mathfrak{g}^{r}_{2d-3}$, with $r\geq d-2$.
Since $\gon(C)=d$, this linear series satisfies the assumption of Lemma \ref{lem:very ample}, and we get a contradiction as $r\geq d-2\geq 5> 2$.
Therefore $q_2\in H_1$, as claimed.

\smallskip
Finally, we suppose that the sequence $\Gamma_3,\dots,\Gamma_d$ is decomposable in the sense of Definition \ref{def:partition}, and we show that this situation does not occur.

Suppose that there exists a part $\{\Gamma_3,\Gamma_4\}$ consisting of $3$-planes satisfying $\SP(g-6)$, so that $\Gamma_3=\Gamma_4\cong \mathbb{P}^3$. 
We note further that this is always the case when $d=7$, because the number of $\Gamma_i$ is $d-2=5$.
As $\left|\Supp(P_3)\cup\Supp(P_4)\right|\geq 5$, the space $H:=\Span(q_1,\Lambda_3,\Lambda_4)\cong\bP^4$ contains at least six points of $C$ and has dimension 4.
This gives a $\mathfrak{g}^{1}_{6}$ on $C$, which does not exist as $\gon(C)=d\geq 7$.

So we assume that $d\geq 8$ and we suppose that there exists a part $\{\Gamma_3,\Gamma_4,\Gamma_5\}$ consisting of $3$-planes that satisfy $\SP(g-6)$, which is always the case when $d=8,9$.
By Theorem \ref{thm:dimSP}, their linear span has dimension at most $4$.
Moreover, $\left|\Supp(P_3)\cup\Supp(P_4)\cup\Supp(P_5)\right|\geq 6$ since $q_j\in N_2$ for any $j=1,\dots, 2d$.
Thus $H:=\Span(q_1,\Lambda_3,\Lambda_4,\Lambda_5)$ has dimension at most 5 and contains 7 points of $C$.
Then $C$ should admit a $\mathfrak{g}^{1}_{7}$, contradicting the assumption $\gon(C)=d\geq 8$.

To conclude, assume $d=10$ and that there are no parts of cardinality 2 and 3.
Then there exists a part $\{\Gamma_3,\dots, \Gamma_6\}$ consisting of $3$-planes that satisfy $\SP(g-6)$.
By arguing as above, $H:=\Span(q_1,\Lambda_3,\dots,\Lambda_6)$ has dimension at most $6$ and $\left|\Supp(P_3)\cup\dots\cup\Supp(P_5)\right|\geq 8$. 
Therefore we obtain a $\mathfrak{g}^{r}_{9}$ on $C$, with $r\geq 2$, which is impossible as $d=10$.
\end{proof}

By Claim \ref{claim:span_cases_less10} and the geometric version of the Riemann--Roch theorem, we deduce that $|L|$ is a $\mathfrak{g}^{d-1}_{2d}$ on $C$.
In order to conclude case (x), we analyze separately the following three cases: (x.a) $L$ is very ample, (x.b) there exists $p\in C$ such that $\dim|L-p|=d-1$, (x.c) there exist $p,q\in C$ such that $\dim|L-p|=\dim|L-p-q|=d-2$.

\smallskip
\hspace{1cm}(x.a) 
Assume that $|L|$ is very ample.
Then Castelnuovo's bound implies $g\leq d+4$.
On the other hand, as $d=\gon(C)\leq \left\lfloor\frac{g+3}{2}\right\rfloor$, we have $2d-3\leq g\leq d+4$, which is impossible for $d\geq 8$.
For $d=7$, we necessarily have $g=11$, and $|L|$ embeds $C$ in $\mathbb{P}^6$ as an \emph{extremal} curve of degree $14$.
Thus \cite[Theorem III.2.5 and Corollary III.2.6]{ACGH} ensure that $C$ possesses a $\mathfrak{g}^{1}_{4}$, but this contradicts the assumption $\gon(C)=d=7$. 

\smallskip
\hspace{1cm}(x.b) 
In this case there exists $p\in C$ such that $|L-p|$ is a $\mathfrak{g}^{d-1}_{2d-1}$.
If $|L-p|$ is very ample, then Castelnuovo's bound and $d\leq \left\lfloor\frac{g+3}{2}\right\rfloor$ yield $2d-3\leq g\leq d+2$, which is impossible for any $d\geq 6$.
Therefore $|L-p|$ is not very ample and then there exist two points $q_1,q_2\in C$, such that either $|L-p-q_1|$ is a  $\mathfrak{g}^{d-1}_{2d-2}$ or $|L-p_1-q_1-q_2|$ is a $\mathfrak{g}^{d-2}_{2d-3}$.
Since $\gon(C)=d$, both these linear series satisfy the assumption of Lemma \ref{lem:very ample}.
Hence we obtain a contradiction as their dimension is larger than $2$.

\smallskip
\hspace{1cm}(x.c) 
Assume that there exist $p,q\in C$ such that $|L-p-q|$ is a $\mathfrak{g}^{d-2}_{2d-2}$.
If $|L-p-q|$ is very ample, we argue as above, and we obtain $2d-3\leq g\leq d+3$, which is impossible for any $d\geq 7$.
Therefore $|L-p-q|$ is not very ample and there exist two points $w_1,w_2\in C$, such that either $|L-p-q-w_1|$ is a  $\mathfrak{g}^{d-2}_{2d-3}$ or $|L-p-q-w_1-w_2|$ is a $\mathfrak{g}^{d-3}_{2d-4}$.
Since $\gon(C)=d$, both these linear series satisfy the assumption of Lemma \ref{lem:very ample}, and we get a contradiction as their dimension is larger than $2$.

In conclusion, case (x) is not possible. 
\end{proof}
\end{lemma}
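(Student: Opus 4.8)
The plan is to derive a contradiction by showing that the $3$-planes $\Lambda_i=\Span(\Supp(P_i))$ attached to the points $P_1,\dots,P_d$ of \eqref{eq:P_i} cut out on $C$ linear series forbidden by Brill--Noether theory. In configuration (x) the divisor is $D=2(q_1+\dots+q_{2d})$ with the $q_j$ distinct, the series $|q_1+\dots+q_{2d}|$ is a $\mathfrak{g}^1_{2d}$ by the hypothesis of Case B.1, and each $q_j$ lies in the support of exactly two of the $P_i$ (since each $P_i$ is reduced). First I would record the structural constraint that no $\Lambda_i$ contains a \emph{fifth} point of $\{q_1,\dots,q_{2d}\}$: otherwise five of the $q_j$ would span at most a $3$-plane and define a $\mathfrak{g}^1_5$, but Mumford's inequality \eqref{eq:mumford} confines such divisors to a family of dimension at most $2$, contradicting the generality of $P_i\in C^{(4)}$. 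The same generality applied to the span of the $q_j$ inside one or two of the $\Lambda_i$ disposes of $d=2$ (which forces $P_1=P_2$) and of $d=3,4,5$, where a short span computation always yields a $\mathfrak{g}^r_6$ or $\mathfrak{g}^1_6$ violating \eqref{eq:mumford} or \eqref{eq:martens} against the generality of a general $P_i$.

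For large $d$ I would invoke the combinatorial Lemma \ref{lem:d-k}: any set $A\subset\{q_1,\dots,q_{2d}\}$ meeting every $\Supp(P_i)$ but one satisfies $|A|\geq\gon(C)-4=d-4$. Since each $q_j$ lies in only two supports, a pair of points covers at most four supports, so a greedy construction gives such a set of size at most $d-5$ as soon as $d\geq 11$: four suitably chosen points cover eight distinct supports, and at most $d-9$ further single points cover the rest. The resulting bound $|A|\leq d-5<d-4$ contradicts Lemma \ref{lem:d-k} and eliminates every $d\geq 11$.

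The hard part will be the intermediate range $6\leq d\leq 10$, where the greedy set is too small and one must use the projective geometry directly. For $d=6$ I would split the incidence pattern of the supports into finitely many types according to whether some pair of supports meets in one point or all pairwise intersections are empty or of size two, and in the generic types locate two points $x,y$ whose line $\ell$ meets exactly four of the six $\Lambda_i$, leaving two \emph{disjoint} supports $P_i,P_j$. Projecting from $\ell$ and applying Lemma \ref{lem:sottoSP} forces the images to coincide, so that $\Span(x,y,\Lambda_i,\Lambda_j)\cong\mathbb{P}^5$ carries a $\mathfrak{g}^4_{10}$; I would then rule this series out, showing that if it is very ample Castelnuovo's bound forces $g=9$ and makes $C$ extremal, hence possessing a $\mathfrak{g}^1_3$ by \cite{ACGH} against $\gon(C)=6$, while if it is not very ample one peels off points to reach a $\mathfrak{g}^4_9$ or $\mathfrak{g}^3_8$ of dimension $>2$, which Lemma \ref{lem:very ample} forbids. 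I expect the genuine bottleneck to be the subcase in which every support meets exactly one other in two points, where the clean ``one line, one $\mathbb{P}^5$'' mechanism fails: there I would need a separate incidence claim to either recover the favourable pair $(x,y)$ or to build two distinct $\mathbb{P}^5$'s whose intersection again yields the forbidden $\mathfrak{g}^3_8$ or $\mathfrak{g}^4_{10}$.

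For $7\leq d\leq 10$ I would instead prove that $\dim\Span(\Lambda_1,\dots,\Lambda_d)=d$, so that $|L|:=|q_1+\dots+q_{2d}|$ is a complete $\mathfrak{g}^{d-1}_{2d}$. The lower bound follows from Riemann--Roch and Clifford; for the upper bound I project from a shared point $q_1$, note that the projected $3$-planes satisfy $\SP(g-6)$ by Lemma \ref{lem:sottoSP}, and check that this projected sequence is indecomposable---any decomposable block of size $2$, $3$, or $4$ would, by Theorem \ref{thm:dimSP}, span too small a space and produce a $\mathfrak{g}^1_6$, a $\mathfrak{g}^1_7$, or a $\mathfrak{g}^r_9$ incompatible with $\gon(C)=d$. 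With $|L|$ identified, I would finish by a trichotomy on whether $L$, $L-p$, or $L-p-q$ is very ample: Castelnuovo's bound together with $d\leq\lfloor(g+3)/2\rfloor$ squeezes $g$ into an empty range in all but the boundary case $(d,g)=(7,11)$, where $C$ would again be extremal with a $\mathfrak{g}^1_4$ contradicting $\gon(C)=7$; and in each non-very-ample alternative, removing one or two base points produces a series of dimension $\geq 3$, once more contradicting Lemma \ref{lem:very ample}.
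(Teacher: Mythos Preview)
Your proposal is correct and follows essentially the same route as the paper: the structural constraint \eqref{eq:only2}, the direct span arguments for $d\leq 5$, the greedy set plus Lemma~\ref{lem:d-k} for $d\geq 11$, the $\mathfrak{g}^4_{10}$/Castelnuovo/extremal-curve argument for $d=6$ with a separate treatment of the ``every support meets exactly one other in two points'' subcase, and the $\dim\Span=d$ claim followed by the very-ampleness trichotomy for $7\leq d\leq 10$ all match the paper's proof. The only detail you skate over is that, after projecting from $q_1$ and bounding $\dim\Span(q_1,\Lambda_3,\dots,\Lambda_d)\leq d$, one still has to check that the missing point $q_2$ (in configuration~(b), where $\Supp(P_1)\cap\Supp(P_2)=\{q_1,q_2\}$) actually lies in this span; the paper does this via a second projection from $q_2$ and an application of Lemma~\ref{lem:very ample}.
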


\begin{remark}\label{rem:plane quintic}
We point out that the hypothesis of Theorem \ref{thm:covfam} can be slightly weakened, replacing the assumption ``$C$ is not a smooth plane quintic'' by ``$C$ is not a smooth plane quintic with an automorphism of order 2''. 
Indeed, we used the fact that $C$ is not a smooth plane quintic only in the proof of Lemma \ref{lem:i}, in order to rule out the case $d=4$.
Therefore, it is enough to show that if $k=2$ and $d=4$, then $C$ admits an involution $\iota\colon C\longrightarrow C$.
In this setting, the divisor in \eqref{eq:divisorDbis} is $D=2(q_1+\dots+q_4)$ and $|q_1+\dots+q_4|$ is a $\mathfrak{g}^1_4$ on $C$.
Up to reordering indexes, the points $P_1,\dots,P_4\in C^{(2)}$ in \eqref{eq:P_i} are given by
\begin{equation*}
P_1=q_1+q_2, \quad P_2=q_2+q_3, \quad P_3=q_3+q_4,\quad P_4=q_4+q_1.
\end{equation*}
It follows that there is an involution on $\{q_1,\ldots,q_4\}$ not depending on the ordering, such that $q_1\mapsto q_3$ and $q_2\mapsto q_4$ (i.e. each $q$ is associated to the unique $q'$ such that $q+q'$ is not one of the points $P_i$).
Since $|q_1+\dots+q_4|$ is a base-point-free $\mathfrak{g}^1_4$, for any $j=1,\dots,4$ we have that the only effective divisor $F\in |q_1+\dots+q_4|$ such that $q_j\in \Supp(F)$ is $F=q_1+\dots+q_4$. 
Therefore, as we vary $(t,y)\in \{t\}\times \mathbb{P}^1$, the involution on $\{q_1,\ldots,q_4\}$ defines an involution $\iota\colon C\longrightarrow C$.

Thus all the curves that we need to exclude in Theorem \ref{thm:covfam}---i.e. hyperelliptic curves, bielliptic curves and plane quintics with an involution---possess some non trivial automorphism.
As a consequence, we obtain an improvement of \cite[Theorem 1.5]{B1}, which asserts that when $k=2$, if $C$ is a curve of genus $g\geq 6$ without non-trivial automorphisms and $\cE\stackrel{\pi}{\longrightarrow}T$ is a family of irreducible curves as in Theorem \ref{thm:covfam}, then the general member $E_t=\pi^{-1}(t)$ is isomorphic to $C$. 
\end{remark}

\begin{proof}[Proof of Corollary \ref{cor:conngon}]
Let $X=C^{(k)}$ and let $\Hilb(X)$ denote the Hilbert scheme of curves on $C^{(k)}$.
Let $\cE\stackrel{\pi}{\longrightarrow}T\subset \Hilb(X)$ be a family of irreducible curves on $C^{(k)}$ computing the connecting gonality of $C^{(k)}$.
In particular, given two general points $x,y\in C^{(k)}$ there exist $t\in T$ such that $x,y\in E_t=\pi^{-1}(t)$ and $\gon(E_t)=\conngon(X)$.
Therefore, the natural map $\cE\longrightarrow X$ (which includes any curve $E_t$ in $X$) defines a dominant map $\cE\times_T\cE\longrightarrow X\times X$.
Thus $\dim (\cE\times_T\cE)\geq\dim (X\times X)$ and, since $\dim (\cE\times_T\cE)=\dim T+2$, we conclude that $\dim T\geq 2k-2$.

By \cite[Theorem 1.1]{BP1}, we deduce that $\gon(E_t)=\conngon(X)\geq \covgon(X)=\gon(C)$, and Theorem \ref{thm:covfam} ensures that equality holds if and only if 
$\cE\stackrel{\pi}{\longrightarrow}T$ is the $(k-1)$-dimensional family parameterizing curves of the form $C_Q=Q+C$, with $Q\in C^{(k-1)}$.
However, as $k-1<2k-2$ for any $k\geq 2$, we conclude that $\gon(E_t)=\conngon(X)> \covgon(X)=\gon(C)$.
\end{proof}


\end{document}